\numberwithin{equation}{section}
\newtheorem{Thm}{Theorem}[section]
\newtheorem{Prop}[Thm]{Proposition}
\newtheorem{Lem}[Thm]{Lemma}
\newtheorem{Cor}[Thm]{Corollary}
\theoremstyle{definition}
\newtheorem{Rem}[Thm]{Remark}
\newtheorem{Def}[Thm]{Definition}
\newtheorem{Assum}[Thm]{Assumption}
\newtheorem{Prob}[Thm]{Problem}
\newtheorem{Exa}[Thm]{Example}
\newcommand{\mysection}[2]{%
\vspace{2mm}\section{\bf #1}\label{#2}
}
\newcommand{\fig}[1]
        {\raisebox{-0.5\height}
                 {\includegraphics{#1}}
        }
\def\Z{{\mathbb Z}}
\def\R{{\mathbb R}}
\def\Q{{\mathbb Q}}
\def\calA{\mathscr{A}}
\def\calD{\mathscr{D}}
\def\calE{\mathscr{E}}
\def\calF{\mathscr{F}}
\def\calM{\mathscr{M}}
\def\calQ{\mathscr{Q}}
\def\Hom{\mathrm{Hom}}
\def\tcoprod{\textstyle\coprod}
\newcommand{\mapright}[1]{
	\smash{\mathop{
		\hbox to 1cm{\rightarrowfill}}\limits^{#1}}}
\newcommand{\mapleft}[1]{
	\smash{\mathop{
		\hbox to 1cm{\leftarrowfill}}\limits^{#1}}}
\def\Tr{\mathrm{Tr}}
\def\asum#1#2{\sum_{{{#1}\atop{#2}}}}
\def\Conf{\mathrm{Conf}}
\def\bcalM{\overline{\calM}}
\def\bcalD{\overline{\calD}}
\def\ve{\varepsilon}
\def\bConf{\overline{\Conf}}
\def\loc{\mathrm{local}}
\def\acalM{\calM^{\mathrm{AL}}}
\def\bacalM{\bcalM^{\mathrm{AL}}}
\def\wM{\widetilde{M}}
\def\wf{\widetilde{f}}
\def\wcalD{\widetilde{\calD}}
\def\wcalA{\widetilde{\calA}}
\def\wcalF{\widetilde{\calF}}
\def\ind{\mathrm{ind}}
\def\twedge{\textstyle\bigwedge}
\def\irr{\mathrm{irr}}
\def\sign{\mathrm{sign}}
\def\anomaly{\mathrm{anomaly}}
\def\wZ{\widehat{Z}}
\def\Spin{\mathrm{Spin}}
\begin{document}

\title[An invariant of fiberwise Morse functions on surface bundle over $S^1$]{An invariant of fiberwise Morse functions on surface bundle over $S^1$ by counting graphs}
\author{Tadayuki Watanabe}
\address{Department of Mathematics, Shimane University,
1060 Nishikawatsu-cho, Matsue-shi, Shimane 690-8504, Japan}
\email{tadayuki@riko.shimane-u.ac.jp}
\date{\today}
\subjclass[2000]{57M27, 57R57, 58D29, 58E05}

{\noindent\footnotesize {\rm Preprint} (2015)}\par\vspace{15mm}
\maketitle
\vspace{-6mm}
\setcounter{tocdepth}{2}
\begin{abstract}
We apply Lescop's construction of $\Z$-equivariant perturbative invariant of knots and 3-manifolds to the explicit equivariant propagator of ``AL-paths'' given in \cite{Wa2}. We obtain an invariant $\wZ_n$ of certain equivalence classes of fiberwise Morse functions on a 3-manifold fibered over $S^1$, which can be considered as a higher loop analogue of the Lefschetz zeta function and whose construction will be applied to that of finite type invariants of knots in such a 3-manifold. We also give a combinatorial formula for Lescop's equivariant invariant $\calQ$ for 3-manifolds with $H_1=\Z$ fibered over $S^1$. Moreover, surgery formulas of $\wZ_n$ and $\calQ$ for alternating sums of surgeries are given. This gives another proof of Lescop's surgery formula of $\calQ$ for special kind of 3-manifolds and surgeries, which is simple in the sense that the formula is obtained easily by counting certain graphs in a 3-manifold. 
\end{abstract}
\par\vspace{3mm}

\def\baselinestretch{1.07}\small\normalsize

\mysection{Introduction}{s:intro}

It is known that the trivial connection contributions of Chern--Simons perturbation theory (\cite{AS, Ko, BC1, BC2, KT} etc.) for homology 3-spheres give very fine topological invariants, which are fine enough to dominate all real-valued finite type invariants. In \cite{Oh1, Oh2}, T.~Ohtsuki defined a $\Z$-equivariant perturbative invariant of 3-manifolds with $b_1=1$ and proved that it is also very fine for 3-manifolds with $b_1=1$. This suggests that the theory of perturbative quantum invariants is quite rich also for 3-manifolds with $b_1>0$. Shortly after Ohtsuki's results appeared, C.~Lescop developed another theory of $\Z$-equivariant perturbative invariant using configuration spaces (\cite{Les2, Les3, Les4}), which can be seen as a $\Z$-equivariant version of Chern--Simons perturbation theory of homology 3-spheres. In \cite{Les2, Les3, Les4}, two kinds of interpretations of $\Z$-equivariant invariant from configuration spaces are given:
\begin{enumerate}
\item As a $\Z$-equivariant invariant of knots in a 3-manifold. This takes values in a space $\calA_n(\widehat{\Lambda})$ of Jacobi diagrams (Definition~\ref{def:jacobi}) whose edges are colored by rational functions in a formal variable $t$. 
\item As an invariant of 3-manifolds with $b_1=1$ by considering the equivalence class of the $\Z$-equivariant invariant of knots with respect to some equivalence relation in the target space.
\end{enumerate}
Lescop proved in \cite{Les4} in a topological method that the invariant of (1) is universal among Garoufalidis--Rozansky's finite type invariants of knots in an integral homology 3-sphere defined by null-claspers (\cite{GR}), which are related to the loop expansion of Kontsevich's universal Vassiliev invariant of knots. Lescop's knot invariant can be considered as a configuration space version of Kricker's rational invariant $Z^\mathrm{rat}$ for knots defined combinatorially (\cite{GK}), which is universal among finite type invariants of \cite{GR}. She also proved in \cite{Les2} that for $M$ with $H_1(M;\Z)=\Z$ and with trivial Alexander polynomial, the invariant of (2) coincides with the 2-loop part of the invariant of Ohtsuki defined in \cite{Oh2} for 3-manifolds with $b_1=1$, up to normalization. In the definitions of Lescop's invariants, a knot $K$ in a 3-manifold $M$ is fixed and a certain 3-cycle $ST(K)$ in the boundary of the equivariant configuration space $\bConf_{K_2}(M)$ of 2 distinct points in $M$ ($\Z$-covering of the configuration space $\bConf_2(M)$ of 2 distinct points in $M$, definition in \S\ref{ss:equiv-conf}), which is associated to $K$, is considered. Then the 3-cycle has an extension to a 4-chain in $\bConf_{K_2}(M)$ with coefficients in the field of rational functions, which is called an equivariant propagator. For example, the term for the $\Theta$-graph in Lescop's equivariant invariant is given by the ``equivariant triple intersection'' in $\bConf_{K_2}(M)$ among equivariant propagators.

In this paper, we apply Lescop's construction to the explicit equivariant propagator of ``AL-paths'' given in \cite{Wa2}. Namely, we introduced in \cite{Wa2} the notion of AL-paths in a 3-manifold $M$ fibered over $S^1$ (definition in \S\ref{ss:AL}) and gave an explicit 4-chain $Q(\xi)$ in $\bConf_{K_2}(M)$ by the moduli space of AL-paths of a fiberwise gradient $\xi$ of a fiberwise Morse function on $M$. Roughly, an AL-path of $\xi$ is a piecewise smooth path consisting of segments each of which is either a part of a critical locus of $\xi$ (vertical segment) or a flow line of $-\xi$ (horizontal segment). Then, for example, the equivariant triple intersection of (parallel copies of) $Q(\xi)$ is given by a generating function of counts of certain graphs in $M$ such that each edge is an AL-path (AL-graphs, Figure~\ref{fig:AL-graph-generic}). 

We shall give two results. First, we take a fiberwise Morse function on $M$ as an extra structure to define an $\calA_n(\widehat{\Lambda})$-valued invariant. We show that our invariant $\wZ_n$ is an invariant of concordance classes of fiberwise Morse functions on $M$ (definition in \S\ref{ss:FMF}) and of spin structures on $M$ (Theorem~\ref{thm:main}), by a bifurcation argument similar to Hatcher--Wagoner \cite{HW}. Roughly, concordance of fiberwise Morse function is analogous to isotopy of closed braid in $M$. Though the concordance relation looks too restricted, it is enough for defining equivariant perturbative isotopy invariants for nullhomologous knots in $M$. We will write in a subsequent paper \cite{Wa3} about the knot invariants which count AL-graphs with univalent vertices attached to a knot. 

Next, concerning (2), we modify our $Q(\xi)$ in order to utilize Lescop's result because unlike Lescop's equivariant propagator in \cite{Les2} the boundary of $Q(\xi)$ does not concentrate on a knot $K$. By adding a bordism in $\partial \bConf_{K_2}(M)$ between $\partial Q(\xi)$ and a multiple of a 2-sphere bundle over $K$, we give a combinatorial formula for Lescop's 3-manifold invariant $\calQ$ in \cite{Les2} for fibered 3-manifolds with $H_1(M)=\Z$ as a generating function of counts of AL-graphs (Theorem~\ref{thm:Q}) with some geometric correction terms. We remark that Lescop's invariant is actually nontrivial, as shown in \cite{Les2}. By using the combinatorial formulas for $\wZ_n$ and $\calQ$, we derive surgery formulas of the invariant $\wZ_n$ (Theorem~\ref{thm:surgery}) and of $\calQ$ (Theorem~\ref{thm:surgery2}). The surgery formula suggests that $\wZ_n$ has a property similar to finite type invariant as does the perturbative invariant for homology 3-spheres (\cite{KT}). This gives another proof of Lescop's surgery formula of $\calQ$ in \cite{Les2} for special kind of 3-manifolds and surgeries, which is simple in the sense that the formula is obtained easily by counting certain graphs in a 3-manifold. 

In computing the values of $\wZ_n$ and $\calQ$ for concrete examples, we show that the homology of the mapping torus of a diffeomorphism of a closed surface is given by counting AL-paths (Proposition~\ref{prop:AL-homology}). The count of AL-paths between fibers of a surface bundle gives the twisted tensor product of K.~Igusa (\cite{Ig2}), which in this setting computes the homology of the mapping torus. 

\subsection{Conventions}
In this paper, manifolds and maps between them are assumed to be smooth unless otherwise noticed. By an $n$-dimensional {\it chain} in a manifold $X$, we mean a {\it finite} linear combination of smooth maps from oriented compact $n$-manifolds with corners to $X$. We understand a chain as a chain of smooth simplices by taking triangulations of manifolds. Let $C_i(X)$ denote the group of piecewise smooth chains in $X$ and let $C^i(X)=\Hom(C_i(X),\Z)$. We represent an orientation of a manifold $X$ by a non-vanishing section of $\twedge^{\dim{X}} T^*X$ and denote it by $o(X)$. We consider a coorientation $o^*_X(V)$ of a submanifold $V$ of a manifold $X$ as an orientation of the normal bundle of $V$ and represent it by a differential form in $\Gamma^\infty(\twedge^\bullet T^*X|_{V})$. We identify the normal bundle $N_V$ with the orthogonal complement $TV^\perp$ in $TX$, by taking a Riemannian metric on $X$. We equip orientation or coorientation of $V$ so that the identity
\[ o(V)\wedge o^*_X(V)\sim o(X) \]
holds, where we say that two orientations $o$ and $o'$ are equivalent ($o\sim o'$) if they are related by multiple of a positive function. $o(V)$ determines $o^*_X(V)$ up to equivalence and vice versa. We orient boundaries of an oriented manifold by the inward normal first convention. For a sequence of submanifolds $A_1,A_2,\ldots,A_r\subset W$ of a smooth Riemannian manifold $W$, we say that the intersection $A_1\cap A_2\cap \cdots\cap A_r$ is {\it transversal} if for each point $x$ in the intersection, the subspace $N_xA_1+N_xA_2+\cdots+N_xA_r\subset T_xW$ spans the direct sum $N_xA_1\oplus N_xA_2\oplus\cdots\oplus N_xA_r$, where $N_xA_i$ is the orthogonal complement of $T_xA_i$ in $T_xW$ with respect to the Riemannian metric. 

Let $\Q[t_1^{\pm 1},\ldots,t_n^{\pm 1}]$ denote the ring of Laurent polynomials in $n$ variables. Let $\Q(t)$ denote the field of fractions of $\Q[t^{\pm 1}]$ and we identify $\Q(t)$ with a subset of the field of formal power series in $t$ with finitely many negative degree terms.

\subsection{Organization}
In \S\ref{s:preliminaries}, we review the definitions of relevant graphs, concordance of fiberwise Morse functions, AL-paths and equivariant propagator. In \S\ref{s:invariants}, we give definitions of two invariants $\wZ_n$ and $\calQ$, which are mainly based on Lescop's construction in \cite{Les2,Les3,Les4}. We claim that one is an invariant of concordance classes of fiberwise Morse functions on a fibered 3-manifold over $S^1$. The other one gives a combinatorial formula for Lescop's invariant for fibered 3-manifolds with $b_1=1$ in \cite{Les2, Les3}. In \S\ref{s:invariance}, we prove that $\wZ_n$ is invariant under a concordance of fiberwise Morse functions by a bifurcation argument. In \S\ref{s:sformula}, we give a surgery formula of $\wZ_n$. Namely, we consider a special kind of Torelli surgery and we give an explicit formula of the value of $\wZ_n$ for an alternating sum of surgeries. In \S\ref{s:homology}, we define a chain complex by counting AL-paths. The homology of the chain complex is naturally isomorphic to the homology of the total space of the fibration. This is used to understand the surgery formula.


\mysection{Preliminaries}{s:preliminaries}

\subsection{Graphs}\label{ss:graph}

By a {\it graph}, we mean a finite connected graph with each edge oriented, i.e., an ordering of the boundary vertices of an edge is fixed. We allow multiple edges and self-loops. A {\it labeled graph} is a graph $\Gamma$ equipped with bijections $\alpha:\{1,2,\ldots,n\}\to V(\Gamma)$ and $\beta:\{1,2,\ldots,\ell\}\to E(\Gamma)$, where $V(\Gamma)$ is the set of vertices of $\Gamma$ and $E(\Gamma)$ is the set of edges of $\Gamma$. We will identify $V(\Gamma)$ and $E(\Gamma)$ with the sets of labels through $\alpha$ and $\beta$ respectively. Let $E^\rho(\Gamma)$ denote the subset of $E(\Gamma)$ consisting of self-loop edges and let $E^I(\Gamma)=E(\Gamma)\setminus E^\rho(\Gamma)$. An {\it orientation} of a graph is an orientation of the real vector space
\[ \R^{V(\Gamma)}\oplus \bigoplus_{e\in E(\Gamma)}\R^{H(e)}, \]
where $H(e)=\{e_+,e_-\}$ is the two-element set of `half-edges', namely $e_-=\varphi^{-1}[0,\frac{1}{2}]$ and $e_+=\varphi^{-1}[\frac{1}{2},1]$ for an orientation preserving homeomorphism $\varphi:e\to [0,1]$. A labeled graph $(\Gamma,\alpha,\beta)$ gives a canonical orientation $o(\Gamma,\alpha,\beta)$. A {\it vertex-orientation} of a vertex $v$ in a graph is a cyclic ordering of the edges incident to $v$. 

Let $R$ be a commutative ring with 1. For a trivalent graph $\Gamma$, an {\it $R$-coloring} of $\Gamma$ is an assignment of an element of $R$ to every edge of $\Gamma$. An $R$-coloring is represented by a map $\phi:E(\Gamma)\to R$. The {\it degree} of a trivalent graph is defined as half the number of vertices. 

\begin{Def}[Garoufalidis--Rozansky \cite{GR}]\label{def:jacobi}
Let $\Lambda=\Q[t^{\pm 1}]$ and $\widehat{\Lambda}=\Q(t)$. Let $\calA_n(\Lambda)$ (resp. $\calA_n(\widehat{\Lambda})$) be the vector space over $\Q$ spanned by pairs $(\Gamma,\phi)$, where $\Gamma$ is a trivalent graph of degree $n$ with vertex-orientation and $\phi$ is a $\Lambda$-coloring (resp. $\widehat{\Lambda}$-coloring) of $\Gamma$, quotiented by the relations AS, IHX, Orientation reversal, Linearity, Holonomy (Figure~\ref{fig:relations}) and automorphisms of oriented graphs. 
\end{Def}
\begin{figure}
\fig{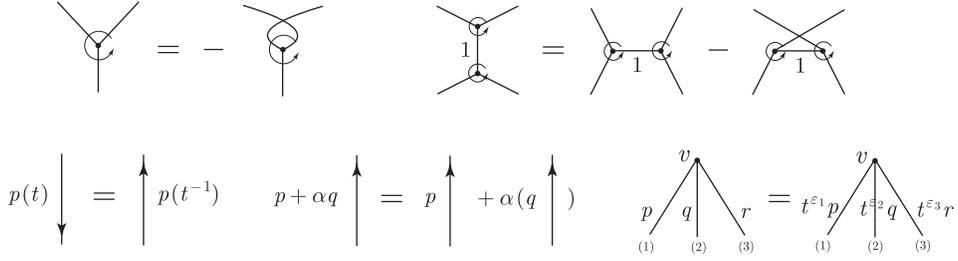}
\caption{The relations AS, IHX, Orientation reversal, Linearity and Holonomy. $p,q,r\in \Lambda$ (or $p,q,r\in \widehat{\Lambda}$), $\alpha\in\Q$. The exponent $\ve_i$ is $1$ if the $i$-th edge is oriented toward $v$ and otherwise $-1$.}\label{fig:relations}
\end{figure}

It is known that there is a canonical correspondence between an orientation of a trivalent graph and a vertex-orientation to each vertex (e.g., \cite{CV}). 

We denote a pair $(\Gamma,\phi)$ by $\Gamma(\phi)$ or by $\Gamma(\phi(e_1),\phi(e_2),\ldots,\phi(e_\ell))$. We say that a $\Lambda$-colored graph $\Gamma(\phi)$ is a {\it monomial} if for each edge $e$ of $\Gamma$, $\phi(e)$ is a power of $t$. In this case, we may consider $\phi$ as a map $E(\Gamma)\to \Z$ by identifying $t^p$ with $p$. There is a bijective correspondence between the equivalence class of a monomial labeled graph $\Gamma(\phi)$ modulo the Holonomy relation and the homotopy class of a continuous map $c:\Gamma\to S^1$, or the cohomology class $[c]\in H^1(\Gamma;\Z)=[\Gamma,S^1]$. 


\subsection{Fiberwise Morse functions and their concordances}\label{ss:FMF}

Let $\kappa:M\to S^1$ be a smooth fiber bundle with fiber diffeomorphic to a closed connected oriented 2-manifold $\Sigma$. We equip $M$ with a Riemannian metric. We fix a fiberwise Morse function $f:M\to \R$ and its gradient $\xi:M\to \mathrm{Ker}\,d\kappa$ along the fibers that satisfies the parametrized Morse--Smale condition, i.e., the descending manifold loci and the ascending manifold loci for $\xi$ are mutually transversal in $M$. We consider only fiberwise Morse functions that are {\it oriented}, i.e., the bundles of negative eigenspaces of the Hessians along the fibers on the critical loci are oriented. There always exists an oriented fiberwise Morse function on $M$ (e.g., \cite{Wa2}). The graph of critical values of $f|_{\kappa^{-1}(c)}$ forms a diagram in $\R\times S^1$ (Cerf's graphic), which consists of closed immersed curves. See Figure~\ref{fig:AL-path}. For a generic choice of $f$, the intersection of curves in its graphic consists of transversal double points, which are called a {\it level exchange points}. 

\begin{Def}
A {\it generalized Morse function (GMF)} is a $C^\infty$ function on a manifold with only Morse or birth-death singularities (\cite[Appendix]{Ig1}). A {\it fiberwise GMF} is a $C^\infty$ function $f:M\to \R$ whose restriction $f_c=f|_{\kappa^{-1}(c)}:\kappa^{-1}(c)\to \R$ is a GMF for all $c\in S^1$. A {\it critical locus} of a fiberwise GMF is the subset of $M$ consisting of critical points of $f_c$, $c\in S^1$. A fiberwise GMF is {\it oriented} if it is oriented outside birth-death loci and if birth-death pairs near a birth-death locus have incidence number 1.
\end{Def}

It is known that for a pair of fiberwise Morse functions $f_0,f_1:M\to \R$, there exists a homotopy $\widetilde{f}=\{f_s\}_{s\in [0,1]}$ between $f_0$ and $f_1$ in the space of oriented GMF's on $M$ which gives an oriented fiberwise GMF on the surface bundle $\kappa\times\mathrm{id}:M\times [0,1]\to S^1\times [0,1]$. 

\begin{Def}
We say that the homotopy $\widetilde{f}$ is a {\it concordance} if each birth-death locus of $\wf$ in $M\times [0,1]$ projects to a closed curve that is not nullhomotopic in $S^1\times[0,1]$. 
\end{Def}

\begin{Rem}
There exists a pair $f_0,f_1$ of oriented fiberwise Morse functions as above that are homotopic through a family of oriented GMF's but not concordant because there may be a birth-death locus for a homotopy with nullhomotopic projection in $S^1\times [0,1]$ which can not be removed by deformation. In \cite[\S{1.8}]{Wa2}, we considered a fiberwise gradient of a fiberwise Morse function as a kind of a limit of the nonsingular vector field $\mathrm{grad}\,\kappa$. In this way, a concordance would correspond to a certain isotopy of such a vector field. A birth-death locus in a concordance would correspond to a birth or a cancellation of closed orbits. 
\end{Rem}

\subsection{AL-paths}\label{ss:AL-path}

Let $\pi:\wM\to M$ be the $\Z$-covering associated to $\kappa$. Let $\overline{\kappa}:\widetilde{M}\to \R$ be the lift of $\kappa$, $\overline{f}:\wM\to \R$ denote the $\Z$-invariant lift of $f$, and $\overline{\xi}$ denote the lift of $\xi$. We say that a piecewise smooth embedding $\sigma:[\mu,\nu]\to M$ is {\it descending} if $\overline{\kappa}(\overline{\sigma}(\mu))\geq \overline{\kappa}(\overline{\sigma}(\nu))$ and $\overline{f}(\sigma(\mu))\geq \overline{f}(\sigma(\nu))$, where $\overline{\sigma}:[\mu,\nu]\to\wM$ is a lift of $\sigma$. We say that $\sigma$ is {\it horizontal} if $\mathrm{Im}\,\sigma$ is included in a single fiber of $\kappa$ and say that $\sigma$ is {\it vertical} if $\mathrm{Im}\,\sigma$ is included in a critical locus of $f$.

\begin{Def}\label{def:al-path}
Let $x,y$ be two points of $M$. An {\it AL-path from $x$ to $y$} is a sequence $\gamma=(\sigma_1,\sigma_2,\ldots,\sigma_n)$, where
\begin{enumerate}
\item for each $i$, $\sigma_i$ is a descending embedding $[\mu_i,\nu_i]\to M$ for some real numbers $\mu_i,\nu_i$ such that $\mu_i< \nu_i$, 
\item for each $i$, $\sigma_i$ is either horizontal or vertical with respect to $f$,
\item if $\sigma_i$ is horizontal, then $\sigma_i$ is a flow line of $-\xi$, possibly broken at critical loci,
\item $\sigma_1(\mu_1)=x$, $\sigma_n(\nu_n)=y$,
\item $\sigma_i(\nu_i)=\sigma_{i+1}(\mu_{i+1})$ for $1\leq i<n$,
\item if $\sigma_i$ is horizontal (resp. vertical) and if $i<n$, then $\sigma_{i+1}$ is vertical (resp. horizontal).
\end{enumerate}
We say that two AL-paths are {\it equivalent} if they differ only by reparametrizations on segments.
\end{Def}
See Figure~\ref{fig:AL-path} for an example of an AL-path. We remark that we do not allow $\sigma_i$ to be a constant map. For an AL-path $\gamma=(\sigma_1,\sigma_2,\ldots,\sigma_n)$, we write
\[ \mathrm{Im}\,\gamma=\bigcup_{i=1}^n \mathrm{Im}\,\sigma_i. \] 
\begin{figure}
\fig{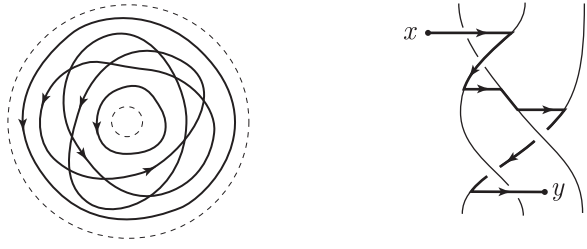}
\caption{Cerf's graphic and an AL-path}\label{fig:AL-path}
\end{figure}

\subsection{Fulton--MacPherson compactification of configuration spaces}\label{ss:FM}

We recall the Fulton--MacPherson type compactification of configuration spaces of real oriented manifolds by Kontsevich. See \cite{Ko, BT, Les1} for detail. For a closed $d$-dimensional manifold $M$, the configuration space $\Conf_n(M)$ is the complement of the closed subset
\[ \Sigma=\{(x_1,\ldots,x_n)\in M^n\,;\,\mbox{$x_i=x_j$ for some $i\neq j$}\}\subset M^n. \]
There is a natural filtration $\Sigma=\Sigma_{n-1}\supset\cdots\supset\Sigma_2\supset\Sigma_1$ with
\[ \Sigma_j=\{(x_1,\ldots,x_n)\in M^n\,;\,\#\{x_1,\ldots,x_n\}\leq j\}. \]
The difference $\Sigma_{i+1}-\Sigma_i$ is a disjoint union of smooth submanifolds of $M^n-\Sigma_i$. This property allows one to iterate (real) blow-ups along the filtration from the deepest one: First, one can consider the blow-up $B\ell_{\Sigma_0}(M^n)$ along the $d$-dimensional submanifold $\Sigma_0$ of $M^n$ with oriented normal bundle. Recall that a blow-up replaces a submanifold with its normal sphere bundle. Since the closure of $\Sigma_1-\Sigma_0$ in $B\ell_{\Sigma_0}(M^n)$ is also a disjoint union of smooth submanifolds (with boundaries), one can apply another blow-up along it, and so on. After the blow-ups along all the strata of $\Sigma$ of codimension $\geq 1$, one obtains a smooth compact manifold with corners $\bConf_n(M)$. 

The space $\bConf_n(M)$ has a natural stratification corresponding to bracketings of the $n$ letters $1,2,\ldots,n$, e.g., $((137)(25))46$ (see \cite{FM}). Roughly speaking, a pair of brackets corresponds to a face created by one blow-up. For example, the face corresponding to $((137)(25))46$ is obtained by a sequence of blow-ups corresponding to a sequence $1234567\to (12357)46\to ((137)(25))46$. 

The codimension one (boundary) strata of $\bConf_n(M)$ correspond to bracketings of the form $(\cdots)\cdots$, with only one pair of brackets. The stratum $\partial_{\{1,\ldots,j\}}\bConf_n(M)$ of $\partial\bConf_n(M)$ corresponding to the bracketings $(12\cdots j)j+1\cdots n$ is the face created by the blow-up along the closure of the following submanifold of $M^n$:
\[ \Delta_j=\{(x_1,\ldots,x_n)\in M^n\,;\,x_1=\cdots=x_j,\mbox{otherwise distinct}\} \]
in the result of the previous blow-ups. More precisely, $\partial_{\{1,\ldots,j\}}\bConf_n(M)$ can be naturally identified with the blow-ups of the total space of the normal $S^{(j-1)d-1}$-bundle of $\Delta_j\subset M^n$ along the intersection with the closures of deeper diagonals that correspond to deeper bracketings. The fiber of the normal $S^{(j-1)d-1}$-bundle over a point $(x_j,\ldots,x_n)\in\Delta_j$ is
\[ (\{(0,y_2,\ldots,y_j)\in(\R^d)^j\}-\{0\})/\mbox{(dilation)}\cong S^{(j-1)d-1}, \]
where the coordinate $y_i$ corresponds to $x_i-x_1$ (where it makes sense) via a local framing of $T_{x_j}M$. The stratum $\partial_{\{1,\ldots,j\}}\bConf_n(M)$ is a fiber bundle over $\Delta_j$. We denote the fiber of $\partial_{\{1,\ldots,j\}}\bConf_n(M)$ over a point of $\Delta_j$ by $\Conf_j^\mathrm{local}(\R^d)$. We identify $\Conf_j^\mathrm{local}(\R^d)$ with the subset of $\Conf_j(\R^d)$, as
\[ \Conf_j^\mathrm{local}(\R^d)=\Bigl\{(y_1,\ldots,y_j)\in\Conf_j(\R^d)\,;\, y_1=0, \sum_{\ell=2}^j\|y_\ell\|^2=1\Bigr\}. \]
 We denote by $\bConf_j^\mathrm{local}(\R^d)$ the closure of the image of the inclusion $\Conf_j^\mathrm{local}(\R^d)\hookrightarrow \bConf_j(\R^d)$, which is compact. The base space $\Delta_j$ is naturally diffeomorphic to $\bConf_{n-j+1}(M)$ and we denote by $\mathrm{pr}_j:\Delta_j\to M$ the projection $(x_j,\ldots,x_n)\mapsto x_j$. So $\partial_{\{1,\ldots,j\}}\bConf_n(M)$ has the structure of the pullback of the associated $\bConf_j^\mathrm{local}(\R^d)$-bundle of $TM$ by $\mathrm{pr}_j$:
\[ \xymatrix{
  \partial_{\{1,\ldots,j\}}\bConf_n(M) \ar[r] \ar[d] & P\times_{SO(d)}\bConf_j^\loc(\R^d) \ar[d] \\
  \Delta_j \ar[r]^-{\mathrm{pr}_j} & M
}\]
($P\to M$ is the orthonormal frame bundle of $TM$.) The definition of $\partial_A\bConf_n(M)$ for general subset $A\subset \{1,\ldots,n\}$ corresponding to the bracketing $(A)A^c$ is similar. 

It will turn out that the faces of $\partial\bConf_n(M)$ corresponding to coincidence of two points are special among the codimension one strata of $\bConf_n(M)$. We denote by $\partial^\mathrm{pri}\bConf_n(M)$ (`pri' for principal) the union of the faces corresponding to coincidence of two points and we denote by $\partial^\mathrm{hi}\bConf_n(M)$ (`hi' for hidden) the union of all the faces corresponding to coincidence of at least three points. 

\subsection{Lescop's equivariant configuration space (\cite{Les3, Les4})}\label{ss:equiv-conf}

Let $\Gamma$ be a labeled graph with $n$ vertices and $m$ edges. By the labeling $\alpha:\{1,2,\ldots,n\}\to V(\Gamma)$, we identify $E(\Gamma)$ with the set of ordered pairs $(i,j)$, $i,j\in \{1,2,\ldots,n\}$. Let $M^\Gamma$ denote the set of tuples
\[ (x_1,x_2,\ldots,x_{n};\{\gamma_{ij}\}_{(i,j)\in E(\Gamma)}), \]
where $x_i\in M$ and $\gamma_{ij}$ is the homotopy class of continuous maps $c_{ij}:[0,1]\to S^1$ relative to the endpoints such that $c_{ij}(0)=\kappa(x_i)$ and $c_{ij}(1)=\kappa(x_j)$. We consider $M^\Gamma$ as a topological space as follows. Let $C^0(\Gamma,S^1)$ be the space of continous maps $\Gamma\to S^1$ equipped with the $C^0$-topology and let $C^\Gamma$ be the space that is the pullback in the following commutative square.
\[ \xymatrix{
  C^\Gamma \ar[r] \ar[d] & C^0(\Gamma,S^1) \ar[d]^-{\nu}\\
  M^{n} \ar[r]^-{\kappa\times\cdots\times\kappa} & (S^1)^{n}
} \]
Here, $\nu:C^0(\Gamma,S^1)\to (S^1)^{n}$ assigns to each $\phi:\Gamma\to S^1$ the images of the $n$ vertices of $\Gamma$ under $\phi$. The fiberwise quotient map $C^\Gamma\to M^\Gamma$ by the homotopy relation of edges gives $M^\Gamma$ the quotient topology. The forgetful map $\overline{\pi}:M^\Gamma\to M^n$ is a locally trivial fibration, which is a $\Z^m$-covering. There is a canonical bijection between the set of connected components in $M^\Gamma$ with $H^1(\Gamma;\Z)=[\Gamma,S^1]\approx \Z^{1-\chi(\Gamma)}$. Note that the covering $M^\Gamma$ may depend on $\kappa$ if $b_1(M)>1$.

Let $\bConf_{\Gamma}(M)$ be the space obtained from $M^\Gamma$ by blowing-up along all the lifts of the diagonals in $M^n$ as in \S\ref{ss:FM}. The forgetful map
\[ \overline{\pi}:\bConf_\Gamma(M)\to \bConf_n(M), \]
is a $\Z^m$-covering. Since $\bConf_{\Gamma}(M)$ is naturally a $\Z^m$-space by the covering translation, the twisted homology
\[ H_i(\bConf_{\Gamma}(M))\otimes_{\Lambda_{\Gamma}}\widehat{\Lambda}_{\Gamma}, \]
where $\Lambda_{\Gamma}=\Q[\{t_{ij}^{\pm 1}\}_{(i,j)\in E(\Gamma)}]$ and $\widehat{\Lambda}_{\Gamma}=\bigotimes_{(i,j)\in E(\Gamma)} \Q(t_{ij})$ (tensor product of $\Q$-modules), is defined. Here, $\Lambda_{\Gamma}$ acts on $\widehat{\Lambda}_{\Gamma}$ by $(\prod_{(i,j)}t_{ij}^{k_{ij}})(\bigotimes_{(i,j)}f_{ij}) = \bigotimes_{(i,j)}t_{ij}^{k_{ij}}f_{ij}$.

\begin{Exa}
If $\Gamma$ is the complete graph $K_2$ with 2 vertices, $\overline{\pi}_{K_2}:\bConf_{K_2}(M)\to \bConf_2(M)$ is a $\Z$-covering. The associated twisted homology is $H_i(\bConf_{K_2}(M))\otimes_{\Lambda}\widehat{\Lambda}$.

If $\Gamma$ is a labeled trivalent graph with $2n$ vertices, we obtain a $\Z^{3n}$-covering $\overline{\pi}_\Gamma:\bConf_{\Gamma}(M)\to \bConf_{2n}(M)$. There is a canonical bijection between the set of connected components in $\bConf_{\Gamma}(M)$ with $H^1(\Gamma;\Z)=\Z^{1+n}$. The associated twisted homology is $H_i(\bConf_{\Gamma}(M))\otimes_{\Lambda_{\Gamma}}\widehat{\Lambda}_{\Gamma}$, where $\Lambda_{\Gamma}=\Q[t_1^{\pm 1},t_2^{\pm 1},\ldots,t_{3n}^{\pm 1}]$ and $\widehat{\Lambda}_{\Gamma}=\Q(t_1)\otimes_\Q\Q(t_2)\otimes_\Q\cdots\otimes_\Q\Q(t_{3n})$.
\end{Exa}

\subsection{Equivariant propagator}

Following \cite{Les2, Les3, Les4}, we will consider below an intersection form among chains in the equivariant configuration space $\bConf_\Gamma(M)$. It will be necessary to take a fundamental chain for the intersection called an {\it equivariant propagator}. Here, we take a special one from \cite{Wa2} among equivariant propagators. 

Let $\xi$ be the fiberwise gradient of an oriented fiberwise Morse function on $M$. We say that an AL-path $\gamma$ in $M$ with positive length with respect to the Riemannian metric on $M$ is a {\it closed AL-path} if the endpoints of $\gamma$ coincide. A closed AL-path $\gamma$ gives a piecewise smooth map $\bar{\gamma}:S^1\to M$, which can be considered as a ``closed orbit'' in $M$. We will also call $\bar{\gamma}$ a closed AL-path. A closed AL-path has an orientation that is determined by the orientations of descending and ascending manifolds loci of $\widetilde{\xi}$. See \S\ref{ss:coori} for detail. Then we define the sign $\ve(\gamma)\in\{-1,1\}$ and the period $p(\gamma)$ of $\gamma$ by 
\[ p(\gamma)=|\langle[d\kappa],[\bar{\gamma}]\rangle|,\quad 
\ve(\gamma)=\frac{\langle[d\kappa],[\bar{\gamma}]\rangle}{|\langle[d\kappa],[\bar{\gamma}]\rangle|}.
\]
Let $ST(M)$ be the subbundle of $TM$ of unit tangent vectors. Let $ST(\gamma)$ be the pullback $\bar{\gamma}^*ST(M)$, which can be considered as a piecewise smooth 3-dimensional chain in $\partial\bConf_{K_2}(M)$. We say that two closed AL-paths $\gamma_1$ and $\gamma_2$ are {\it equivalent} if there is a degree 1 homeomorphism $g:S^1\to S^1$ such that $\bar{\gamma}_1\circ g=\bar{\gamma}_2$. The indices of vertical segments in a closed AL-path must be all equal since an AL-path is descending. We define the index $\mathrm{ind}\,\gamma$ of a closed AL-path $\gamma$ to be the index of a vertical segment (critical locus) in $\gamma$, namely, the index of the critical point of $f|_{\kappa^{-1}(c)}$ for any $c\in S^1$ that is the intersection of $\gamma$ with $\kappa^{-1}(c)$. 

Let $M_0=M\setminus\bigcup_{\gamma\,:\,\mathrm{critical\, locus}}\gamma$ and let $s_\xi:M_0\to ST(M_0)$ be the normalization $-\xi/\|\xi\|$ of the section $-\xi$. The closure $\overline{s_\xi(M_0)}$ in $ST(M)$ is a smooth manifold with boundary whose boundary is the disjoint union of circle bundles over the critical loci $\gamma$ of $\xi$, for a similar reason as \cite[Lemma~4.3]{Sh}. The fibers of the circle bundles are equators of the fibers of $ST(\gamma)$. Let $E^-_\gamma$ be the total space of the 2-disk bundle over $\gamma$ whose fibers are the lower hemispheres of the fibers of $ST(\gamma)$ which lie below the tangent spaces of the level surfaces of $\kappa$. Then $\partial\overline{s_\xi(M_0)}=\bigcup_\gamma \partial E_\gamma^-$ as sets. Let
\[ s_\xi^*(M)=\overline{s_\xi(M_0)}\cup \bigcup_\gamma E^-_\gamma\subset ST(M).\]
This is a 3-dimensional piecewise smooth manifold. We orient $s_\xi^*(M)$ by extending the natural orientation $(s_\xi^{-1})^*o(M)$ on $s_\xi(M_0)$ induced from the orientation $o(M)$ of $M$. The piecewise smooth projection $s_\xi^*(M)\to M$ is a homotopy equivalence and $s_\xi^*(M)$ is homotopic to $s_{\hat{\xi}}$. 

Let $K$ be a knot in $M$ such that $\langle[d\kappa],[K]\rangle=-1$. Let $\acalM_{K_2}(\xi)$ be the set of all AL-paths in $M$. There is a natural structure of non-compact manifold with corners on $\acalM_{K_2}(\xi)$. For a closed AL-path $\gamma$, we denote by $\gamma^\irr$ the minimal closed AL-path such that $\gamma$ is equivalent to the iteration $(\gamma^\irr)^k$ for a positive integer $k$ and we call $\gamma^\irr$ the {\it irreducible} factor of $\gamma$. This is unique up to equivalence. If $\gamma=\gamma^\irr$, we say that $\gamma$ is irreducible. We orient $ST(\gamma^\irr)$ so that $[ST(\gamma^\irr)]=p(\gamma^\irr)[ST(K)]$. Note that this may not be the one naturally induced from the orientation of $\gamma^\irr$ but from $\ve(\gamma^\irr)\gamma^\irr$. 

\begin{Thm}[\cite{Wa2}]\label{thm:propagator}
Let $M$ be the mapping torus of an orientation preserving diffeomorphism $\varphi:\Sigma\to \Sigma$ of closed, connected, oriented surface $\Sigma$. Let $\xi$ be the fiberwise gradient of an oriented fiberwise Morse function $f:M\to \R$. 
\begin{enumerate}
\item There is a natural closure $\bacalM_{K_2}(\xi)$ of $\acalM_{K_2}(\xi)$ that has the structure of a countable union of smooth compact manifolds with corners whose codimension 0 strata are disjoint from each other.
\item Let $\bar{b}:\bacalM_{K_2}(\xi)\to M^{K_2}$ be the evaluation map, which assigns the pair of the endpoints of an AL-path $\gamma$ together with the homotopy class of $\kappa\circ \gamma$ relative to the endpoints. Let $B\ell_{\bar{b}^{-1}(\widetilde{\Delta}_M)}(\bacalM_{K_2}(\xi))$ denote the blow-up of $\bacalM_{K_2}(\xi)$ along $\bar{b}^{-1}(\widetilde{\Delta}_M)$. Then $\bar{b}$ induces a map $B\ell_{\bar{b}^{-1}(\widetilde{\Delta}_M)}(\bacalM_{K_2}(\xi))\to \bConf_{K_2}(M)$ and it represents a 4-dimensional $\widehat{\Lambda}$-chain $Q(\xi)$ in $\bConf_{K_2}(M)$ that satisfies the identity
\[ \partial Q(\xi)=s_{\xi}^*(M)+\sum_\gamma (-1)^{\mathrm{ind}\,\gamma}\ve(\gamma)\,t^{p(\gamma)}\,ST(\gamma^\irr), \]
where the sum is taken over equivalence classes of closed AL-paths in $M$. Moreover, $(1-t)^2\Delta(t)Q(\xi)$ is a $\Lambda$-chain, where $\Delta(t)$ is the Alexander polynomial of the fibration $\kappa:M\to S^1$.
\item Suppose that $\kappa$ induces an isomorphism $H_1(M)/\mathrm{Torsion}\cong H_1(S^1)$. Let $K$ be a knot in $M$ such that $\langle[d\kappa],[K]\rangle=-1$. Then the homology class of $\partial Q(\xi)$ in $H_3(\partial\bConf_{K_2}(M))\otimes_\Lambda\widehat{\Lambda}$ is 
\[ [\partial Q(\xi)]=[s_{\xi}^*(M)]+\frac{t\zeta'_{\varphi}}{\zeta_{\varphi}}[ST(K)], \]
where $\zeta_\varphi$ is the Lefschetz zeta function of $\varphi$. 
\end{enumerate}
\end{Thm}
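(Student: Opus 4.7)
The plan is to prove the three assertions in turn by analyzing the moduli space $\acalM_{K_2}(\xi)$ of AL-paths and its natural compactification. For part (1), I would stratify $\acalM_{K_2}(\xi)$ by the combinatorial type of an AL-path, namely the number of alternating horizontal and vertical segments and the ordered sequence of critical loci visited. For a fixed type, an AL-path is determined by its starting point and the transition times between segments, and the parametrized Morse--Smale condition on $\xi$ ensures that the relevant descending and ascending manifold loci meet transversally, making each stratum a smooth manifold of the predicted dimension. The natural closure $\bacalM_{K_2}(\xi)$ is then obtained by adjoining all degenerate limits: shrinking of a segment to a point, splitting of a horizontal segment at a level-exchange point in Cerf's graphic, and insertion of additional vertical arcs as adjacent descending and ascending loci approach one another. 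Since the number of segments is unbounded, this is a countable union of smooth compact manifolds with corners whose top-dimensional strata are disjoint.

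For part (2), I would compute $\partial Q(\xi)$ by identifying the codimension-one strata of the blown-up moduli space $B\ell_{\bar{b}^{-1}(\widetilde{\Delta}_M)}(\bacalM_{K_2}(\xi))$ that map onto codimension-one pieces of $\bConf_{K_2}(M)$. Two qualitatively distinct boundary contributions arise. First, near the diagonal $\widetilde{\Delta}_M$, as the two endpoints of an AL-path coalesce the initial unit tangent direction is forced to lie in $-\xi/\|\xi\|$ at regular points and in the lower hemisphere bundle $E_\gamma^-$ of $ST(\gamma)$ near each critical locus; together these assemble exactly to the chain $s_\xi^*(M)$. Second, AL-paths whose combinatorial length tends to infinity accumulate on closed AL-paths, and the limiting direction data along $\gamma$ sweeps out $ST(\gamma^{\mathrm{irr}})$ in the equivariant cover, weighted by $t^{p(\gamma)}$ and signed by $(-1)^{\mathrm{ind}\,\gamma}\ve(\gamma)$ arising from the orientations of the descending and ascending loci. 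All remaining interior codimension-one faces cancel pairwise via the usual symmetric-gluing argument. To verify that $(1-t)^2\Delta(t)Q(\xi)$ is a $\Lambda$-chain, I would use that the generating series in the counts of AL-paths are rational with denominators dividing $(1-t)^2\Delta(t)$, reflecting the characteristic polynomial of the monodromy on $H_*(\Sigma)$ together with the Poincar\'e--Lefschetz intersection data in fibers.

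For part (3), assuming $\kappa$ induces an isomorphism $H_1(M)/\mathrm{Torsion}\cong H_1(S^1)$ and $\langle[d\kappa],[K]\rangle=-1$, the orientation convention $[ST(\gamma^{\mathrm{irr}})]=p(\gamma^{\mathrm{irr}})[ST(K)]$ reduces the closed AL-path sum to a scalar multiple of $[ST(K)]$. Grouping closed AL-paths by their irreducible factor $\gamma_0$ and iteration multiplicity $k\geq 1$, and using $p(\gamma_0^k)=k\,p(\gamma_0)$ with the invariance of index and sign under iteration, one obtains
\[
\sum_{\gamma}(-1)^{\mathrm{ind}\,\gamma}\ve(\gamma)\,t^{p(\gamma)}\,[ST(\gamma^{\mathrm{irr}})]
= [ST(K)]\sum_{\gamma_0}(-1)^{\mathrm{ind}\,\gamma_0}\ve(\gamma_0)\,p(\gamma_0)\sum_{k\geq 1}t^{k\,p(\gamma_0)}.
\]
To identify this series with $t\zeta'_{\varphi}/\zeta_\varphi=\sum_{k\geq 1}L(\varphi^k)\,t^k$, I would apply the Lefschetz fixed point theorem to $\varphi^k$: closed AL-paths crossing a reference fiber are in sign-preserving bijection with fixed points of $\varphi^k$, an irreducible closed AL-path of period $p$ accounting for $p$ such fixed points weighted by $(-1)^{\mathrm{ind}}\ve$. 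Swapping the order of summation then yields the claimed equality. The main obstacle across the argument is the careful orientation and sign bookkeeping in the boundary computation of part (2), together with verifying local finiteness and the manifold-with-corners structure of the stratification in part (1); the identification in part (3) is thereafter a formal manipulation of rational generating functions.
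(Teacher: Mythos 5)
Theorem~\ref{thm:propagator} is stated in this paper as a citation from \cite{Wa2}; the present paper contains no proof of it, so there is no ``paper's own proof'' to compare against. Judged on its own terms, your outline for parts (1) and (2) follows the approach one would expect: stratify $\acalM_{K_2}(\xi)$ by combinatorial type, use the parametrized Morse--Smale condition to get smooth strata, compactify by adjoining degenerate limits, and compute $\partial Q(\xi)$ from the codimension-one faces (diagonal contribution assembling to $s_\xi^*(M)$ and limits accumulating on closed AL-paths giving the $ST(\gamma^\irr)$ terms), with interior faces cancelling by the gluing argument. That matches how the surrounding text uses the result.

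However, part (3) contains a real gap in the mechanism. You reduce the sum to
\[
[ST(K)]\sum_{\gamma_0\text{ irr.}}(-1)^{\mathrm{ind}\,\gamma_0}\ve(\gamma_0)\,p(\gamma_0)\sum_{k\geq 1}t^{k\,p(\gamma_0)},
\]
which is correct given the orientation convention $[ST(\gamma^\irr)]=p(\gamma^\irr)[ST(K)]$, and you want to identify this with $\sum_{n\geq 1}L(\varphi^n)t^n$. But the claim that ``closed AL-paths crossing a reference fiber are in sign-preserving bijection with fixed points of $\varphi^k$, an irreducible closed AL-path of period $p$ accounting for $p$ such fixed points'' is false as stated: closed AL-paths of a fiberwise gradient are not fixed points of $\varphi^k$, and in general $\varphi^k$ need not even have isolated fixed points (e.g.\ $\varphi=\mathrm{id}$). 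The identity one actually needs is algebraic, not geometric: if $A$ denotes the transfer matrix whose $(i,j)$-entry counts (with sign) AL-paths from the $i$-th critical point of $f|_\Sigma$ back to the $j$-th after one pass around $S^1$, then $A$ realizes the chain-level monodromy on the Morse complex of $f|_\Sigma$, and the Hopf trace formula gives $L(\varphi^n)=\mathrm{str}(A^n)=\sum_i(-1)^i\,\mathrm{tr}(A^n|_{C_i})$. The factor $p(\gamma_0)$ then enters because $\mathrm{str}(A^n)$ counts \emph{base-pointed} closed AL-paths of period $n$, and each irreducible class of period $d\,|\,n$ contributes $d$ choices of base point to $\gamma_0^{n/d}$; this is the content of \cite[Prop.~4.9]{Wa2}, which the present paper invokes in the proof of Lemma~\ref{lem:ST(V)}. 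You should replace the ``fixed-point bijection'' step with this trace argument on the Morse chain complex (and, relatedly, the rationality in part (2) with denominator $(1-t)^2\Delta(t)$ also comes from $(1-tA)^{-1}$, tying the two parts together).
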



\subsection{Coorientation of the strata in $Q(\xi)$}\label{ss:coori}

We recall the orientation convention for $Q(\xi)$ in Theorem~\ref{thm:propagator}. As in \cite{Wa2}, we give the orientations of the strata by coorientations in auxiliary spaces. 

The dimensions of strata in $Q(\xi)$ of AL-paths having vertical segments of index 0 or 2 degenerates in $\bConf_{K_2}(M)$. A nondegenerate (codimension 0) stratum $S$ in $Q(\xi)$ consists of AL-paths $\gamma=(\sigma_1,\sigma_2,\ldots,\sigma_n)$ satisfying one of the following conditions.
\begin{enumerate}
\item $n\geq 2$ and both $\sigma_1$ and $\sigma_n$ are horizontal.
\item $n=1$ and $\sigma_1$ is horizontal. 
\end{enumerate}

In the case (1), $\gamma$ may have several horizontal segments between critical loci of index 1. Each such horizontal segment is the transversal intersection of the descending manifold (locus) $\wcalD_p(\xi)$ of a critical locus $p$ and the ascending manifold (locus) $\wcalA_q(\xi)$ of another critical locus $q$. Such an intersection is called a {\it $1/1$-intersection} in \cite{HW}. A $1/1$-intersection has a sign which is determined by the coorientations of the descending and the ascending manifolds by the identity
\[ o^*_M(\wcalD_p(\xi))_x\wedge o^*_M(\wcalA_q(\xi))_x \sim \ve_x\cdot o(L)_x,
\quad x\in\wcalD_p(\xi)\cap \wcalA_q(\xi), \]
where $L$ is the level surface locus of $f$ including $x$ and $o(L)_x=\iota(-\xi_x)\,o(M)_x$. Let $\ve(\gamma)$ be the product of the signs of $1/1$-intersections on $\gamma$. Then we define the coorientation $o^*_{M^2}(S)_\gamma$ at generic point as
\[ \begin{split}
  o^*_{M^2}(S)_\gamma&=\ve(\gamma)\,o^*_M(\wcalD_p(\xi))_x\wedge o^*_M(\wcalA_q(\xi))_y\\
&\in \twedge^\bullet T^*_{(x,y,[\gamma])}\Conf_{K_2}(M)=\twedge^\bullet T^*_xM\otimes \twedge^\bullet T^*_yM.
\end{split} \]

In the case (2), suppose that $\gamma$ goes from $x\in M$ to $y\in M$. If $T_xM$ is spanned by an orthonormal basis $e_1,e_2,e_3$, then $T_{(x,y)}S$ is spanned by $e_1+Ae_1$, $e_2+Ae_2$, $e_3+Ae_3$, $-\xi_y$, where $A=d\Phi_{-\xi}^s:T_xM\to T_yM$. Let $dx_1,dx_2,dx_3\in T^*_xM$ be the dual basis for $e_1,e_2,e_3$ and let $A_*=(d\Phi_{-\xi}^s)_*:T_x^*M\to T_y^*M$ be the pushforward. Then we define
\[ o(S)_\gamma=(-df)_y\wedge (dx_1+A_*dx_1)\wedge (dx_2+A_*dx_2)\wedge (dx_3+A_*dx_3) \]
and 
\[ o^*_{M^2}(S)_\gamma=*\,o(S)_\gamma\in \twedge^\bullet T^*_xM\otimes \twedge^\bullet T^*_yM, \]
where $*$ is the Hodge star operator. 

\subsection{Fundamental chain of closed AL-paths}\label{ss:moduli_closed}

We define the 1-cycle
\[ Q'(\xi)=\sum_\gamma \gamma\in C_1(M;\widehat{\Lambda}) \]
in $M$, where the sum is over equivalence classes of all closed AL-paths $\gamma$ for $\xi$ considered as oriented 1-cycles. This is an infinite sum but is well-defined as a $\widehat{\Lambda}$-chain. The orientation of a closed AL-path $\gamma$ is given by $\ve(\gamma)$ times the downward orientation on $\gamma$.


\mysection{The invariants $\widehat{Z}_n$ and $\calQ$}{s:invariants}

\subsection{Multilinear form $\langle Q_1,\ldots,Q_{3n}\rangle_\Gamma$ and trace}\label{ss:mlinearform}

Let $\kappa:M\to S^1$ be a smooth fiber bundle with fiber a closed connected oriented 2-manifold. Fix a compact connected oriented 2-submanifold $\Sigma$ of $M$ without boundary such that the oriented bordism class of $\Sigma$ in $M$ corresponds to $[\kappa]$ via the canonical isomorphism $\Omega_2(M)=H_2(M)\cong H^1(M)$. Let $\Gamma$ be a labeled oriented 3-valent graph of degree $n$. 
\begin{enumerate}
\item If $e\in E^I(\Gamma)$, then let $\psi_e:\bConf_{\Gamma}(M)\to \bConf_{K_2}(M)$ denote the projection that gives the endpoints of $e$ together with the associated curve in $S^1$. Take a compact oriented 4-submanifold $F_e$ in $\bConf_{K_2}(M)$ with corners.
\item If $e\in E^\rho(\Gamma)$, then let $\psi_e:\bConf_{\Gamma}(M)\to M$ denote the projection that gives the unique endpoint of $e$ together with the associated curve in $S^1$. Take a compact oriented 1-submanifold $F_e$ in $M$ with boundary.
\end{enumerate}
Note that in both cases $F_i$ is of codimension 2. Then we define
\[ \langle F_1,F_2,\ldots,F_{3n}\rangle_\Gamma
=\bigcap_{e=1}^{3n} \psi_{e}^{-1}(F_e), \]
which gives a compact 0-dimensional submanifold in $\bConf_\Gamma(M)$ if the intersection is transversal. We equip each point $(x_1,x_2,\ldots,x_{2n};\gamma_1,\gamma_2,\ldots,\gamma_{3n})$ of $\langle F_1,F_2,\ldots,F_{3n}\rangle_\Gamma$ with a coorientation (a sign) in $\bConf_{\Gamma}(M)$ by
\[ \bigwedge_{e\in E(\Gamma)} \psi_e^*\,o^*_{\bConf_{K_2}(M)}(F_e)_{(x_e,y_e,\gamma_e)}\in \twedge^\bullet T_{(x_1,\ldots,x_{2n})}M^{2n}. \]
Here, we identify a neighborhood of a point in $\bConf_\Gamma(M)$ with its image of the projection in $M^{2n}$. The coorientation gives a sign as the sign of $\mu$ in the equation $\bigwedge_e\psi_e^*o^*_{\bConf_{K_2}(M)}(F_e)=\mu\, o(M^{2n})$, where $o(M^{2n})$ is the standard orientation of $M^{2n}$. By this, $\langle F_1,F_2,\ldots,F_{3n}\rangle_\Gamma$ represents a 0-chain in $\bConf_{\Gamma}(M)$. This can be extended to tuples of codimension 2 $\Q$-chains by multilinearity. We will denote the homology class of $\langle F_1,F_2,\ldots,F_{3n}\rangle_\Gamma$ (integer) by the same notation.

We extend the form $\langle\cdot,\ldots,\cdot\rangle_\Gamma$ to tuples of codimemsion 2 $\Lambda$-chains $F_k'$ in $\bConf_{K_2}(M)$ or in $M$ as follows. When $k\in E^I(\Gamma)$, suppose that $F_k'$ is of the form $\sum_{\lambda_k=1}^{N_k} \mu_{\lambda_k}^{(k)} \sigma_{\lambda_k}^{(k)}$, where $\mu_{\lambda_k}^{(k)}\in \Lambda$ and $\sigma_{\lambda_k}^{(k)}$ is a compact oriented smooth 4-submanifold in $\bConf_{K_2}(M)[0]$ with corners, where $\bConf_{K_2}(M)[0]$ is the subspace of $\bConf_{K_2}(M)$ consisting of $(x_1,x_2,\gamma_{12})$ such that $\gamma_{12}$ is represented by an arc in $M$ whose algebraic intersection with $\Sigma$ is $0$. When $k\in E^\rho(\Gamma)$, suppose that $F_k'$ is of the form $\sum_{\lambda_k=1}^{N_k} \mu_{\lambda_k}^{(k)} \sigma_{\lambda_k}^{(k)}$, where $\sigma_{\lambda_k}^{(k)}$ is a piecewise smooth path in $M$ transversal to $\Sigma$ and $\mu_{\lambda_k}^{(k)}(t)=\alpha_{\lambda_k}^{(k)}t^{\sigma_{\lambda_k}^{(k)}\cdot \Sigma}$, $\alpha_{\lambda_k}^{(k)}\in \Q$. Then we define
\[ \begin{split}
 \langle F_1',F_2',\ldots,F_{3n}'\rangle_\Gamma
&=\sum_{\lambda_1,\lambda_2,\ldots,\lambda_{3n}}
\mu_{\lambda_1}^{(1)}(t_1)\mu_{\lambda_2}^{(2)}(t_2)\cdots\mu_{\lambda_{3n}}^{(3n)}(t_{3n})
\langle \sigma_{\lambda_1}^{(1)},\sigma_{\lambda_2}^{(2)},\ldots,\sigma_{\lambda_{3n}}^{(3n)}\rangle_\Gamma\\
&\in C_0(\bConf_{\Gamma}(M))\otimes\Q,
\end{split} \]
which can be considered as a 0-chain in $\bConf_{2n}(M)$ with coefficients in $\Q[t_1^{\pm 1},t_2^{\pm 1},\ldots, t_{3n}^{\pm 1}]$. This is multilinear by definition. 

Next, we extend the form $\langle\cdot,\ldots,\cdot\rangle_\Gamma$ to tuples of codimension 2 $\widehat{\Lambda}$-chains in $\bConf_{K_2}(M)$ or $M$ as follows. Let $Q_1,Q_2,\ldots,Q_{3n}$ be codimension 2 $\widehat{\Lambda}$-chains in $\bConf_{K_2}(M)$ or $M$ depending on whether the corresponding edge is not a self-loop or a self-loop. Then there exist $p_1,p_2,\ldots,p_{3n}\in \Lambda$ such that $p_iQ_i$ is a $\Lambda$-chain for each $i$. Then we define
\[ \begin{split}
  \langle Q_1,Q_2,\ldots,Q_{3n}\rangle_\Gamma
&=\langle p_1Q_1,p_2Q_2,\ldots,p_{3n}Q_{3n}\rangle_\Gamma
\,p_1(t_1)^{-1} p_2(t_2)^{-1}\cdots  p_{3n}(t_{3n})^{-1}\\
&\in C_0(\bConf_{\Gamma}(M))\otimes_{\Lambda_\Gamma}\widehat{\Lambda}_\Gamma,
\end{split} \]
which can be considered as a 0-chain in $\bConf_{2n}(M)$ with coefficients in $\Q(t_1)\otimes\cdots\otimes \Q(t_{3n})$. This does not depend on the choices of $p_1,\ldots,p_{3n}$ and this is multilinear by definition. Note that the multilinear form $\langle\cdot,\ldots,\cdot\rangle_\Gamma$ depends on the choice of $\Sigma$. 

We define a $\Q$-linear map $\Tr_\Gamma:\Q(t_1)\otimes\cdots\otimes\Q(t_{3n})\to \calA_n(\widehat{\Lambda})$ by setting
\[ \Tr_\Gamma
\Bigl(\frac{q_1(t_1)}{p_1(t_1)}\otimes\frac{q_2(t_2)}{p_2(t_2)}\otimes\cdots
\otimes \frac{q_{3n}(t_{3n})}{p_{3n}(t_{3n})}\Bigr)
=\Bigl[\Gamma\Bigl(\frac{q_1(t)}{p_1(t)},\frac{q_2(t)}{p_2(t)},\ldots, \frac{q_{3n}(t)}{p_{3n}(t)}\Bigr)\Bigr].\]
This induces a linear map 
\[ \Tr_\Gamma:C_0(\bConf_\Gamma(M))\otimes_{\Lambda_\Gamma}\widehat{\Lambda}_\Gamma \to C_0(\bConf_{2n}(M))\otimes_\Q\calA_n(\widehat{\Lambda}). \]

\begin{Rem}
For the theta-graph $\Theta$ in (\ref{eq:theta}), the definition of $\langle\cdot,\cdot,\cdot\rangle_\Theta$ given above is equivalent to the equivariant triple intersection $\langle\cdot,\cdot,\cdot\rangle_e$ defined in \cite{Les3}. Take three 4-dimensional $\Lambda$-chains $F_X,F_Y,F_Z$ in $\bConf_{K_2}(M)$ and represent as linear combinations of 4-chains in $\bConf_{K_2}(M)[0]$ as follows.
\[ F_X=\sum_{\ell\in\Z}F_X^{(\ell)}t^\ell,\quad
F_Y=\sum_{\ell'\in\Z}F_Y^{(\ell')}t^{\ell'},\quad
F_Z=\sum_{\ell''\in\Z}F_Z^{(\ell'')}t^{\ell''}, \]
where $F_X^{(\ell)},F_Y^{(\ell')},F_Z^{(\ell'')}$ are 4-submanifold of $\bConf_{K_2}(M)[0]$ that are mutually transversal and the sums are finite. Then we have
\[\begin{split}
\langle F_X,F_Y,F_Z\rangle_\Theta &= \sum_{\ell,\ell',\ell''}\bigl\langle F_X^{(\ell)},F_Y^{(\ell')},F_Z^{(\ell'')}\bigr\rangle_{\bConf_{K_2}(M)[0]}\, \Tr_\Theta(t_1^{\ell}\otimes t_2^{\ell'}\otimes t_3^{\ell''})\\
&=\sum_{\ell,\ell',\ell''}\bigl\langle F_X^{(\ell)},F_Y^{(\ell')},F_Z^{(\ell'')}\bigr\rangle_{\bConf_{K_2}(M)[0]}\, \Tr_\Theta(1\otimes t_2^{\ell'-\ell}\otimes t_3^{\ell''-\ell})\\
&=\sum_{\ell,i,j\in\Z}\bigl\langle F_X^{(\ell)},F_Y^{(\ell+i)},F_Z^{(\ell+j)}\bigr\rangle_{\bConf_{K_2}(M)[0]}\, \Tr_\Theta(1\otimes t_2^i\otimes t_3^j)\\
&=\sum_{i,j\in\Z}\bigl\langle F_X,t^{-i}F_Y,t^{-j}F_Z\bigr\rangle_{\bConf_{K_2}(M)}\, \Tr_\Theta(1\otimes t_2^i\otimes t_3^j).\\
\end{split}\]
This agrees with Lescop's equivariant triple intersection in \cite{Les3}. 
\end{Rem}
\subsection{Invariant $\wZ_n$ of oriented fiberwise Morse functions}\label{ss:def_Z}

\subsubsection{Definition of $Z_n$}
Let $\calE(\kappa)$ denote the set of concordance classes of oriented fiberwise Morse functions for a fibration $\kappa:M\to S^1$. Let $\kappa_1,\kappa_2,\ldots,\kappa_{3n}:M\to S^1$ be fibrations isotopic to $\kappa$. Let $f_i:M\to \R$, $i=1,2,\ldots,3n$, be oriented fiberwise Morse functions for $\kappa_i$ such that $(\kappa_i,f_i)$ is concordant to a pair isotopic to $(\kappa,f)$. Let $\xi_i$ be the fiberwise gradient of $f_i$. Let $Q(\xi_i)$ be the equivariant propagator in Theorem~\ref{thm:propagator} for $\xi_i$ and let $Q'(\xi_i)$ be as in \S\ref{ss:moduli_closed}. Choosing these data generically, we define
\[ \begin{split}
Z_n&=Z_n(\xi_1,\xi_2,\ldots,\xi_{3n})\\
&=\sum_\Gamma \Tr_\Gamma\langle Q^\circ(\xi_1),Q^\circ(\xi_2),\ldots,Q^\circ(\xi_{3n}) \rangle_\Gamma
\in C_0(\bConf_{2n}(M))\otimes_\Q\calA_n(\widehat{\Lambda}),
\end{split} \]
where $Q^\circ(\xi_i)$ in the term for $\Gamma$ is $Q(\xi_i)$ or $Q'(\xi_i)$ depending on whether the corresponding edge in $\Gamma$ is not a self-loop or a self-loop and the sum is over all labeled 3-valent graphs of degree $n$ for all possible edge orientations. We also denote by $Z_n$ the homology class of $Z_n$ in $H_0(\bConf_{2n}(M))\otimes_\Q\calA_n(\widehat{\Lambda})=\calA_n(\widehat{\Lambda})$. 

\subsubsection{Spin structure and canonical framing}
We need a correction term to turn the homology class of $Z_n$ into an invariant under a deformation. Choose a spin structure $\mathfrak{s}$ on $M$. Recall that Rohlin's $\mu$-invariant for $(M,\mathfrak{s})$ is defined by
\[ \mu(M,\mathfrak{s})=\sign\,{W}\,\, (\mbox{mod 16}), \]
where $W$ is a compact spin 4-manifold that spin bounds $(M,\mathfrak{s})$. Kirby--Melvin's $\lambda$-invariant (\cite{KM}) of $(M,\mathfrak{s})$ is given by
\[ \lambda(M,\mathfrak{s})=2(1+r(M))+\mu(M,\mathfrak{s})\,\,\mbox{(mod 4)}, \]
where $r(M)=\mathrm{rank}\,(H_1(M)\otimes \Z_2)$. In \cite{KM}, it is shown that if $W$ is a simply-connected spin 4-manifold that spin bounds $(M,\mathfrak{s})$, for example constructed by attaching 2-handles to $D^4$ along an even framed link, then $\chi(W)\equiv 1+r(M)$ (mod 2). In \cite{KM}, the invariant $\lambda$ was used to determine a canonical stable framing of $TM\oplus \ve^1$ that is compatible with $\mathfrak{s}$. For simplicity, we assume the following condition.
\begin{Assum}\label{assum:mu0}
$\mu(M,\mathfrak{s})=0$ (mod 16).
\end{Assum}

If $\mu(M,\frak{s})\neq 0$ (mod 16), the correction term may still be defined as follows. Consider the 16-fold cyclic covering $M(16)\to M$ which is the pullback of $\kappa$ by the 16-fold covering $S^1\to S^1$. Then $\frak{s}$ induces a natural spin structure $\mathfrak{s}(16)$ on $M(16)$ and we have $\mu(M(16),\mathfrak{s}(16))=0$ (mod 16). Now define the correction term for $(M,\mathfrak{s})$ to be $\frac{1}{16}$ of that for $(M(16),\mathfrak{s}(16))$. This definition is consistent when $\mu(M,\mathfrak{s})=0$ (mod 16).

Under Assumption~\ref{assum:mu0}, one has $\lambda(M,\mathfrak{s})\equiv 0$ or $2$ (mod 4), depending on whether $r(M)$ is odd or even. If $\lambda(M,\mathfrak{s})\equiv 0$ (mod 4), then by \cite[p.97--98]{KM} there is a unique stable framing $\phi$ such that 
\[ \sigma(\phi)=0,\quad d(\phi)=0, \]
where $\sigma$ is the signature defect $\sigma(\phi)=p_1(W;\phi)-3\,\sign\,W$ for $W$ as above and $d(\phi)$ is the degree of the map $M\to S^3$ which gives the direction of the framing $\nu$ of $\ve^1$ considered with respect to the trivialization $\phi$. If $\lambda(M,\mathfrak{s})\equiv 2$ (mod 4), then by \cite[p.97--98]{KM} there is a unique stable framing $\phi$ such that
\[ \sigma(\phi)=0,\quad d(\phi)=1. \]
The latter is the same situation as $\Z$-homology sphere with canonical spin structure discussed in \cite{Wa1}. 

\begin{Lem}\label{lem:cframing}
Under Assumption~\ref{assum:mu0}, we have the following.
\begin{enumerate}
\item If $r(M)$ is odd, then there is a spin 4-manifold $W$ that spin bounds $(M,\mathfrak{s})$ and a unique framing $\varphi$ of $TM$ such that 
\[ \sign\,{W}=0,\quad \chi(W)=0,\quad p_1(W;\varphi\oplus\nu)=0.\]
Hence by \cite[Lemma~2.3]{KM}, the stable framing $\phi=\varphi\oplus\nu$ extends to a framing of $TW$ that is compatible with the spin structure.
\item If $r(M)$ is even, then there is a spin 4-manifold $W$ that spin bounds $(M,\mathfrak{s})$ and a unique stable framing $\phi$ of $TM\oplus\ve^1$ such that 
\[ \sign\,{W}=0,\quad \chi(W)=1,\quad p_1(W;\phi)=0.\]
Hence by \cite[Lemma~2.3]{KM}, the stable framing $\phi$ extends to a framing of $TW$ that is compatible with the spin structure.
\end{enumerate}
\end{Lem}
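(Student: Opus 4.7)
My plan breaks into three steps: construct $W$ with the required $\sign$ and $\chi$; identify the unique framing with the required invariants; invoke Kirby--Melvin's extension lemma.

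Since $\Omega^{\Spin}_3=0$, there exists a compact spin $4$-manifold $W_0$ with $\partial W_0=(M,\mathfrak{s})$. Rohlin's theorem and Assumption~\ref{assum:mu0} give $\sign W_0\equiv\mu(M,\mathfrak{s})\equiv 0\pmod{16}$, so interior connect-sums with copies of $K3$ or $\overline{K3}$ (each shifting $(\sign,\chi)$ by $(\mp 16,22)$) reduce the signature to zero. Further connect-sum with $S^2\times S^2$ (shifting $(\sign,\chi)$ by $(0,2)$) and its inverse---interior $2$-surgery on an $S^2\times\{\mathrm{pt}\}$ inside an $S^2\times S^2$-summand (shifting $(\sign,\chi)$ by $(0,-2)$)---then allow us to move $\chi$ up or down in steps of $2$ without touching $\sign$ or the boundary. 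The parity of $\chi(W)\pmod 2$ is an invariant of $(M,\mathfrak{s})$ alone: for any two spin bounds $W,W'$, the closed spin $4$-manifold $W\cup_M(-W')$ satisfies $\chi(W)+\chi(W')=\chi(W\cup_M(-W'))$, which is even by Wu's formula $w_2\cdot w_2\equiv\chi\pmod 2$ applied in the spin case. By the Kirby--Melvin formula recalled in the excerpt, this common parity equals $1+r(M)\pmod 2$; hence we can arrange $\chi(W)=0$ when $r(M)$ is odd and $\chi(W)=1$ when $r(M)$ is even.

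For the framing, the set of stable framings of $TM\oplus\ve^1$ forms a torsor over $[M,SO]$ on which the pair of invariants $(\sigma,d)$ is injective, by the argument of \cite[p.97--98]{KM} recalled in the excerpt. We therefore have a unique stable framing $\phi$ with $\sigma(\phi)=0$ and the prescribed value of $d(\phi)$ ($0$ in case~(1), $1$ in case~(2)). In case~(1), the condition $d(\phi)=0$ further forces $\phi$ to desuspend as $\varphi\oplus\nu$ for a genuine framing $\varphi$ of $TM$. The identity $\sigma(\phi)=p_1(W;\phi)-3\sign W$ combined with $\sign W=0$ then forces $p_1(W;\phi)=0$.

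With $(\sign W,\chi(W),p_1(W;\phi))$ equal to $(0,0,0)$ in case~(1) or $(0,1,0)$ in case~(2), the desired extension of $\phi$ over $W$ compatibly with the spin structure follows directly from \cite[Lemma~2.3]{KM}. The main obstacle is the simultaneous control of $\sign$ and $\chi$ in Step~1: killing the signature by $K3$-type connect-sums inflates $\chi$, so one must subsequently deflate $\chi$ without disturbing the boundary or the signature, and this is exactly what the interior $2$-surgeries on $S^2\times S^2$-summands accomplish, provided enough such summands have been introduced by preliminary stabilization.
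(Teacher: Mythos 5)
Your Steps 2 and 3, and the overall architecture (Rohlin plus Assumption~\ref{assum:mu0} to get $\sign W_0 \equiv 0 \pmod{16}$, kill the signature with $\#(\pm K3)$, adjust $\chi$, then invoke the uniqueness of the Kirby--Melvin canonical framing and \cite[Lemma~2.3]{KM}), match the paper; your Wu-formula argument that $\chi(W)\bmod 2$ depends only on $(M,\mathfrak{s})$ is a clean self-contained justification of a point the paper takes from the simply-connected discussion.

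The gap is in the operation you use to lower $\chi$. You propose (a) $\#(S^2\times S^2)$ to raise $\chi$ by $2$, and (b) interior surgery on $S^2\times\{\mathrm{pt}\}$ inside an $S^2\times S^2$-summand to lower it by $2$. But (b) is literally the inverse of (a): removing a tubular neighborhood $S^2\times D^2$ of $S^2\times\{\mathrm{pt}\}\subset S^2\times S^2$ leaves the other $S^2\times D^2$, and gluing in $D^3\times S^1$ reassembles $S^4 = \partial(D^3\times D^2)=(S^2\times D^2)\cup_{S^2\times S^1}(D^3\times S^1)$, so the summand dissolves and you recover the manifold before the stabilization. Thus (a) followed by (b) is a no-op, and (b) alone cannot be performed unless an $S^2\times S^2$-summand is already present --- which neither a generic spin bound of $M$ nor the added $K3$'s provide (each $K3$ is irreducible). "Preliminary stabilization'' therefore only lets you undo $\chi$-increases you yourself introduced; after killing $\sign W$ via $\#(\pm K3)$ you have inflated $\chi$ by $22$ per copy and have no way to come back down. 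The fix is simple: connect-sum with a closed spin $4$-manifold having $\sign=0$ and $\chi=0$, which lowers $\chi$ by $2$ outright. The paper uses $T^4$, so $\chi(W\# T^4)=\chi(W)-2$ and $\sign(W\# T^4)=\sign(W)$; $\#(S^1\times S^3)$ would work equally well. With that substitution your argument goes through.
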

\begin{proof}
We only give a proof for case (1) since the proof for case (2) is similar. By Assumption~\ref{assum:mu0}, there is a spin 4-manifold $W$ that spin bounds $(M,\mathfrak{s})$ with $\sign\,{W}\equiv 0$ (mod 16). Since $r(M)$ is odd, such a $W$ has even Euler characteristic. Let $K3$ denote the Kummer $K3$ surface and $T^4$ be the 4-torus, both spinnable 4-manifolds. By connected sums of several $K3$ or $-K3$ to $W$, we may assume that $\sign\,W=0$, since $\sign\,{K3}=-16$. Note that for a 4-manifold $X$, one has $\chi(X\# K3)=\chi(X\# (-K3))=\chi(X)+22$, $\chi(X\# T^4)=\chi(X)-2$. Hence we may assume that $\chi(W)=0$ by connect summing several $K3\#(-K3)$ and $T^4$, without changing the signature. 

Now choose the canonical stable framing $\phi$ of $TM\oplus\ve^1$ with $\sigma(\phi)=0$ and $d(\phi)=0$, which is uniquely determined. For the $W$ as above, we have $p_1(W;\phi)=\sigma(\phi)+3\,\sign\,W=0$. By $d(\phi)=0$, there is a framing $\varphi$ of $TM$ such that $\phi$ is homotopic to $\varphi\oplus \nu$. This completes the proof.
\end{proof}

\subsubsection{Anomaly correction term and $\wZ_n$}
Choose $W$ and $\phi$ as in Lemma~\ref{lem:cframing}. When $r(M)$ is odd, one can find a 4-framing of $TW$ and its sub 3-framing $\tau_W^v$ of $TW$ that extends $\varphi$. The 3-framing $\tau_W^v$ spans a rank 3 subbundle $T^vW$ of $TW$. We extend $s_{\xi_i}^*:M\to TM$ to a map $\rho_i:W\to T^vW$, which restricts to a section on $W\setminus \partial W$. We choose $\vec{\rho}_W=(\rho_1,\rho_2,\ldots,\rho_{3n})$ generic as in \cite[\S{2.8.3}]{Wa1} so that we may define
\[ Z_n^\anomaly(\vec{\rho}_W)=\sum_\Gamma \#\calM_\Gamma^\loc(-\vec{\rho}_W)\,[\Gamma(1,1,\ldots,1)]\in\calA_n(\widehat{\Lambda}). \]
See \cite[Definition~2.7]{Wa1} for the definition of $\calM_\Gamma^\loc(-\vec{\rho}_W)$. Roughly, $\calM_\Gamma^\loc(-\vec{\rho}_W)$ is the moduli space of graphs in fibers in the vector bundle $T^vW$ whose $i$-th edge is parallel to $\rho_i$. One can prove that $Z_n^\anomaly(\vec{\rho}_W)$ does not depend on the choices of $W$ as in Lemma~\ref{lem:cframing} and of the extension $\vec{\rho}_W$ (\cite[Proposition~2.12]{Wa1}). Now we define
\[ \widehat{Z}_n=\widehat{Z}_n(\xi_1,\ldots,\xi_{3n},\mathfrak{s})=Z_n(\xi_1,\ldots,\xi_{3n})-Z_n^\anomaly(\vec{\rho}_W)\in\calA_n(\widehat{\Lambda}). \]
When $r(M)$ is even, let $\phi$ be the canonical stable framing of $TM\oplus \ve^1$ as in Lemma~\ref{lem:cframing} (2). This is in a similar situation as the case of $\Z$-homology spheres with canonical spin structure considered in \cite{Wa1}. The stable framing $\phi$ is obtained from the stabilization $\varphi\oplus \nu$ of an honest framing $\varphi$ of $T(M\setminus\infty)$ ($\infty\in M$ : base point) by modifying it on a neighborhood of $\infty$ to a fixed stable framing near $\infty$. Then we may also define the correction term $Z_n^\anomaly(\vec{\rho}_W)$ and $\widehat{Z}_n$ as above. See \cite[\S{2.8.1}]{Wa1} for detail. 
 
\begin{Thm}\label{thm:main}
$\widehat{Z}_n$ is an invariant of $(M,\mathfrak{s},[\kappa],[f])$, where
\begin{enumerate}
\item $\mathfrak{s}$ is a spin structure on $M$,
\item $[\kappa]\in H^1(M)$ is the homotopy class of a fibration $\kappa:M\to S^1$,
\item $[f]\in \calE(\kappa)$ is the concordance class of an oriented fiberwise Morse function $f:M\to \R$.
\end{enumerate}
\end{Thm}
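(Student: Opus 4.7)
The plan is a bifurcation analysis along a generic one-parameter family of oriented fiberwise generalized Morse functions. Since a concordance $\wf=\{f_s\}_{s\in[0,1]}$ between two oriented fiberwise Morse functions gives such a family on $M\times[0,1]\to S^1\times[0,1]$, I would interpolate the $3n$-tuple of fiberwise gradients to $\vec\xi^{\,s}=(\xi_1^s,\ldots,\xi_{3n}^s)$ and show that the map $s\mapsto \wZ_n(\xi_1^s,\ldots,\xi_{3n}^s,\mathfrak{s})$ is locally constant on the complement of a finite set of codimension-one events, and that the jumps of $Z_n$ at those events are exactly compensated by the jumps of $Z_n^{\anomaly}(\vec\rho_W)$ along an extension of the family to the spin bounding 4-manifold $W$ of Lemma~\ref{lem:cframing}.

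First I would handle independence of the auxiliary generic data while the $\xi_i$'s are fixed. Two generic representatives of $Q(\xi_i)$ or $Q'(\xi_i)$ within the same $\widehat{\Lambda}$-chain class differ by a 5-dimensional $\widehat{\Lambda}$-cobordism in $\bConf_{K_2}(M)$ or in $M$. By a Stokes-type argument, the resulting change in $Z_n$ decomposes into contributions from interior boundary faces, hidden faces of $\bConf_\Gamma(M)$ where three or more configuration points collide, and principal faces where two points collide. The interior contributions cancel using the prescription for $\partial Q(\xi_i)$ given in Theorem~\ref{thm:propagator}(2). The hidden-face contributions vanish after summing over labeled trivalent graphs $\Gamma$ by the AS and IHX relations in $\calA_n(\widehat{\Lambda})$, exactly as in the classical Chern--Simons perturbation theory for homology spheres. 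The principal-face contributions are precisely what is subtracted by $Z_n^{\anomaly}$, this being the standard role of the anomaly correction.

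Next I would analyze the bifurcations along a generic concordance. In the Cerf-theoretic stratification of fiberwise generalized Morse functions on $M\times[0,1]$, the codimension-one events consist of independent-trajectory crossings in the sense of Hatcher--Wagoner, level-exchange points in the Cerf graphic moving through non-transversality, and birth-death loci. Independent-trajectory and level-exchange bifurcations change individual terms $\langle Q^\circ(\xi_1^s),\ldots,Q^\circ(\xi_{3n}^s)\rangle_\Gamma$ by boundary integrals that reassemble, after summing over $\Gamma$ and applying AS, IHX, and Orientation reversal, into zero in $\calA_n(\widehat{\Lambda})$. The Holonomy relation of Definition~\ref{def:jacobi} handles the shift in the $t$-weighting that occurs when an AL-path acquires or loses a segment encircling the $S^1$ direction. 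The concordance hypothesis forbids birth-death loci whose projection to $S^1\times[0,1]$ is nullhomotopic, so that the only birth-deaths that survive have controllable homological bookkeeping that matches them to jumps of $Z_n^{\anomaly}$.

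The main obstacle will be verifying this last matching in detail. At a birth-death event, $\acalM_{K_2}(\xi_s)$ acquires a new boundary stratum whose local model is essentially the moduli for a quadratic-cubic singularity; I would identify this local model with the local graph moduli $\calM_\Gamma^\loc(-\vec\rho_W)$ used in \cite{Wa1} to define the anomaly, so that a simultaneous extension of $\vec\rho_W$ across the family forces the two jumps to cancel term by term. The spin structure $\mathfrak{s}$ enters precisely here, because it pins down through Lemma~\ref{lem:cframing} the stable framing of $TM\oplus\ve^1$ up to homotopy and thereby fixes the sign of each birth-death contribution, so that $\wZ_n$ depends on $\mathfrak{s}$ but not on the framing choice. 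Combined with the independence and bifurcation steps, this yields $\wZ_n(\vec\xi^{\,0},\mathfrak{s}) = \wZ_n(\vec\xi^{\,1},\mathfrak{s})$; running the same argument over generic paths in the space of isotopies of $\kappa$ and of concordances of $f$ then extends this to the full invariance claim.
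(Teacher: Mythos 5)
Your overall strategy --- bifurcation analysis along a generic concordance, interpolation of the $3n$ gradients, Stokes argument on the resulting one-parameter family of propagators --- is indeed the architecture of the paper's proof (Lemmas~\ref{lem:arrange-bd} through~\ref{lem:bd}). However, two of your key mechanisms are wrong, not just imprecise.

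First, you have the face bookkeeping reversed. You write that hidden faces vanish by AS/IHX and that principal faces are ``precisely what is subtracted by $Z_n^\anomaly$.'' This is backwards. In the paper's Lemma~\ref{lem:nobifurcations} (and in the general Chern--Simons scheme), the contributions from an edge $e$ of $\Gamma$ collapsing --- the principal faces --- cancel against each other by IHX after summing over labeled graphs. The proper hidden faces (a proper subgraph $T_1$ of a horizontal component collapsing) vanish by dimension counts and by the bivalent-vertex involution (Kontsevich's lemma), and only the extreme hidden face where the entire graph collapses produces $Z_n^\anomaly(\vec\xi(J))$, which is then cancelled by the jump in the correction term. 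If you tried to make the computation go through with principal faces $\to$ anomaly and hidden faces $\to$ IHX, it would fail: the IHX cancellation is exactly the one that pairs up the two collapsing-edge configurations and has nothing to do with the anomalous stratum.

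Second, and more seriously, your birth-death mechanism is wrong. You propose to identify the new boundary stratum of $\acalM_{K_2}(\xi_s)$ at a birth-death with the local graph moduli of \cite{Wa1} and to match its contribution term-by-term against a jump in $Z_n^\anomaly(\vec\rho_W)$. But in the paper's Lemma~\ref{lem:bd} the anomaly correction term does \emph{not} jump at a generic birth-death parameter (this is a dimensional-transversality statement about the restriction of $Q(\widetilde\xi_J)$ at $s=s_0$, not a matching). The cancellation of the new AL-graphs whose first edge visits the newborn index-$1$ locus $\alpha$ comes instead from a geometric pairing: the descending manifold locus $\wcalD_\alpha\cup\wcalD_\beta$ can be made to form a family of thin half-disks, and any $1/1$-intersection of $\wcalD_\alpha(\xi_{s_0+\ve})$ with an ascending locus along one side of the half-disk is accompanied by an oppositely-signed intersection on the other side, producing a sign-cancelling companion AL-graph $I'$. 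There is no contribution to cancel against the anomaly at all; the sum over such AL-graphs already vanishes on its own. Your approach as stated would have you looking for a jump in $Z_n^\anomaly$ that isn't there. You are also missing the separate step (the paper's Lemma~\ref{lem:indep_sigma}) showing independence of the auxiliary surface $\Sigma$, which is where the Holonomy relation is actually put to work; your sentence about Holonomy gestures in the right direction but is not attached to a concrete argument.
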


We will denote $Z_n$ and $\wZ_n$ by $Z_n(M,[\kappa],[f])$ and $\widehat{Z}_n(M,\mathfrak{s},[\kappa],[f])$ respectively. Proof of Theorem~\ref{thm:main} is given in \S\ref{s:invariance}. 

According to a result of Laudenbach--Blank \cite{LB}, if the nonsingular closed 1-forms $d\kappa_1$ and $d\kappa_2$ are cohomologous, then they are isotopic. By integrating the 1-parameter family of closed 1-forms, one sees that $\kappa_1$ and $\kappa_2$ are isotopic to each other. 

\begin{Rem}
One can remove the dependence of $\wZ_n$ on spin structures as follows. Recall that for a compact 3-manifold $M$, the number of all spin structures on $M$ is $|H_1(M;\Z_2)|$, which is finite (\cite[Lemma]{Mil}). One may consider the sum
\[ \wZ_n(M,[\kappa],[f])=\frac{1}{|H_1(M;\Z_2)|}\sum_{\mathfrak{s}\in\Spin(M)}\wZ_n(M,\mathfrak{s},[\kappa],[f]) \]
over all spin structures on $M$. 
\end{Rem}


\subsection{$Z_n$ and the generating function of counts of AL-graphs}\label{ss:moduli_hori}

Let $f_1,f_2,\ldots,f_{3n}$ and $\xi_1,\xi_2,\ldots,\xi_{3n}$ be as in \S\ref{ss:def_Z}. Let $\Gamma$ be a labeled trivalent graph of degree $n$.
\begin{Def}
Let $\Sigma=\kappa^{-1}(0)$. Suppose that no $1/1$-intersections for $\xi_i$ is on $\Sigma$. We define $\acalM_{\Gamma(\vec{k})}(\Sigma;\xi_1,\xi_2,\ldots,\xi_{3n})$, $\vec{k}=(k_1,k_2,\ldots,k_{3n})$, as the set of piecewise smooth maps $I:\Gamma\to M$ such that
\begin{enumerate}
\item the restriction of $I$ to the $i$-th edge is an AL-path of $\xi_i$,
\item the algebraic intersection of the restriction of $I$ to the $i$-th edge $e_i$ with $\Sigma$ is $k_i$. 
\end{enumerate}
We call such maps {\it AL-graphs} for $(\Sigma;\xi_1,\xi_2,\ldots,\xi_{3n})$ of type $\vec{k}$. We define a topology on $\acalM_{\Gamma(\vec{k})}(\Sigma;\xi_1,\xi_2,\ldots,\xi_{3n})$ as the transversal intersection of smooth submanifolds of $\bConf_\Gamma(M)$, as in \S\ref{ss:mlinearform}. 
\end{Def}
Here the condition (2) implies that $\kappa\circ I:\Gamma\to S^1$ represents the cocycle $\phi(i)=k_i$, $i\in E(\Gamma)$. Let $V\subset \Gamma$ be the union of the preimages of all the vertical segments. Let $H:\overline{\Gamma\setminus V}\to M$ be the restriction of $I$ to the closure of $\Gamma\setminus V$ in $\Gamma$. Then $H$ consists of finitely many ``horizontal'' components each of which consists only of horizontal segments in AL-paths. We say that a component of $H$ is of {\it generic type} if it is a unitrivalent graph whose univalent vertices are mapped by $I$ to critical loci of index 1. 

\begin{figure}
\fig{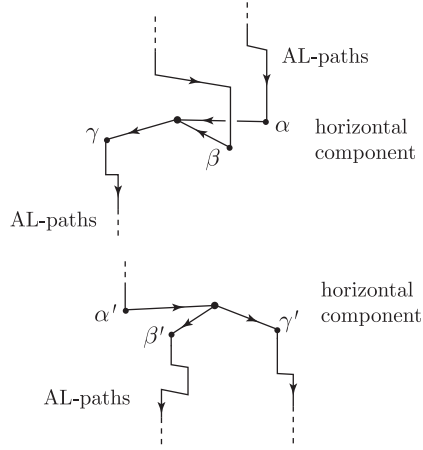}
\caption{An AL-graph of generic type for $\Theta$-graph}\label{fig:AL-graph-generic}
\end{figure}

\begin{Lem}\label{lem:transversality}
For generic choices of $\kappa_1,\kappa_2,\ldots,\kappa_{3n},\xi_1,\xi_2,\ldots,\xi_{3n}$, the moduli space $\acalM_{\Gamma(\vec{k})}(\Sigma;\xi_1,\ldots,\xi_{3n})$ is a compact oriented 0-dimensional manifold for all $\vec{k}$ and for all labeled oriented 3-valent graph $\Gamma$ of degree $n$ simultaneously. 
\end{Lem}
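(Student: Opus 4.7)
The plan is to realize $\acalM_{\Gamma(\vec{k})}(\Sigma;\xi_1,\ldots,\xi_{3n})$ as the $0$-dimensional transverse intersection
\[
\bigcap_{e\in E(\Gamma)}\psi_e^{-1}(F_e)\;\cap\;\bConf_\Gamma(M)[\vec{k}],
\]
where $F_e$ denotes $Q(\xi_e)$ or $Q'(\xi_e)$ according to whether $e\in E^I(\Gamma)$ or $e\in E^\rho(\Gamma)$, and $\bConf_\Gamma(M)[\vec{k}]$ is the component of $\bConf_\Gamma(M)$ labeled by the cocycle $\vec{k}\in H^1(\Gamma;\Z)$. Parametric transversality over the space of Morse data, upgraded to a simultaneous statement over all $(\Gamma,\vec{k})$ by a Baire argument, together with compactness supplied by Theorem~\ref{thm:propagator}(1), will then yield the lemma.

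The dimension count is immediate: $\bConf_\Gamma(M)$ has dimension $3\cdot 2n=6n$, and each $F_e$ has codimension $2$ (being a $4$-chain in the $6$-dimensional $\bConf_{K_2}(M)$ or a $1$-chain in $M$), so each $\psi_e^{-1}(F_e)$ has codimension $2$ in $\bConf_\Gamma(M)$; intersecting $3n=|E(\Gamma)|$ of them transversely gives a $0$-dimensional locus. At each isolated intersection point the orientation is supplied by the coorientation formulas of \S\ref{ss:coori} on each stratum of $Q(\xi_e)$ together with the multilinear sign rule of \S\ref{ss:mlinearform}.

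For transversality itself, I would run a Sard--Smale argument on the space $\calF^{3n}$ of $3n$-tuples of oriented fiberwise Morse data $(f_i,\xi_i,g_i)$, together with generic fibrations $\kappa_i$ close to $\kappa$. The essential feature is that each $\xi_i$ enters only into the chain $F_e$ assigned to the $i$-th edge, so the $3n$ propagators can be deformed independently. Stratifying each $\bacalM_{K_2}(\xi_i)$ by the combinatorial type of its AL-paths (the number, indices, and incidences of their vertical and horizontal segments with critical loci, level exchange points, and $1/1$-intersections), one checks that for an open dense set of $(\xi_i)\in\calF^{3n}$ the top strata of the $\psi_e^{-1}(F_e)$ meet transversely in $\bConf_\Gamma(M)[\vec{k}]$, while any intersection meeting a lower stratum of some $\psi_e^{-1}(F_e)$, or a codimension-$\geq 1$ face of $\partial\bConf_\Gamma(M)$, has strictly negative expected dimension and so is empty. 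Since there are only finitely many labeled trivalent graphs of degree $n$ and countably many $\vec{k}\in\Z^{3n}$, the Baire property then delivers a residual subset of $\calF^{3n}$ achieving transversality simultaneously for all $(\Gamma,\vec{k})$. For fixed $(\Gamma,\vec{k})$, the multi-index $\vec{k}$ bounds the homotopy class of each edge, so only finitely many compact strata of each $\bacalM_{K_2}(\xi_i)$ from Theorem~\ref{thm:propagator}(1) can contribute, and their transverse intersection is finite.

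The hard part will be pushing the transversality all the way into the boundary of the propagators. An AL-path can degenerate in several distinct ways---a horizontal segment collapsing onto a $1/1$-intersection, a vertical segment shrinking near a birth-death locus, or two consecutive horizontal segments merging at a level exchange point---so $Q(\xi_i)$ carries a rich stratification whose lower strata must all be shown to be missed by the intersection for a generic choice of $(\xi_i)$. One must also impose the open dense condition that no $1/1$-intersection of any $\xi_i$ lies on $\Sigma$, and that the ascending/descending loci meet $\Sigma$ transversely, which is needed to read off the labels $\vec{k}$ unambiguously. Verifying the dimensional accounting face-by-face and ensuring that the independent perturbation of each $\xi_i$ suffices to move each stratum into general position is the most delicate step of the argument.
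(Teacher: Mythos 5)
Your proposal follows essentially the same strategy as the paper, but with two points where the paper makes a cleaner or more explicit argument than what you outline.

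First, on simultaneity: you propose a Baire argument over the countably many pairs $(\Gamma,\vec{k})$, which does work (openness of transversality for each fixed pair is supplied by compactness of that moduli space), but the paper observes something sharper. For a generic tuple $(\xi_1,\ldots,\xi_{3n})$, there are only \emph{finitely many} possible horizontal components that can appear as the horizontal part of an AL-graph of degree $n$: a horizontal component of generic type is determined by the intersection pattern of finitely many ascending/descending manifold loci in a single fiber, and for dimensional reasons (which the paper states explicitly) none of its vertices lies on a critical locus of index 0 or 2. Once these finitely many horizontal components are in general position, transversality of the moduli space follows simultaneously for every $\vec{k}$ and every labeled $\Gamma$, because the AL-paths connecting their univalent vertices are rigid. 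This reduces the infinitely many conditions to a genuinely finite one and avoids Baire.

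Second, on compactness: your remark that ``the multi-index $\vec{k}$ bounds the homotopy class of each edge, so only finitely many compact strata of each $\bacalM_{K_2}(\xi_i)$ can contribute'' captures the right idea, but it is not immediate from Theorem~\ref{thm:propagator}(1) alone. The paper closes the gap via the cobordism decomposition $\wM=\bigcup_i M[i]$, $M[i]=\bar\kappa^{-1}[i,i+1]$, and the result of \cite[Lemma~4.1]{Wa2} that the space of AL-paths from a point of $M[k]$ to $M[k-n]$ is a finite union of finite coverings over compact sets. That is what guarantees that bounding the crossings with $\Sigma$ actually bounds the combinatorial type of the paths (one must still rule out paths that oscillate through arbitrarily many vertical/horizontal segments within a single $M[i]$, which follows from compactness of $M[i]$ and the descending condition). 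Beyond these two points your dimension count, the use of independent perturbation of each $\xi_i$, the genericity requirement that no $1/1$-intersection lies on $\Sigma$, and the assessment of where the delicacy lies (pushing transversality into the lower strata of $Q(\xi_i)$, which the paper delegates to Fukaya and to \cite{Wa1}) all match the paper's argument.
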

\begin{proof}
By a dimensional reason, we may assume that a horizontal component in $H$ does not have a vertex that is on a critical locus of index 0 or 2, for a generic choice of $(\xi_1,\xi_2,\ldots,\xi_{3n})$. Namely, every horizontal components in $H$ are of generic type. 

The transversality can be proved by an argument similar to \cite[p.~49]{Fu} or \cite[Proposition~2.4]{Wa1} except that the descending (resp. ascending) manifolds of critical points of index 2 (resp. index 1) is replaced with the descending (resp. ascending) manifold loci of critical loci of index 1. 

For the compactness, we use the fact that the subset of $\bacalM_{K_2}(\xi_i)$ of paths of bounded lengths consists of finitely many compact strata (\cite[Lemma~4.1]{Wa2}). Namely, the lift $\pi^{-1}\Sigma=\coprod_{i\in\Z}\Sigma[i]$, $\Sigma[i]=\pi^{-1}(i)$, of $\Sigma$ in $\wM$ decompose $\wM$ into cobordisms: $\wM=\bigcup_{i\in\Z}M[i]$ where $M[i]=\bar{\kappa}^{-1}[i,i+1]$. By the definition of the moduli space of AL-paths given in \cite{Wa2}, the set of AL-paths from a point of $M[k]$ to $M[k-n]$ forms a finite union of finite coverings over compact subsets of $M[k]\times M[k-n]$, where the number of sheets in the covering is the number of AL-paths from an output point in $\Sigma[i]$ to an input point in $\Sigma[k-n+1]$. Hence the moduli space of AL-graphs of given type $\vec{k}$ is compact. 

For a fixed $n$ and for a generic choice of $(\xi_1,\ldots,\xi_{3n})$, there are finitely many possiblities for horizontal components that may be a horizontal part of an AL-graph of degree $n$. Thus the transversality for all the horizontal components implies that $\acalM_{\Gamma(\vec{k})}(\Sigma;\xi_1,\ldots,\xi_{3n})$ is a compact oriented 0-dimensional manifold for all $\vec{k}$ and for all $\Gamma$ of degree $n$. 
\end{proof}

We may identify a point of $\acalM_{\Gamma(\vec{k})}(\Sigma;\xi_1,\ldots,\xi_{3n})$ with an oriented 0-manifold in $\bConf_{\Gamma}(M)$. Hence the moduli space $\acalM_{\Gamma(\vec{k})}(\Sigma;\xi_1,\ldots,\xi_{3n})$ can be counted with signs. The sum of the signs agrees with the sum of coefficients of the terms of $t_1^{k_1}t_2^{k_2}\cdots t_{3n}^{k_{3n}}$ in the power series expansion of $\langle Q^\circ(\xi_1),Q^\circ(\xi_2),\ldots,Q^\circ(\xi_{3n})\rangle_\Gamma$. We denote the sum of signs by $\#\acalM_{\Gamma(\vec{k})}(\Sigma;\xi_1,\ldots,\xi_{3n})$. 

Lemma~\ref{lem:transversality} implies that for generic choices of $\Sigma,\kappa_1,\ldots,\kappa_{3n},\xi_1,\ldots,\xi_{3n}$, an AL-graph $I\in \acalM_{\Gamma(\vec{k})}(\Sigma;\xi_1,\ldots,\xi_{3n})$ consists of finitely many horizontal components of generic type and some AL-paths connecting the univalent vertices of the horizontal components. We say that such an AL-graph is of {\it generic type}. The following proposition follows from Theorem~\ref{thm:propagator} and from definition of $Z_n$. 

\begin{Prop}\label{prop:F} Let $\xi_1,\xi_2,\ldots,\xi_{3n}$ be as in Lemma~\ref{lem:transversality}. For a labeled trivalent graph $\Gamma$, let $F_\Gamma(\Sigma;\xi_1,\xi_2,\ldots,\xi_{3n})$ be the generating function
\[ \sum_{\vec{k}=(k_1,\ldots,k_{3n})\in\Z^{3n}}\#\acalM_{\Gamma(\vec{k})}(\Sigma;\xi_1,\xi_2,\ldots,\xi_{3n})\,t_1^{k_1}t_2^{k_2}\cdots t_{3n}^{k_{3n}}, \]
where $\#\acalM_{\Gamma(\vec{k})}(\Sigma;\xi_1,\xi_2,\ldots,\xi_{3n})$ is the count of AL-graphs of type $\vec{k}$ of generic type. Then there exist Laurent polynomials $P_i^{(\mu)}(t)\in \Lambda$, $i=1,2,\ldots,3n$, $\mu=1,2,\ldots,N$, such that
\[ F_\Gamma(\Sigma;\xi_1,\xi_2,\ldots,\xi_{3n})=\sum_{\mu=1}^N\prod_{i=1}^{3n}\frac{P_i^{(\mu)}(t_i)}{(1-t_i)^2\Delta(t_i)}.\]
Considering this as an element of $\Q(t_1)\otimes_\Q\Q(t_2)\otimes_\Q\cdots\otimes_\Q\Q(t_{3n})$, we have
\[ Z_n(\xi_1,\xi_2,\ldots,\xi_{3n}) =\sum_\Gamma \Tr_\Gamma F_\Gamma(\Sigma;\xi_1,\xi_2,\ldots,\xi_{3n}). \]
\end{Prop}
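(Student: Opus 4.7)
The strategy is to combine a geometric identification of transversal intersection points with AL-graphs, and the rationality guaranteed by Theorem~\ref{thm:propagator}.

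First I would show that, as a formal series in $t_1,\ldots,t_{3n}$,
\[ \langle Q^\circ(\xi_1),\ldots,Q^\circ(\xi_{3n})\rangle_\Gamma = F_\Gamma(\Sigma;\xi_1,\ldots,\xi_{3n}). \]
By Theorem~\ref{thm:propagator}, an interior point of $Q(\xi_i)$ is (up to the blow-up along the diagonal preimage) the evaluation of an AL-path of $\xi_i$, and $Q'(\xi_i)$ is literally a sum of closed AL-paths. A transversal point of the intersection $\bigcap_e\psi_e^{-1}(Q^\circ(\xi_e))\subset\bConf_\Gamma(M)$ is therefore precisely a piecewise smooth map $I:\Gamma\to M$ whose $e$-th edge is an AL-path of $\xi_e$, i.e., a point of $\acalM_{\Gamma(\vec k)}(\Sigma;\xi_1,\ldots,\xi_{3n})$ for some $\vec k$. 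The sheet of the $\Z^{3n}$-covering $\bConf_\Gamma(M)\to\bConf_{2n}(M)$ above which the point lies records the algebraic intersection $k_e$ of the $e$-th edge with $\Sigma$, which produces the monomial factor $t_e^{k_e}$. Lemma~\ref{lem:transversality} ensures that only generic type AL-graphs contribute, and a local vertex-by-vertex check reduces the wedge-of-coorientations sign of \S\ref{ss:mlinearform} to the intrinsic AL-graph sign defined via the stratum coorientations of \S\ref{ss:coori}. Summing over $\vec k$ gives the generating function $F_\Gamma$.

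Next I would establish the rational-function structure. Theorem~\ref{thm:propagator}(2) asserts that $(1-t)^2\Delta(t)Q(\xi_i)$ is a $\Lambda$-chain, so it admits a finite decomposition
\[ (1-t)^2\Delta(t) Q(\xi_i) = \sum_{\ell} Q_i^{(\ell)}\,t^{\ell} \]
into ordinary 4-chains supported in $\bConf_{K_2}(M)[0]$. For self-loop edges, the corresponding statement for $Q'(\xi_i)$ reduces to the identity $\zeta_\varphi(t) = \Delta(t)/(1-t)^2$ coming from the Lefschetz trace formula for a surface diffeomorphism acting trivially on $H_0\oplus H_2$, together with Theorem~\ref{thm:propagator}(3), which computes the generating series of closed AL-paths as $t\zeta'_\varphi/\zeta_\varphi$; multiplying by $(1-t)^2\Delta(t)$ produces a Laurent polynomial. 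Using $p_i=(1-t_i)^2\Delta(t_i)$ and multilinearity of $\langle\cdot,\ldots,\cdot\rangle_\Gamma$ gives
\[ \Bigl[\prod_{i=1}^{3n}(1-t_i)^2\Delta(t_i)\Bigr]\,F_\Gamma = \sum_{\vec\ell}\,\bigl\langle Q_1^{(\ell_1)},\ldots,Q_{3n}^{(\ell_{3n})}\bigr\rangle_\Gamma\, t_1^{\ell_1}\cdots t_{3n}^{\ell_{3n}}, \]
a finite Laurent polynomial in $t_1,\ldots,t_{3n}$. Any such polynomial trivially splits as a finite sum of split tensors $\sum_\mu\prod_i P_i^{(\mu)}(t_i)$, yielding the claimed expression for $F_\Gamma$. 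The identity $Z_n = \sum_\Gamma \Tr_\Gamma F_\Gamma$ is then immediate from the definition of $Z_n$ and of $\Tr_\Gamma$ on such split tensors.

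The main obstacle will be the sign matching in the first step: showing that the wedge $\bigwedge_e \psi_e^* o^*_{\bConf_{K_2}(M)}(Q^\circ(\xi_e))$ evaluated at an AL-graph equals the intrinsic AL-graph sign. This is a local computation at each trivalent vertex of $I$ that mixes the descending/ascending coorientations from \S\ref{ss:coori}, the signs of the $1/1$-intersections along horizontal edges, and the coorientation convention (case (2)) for a single horizontal $\sigma_1$. A secondary difficulty is confirming the $\Lambda$-chain property for $Q'(\xi_i)$ at self-loops, since $Q'(\xi_i)$ is a priori an infinite sum over closed AL-paths including all iterations $(\gamma^\irr)^k$; the zeta-function identity is precisely what converts this infinite geometric data into an explicit rational function with the common denominator $(1-t)^2\Delta(t)$.
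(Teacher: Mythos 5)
Your proposal is correct and takes essentially the same route as the paper, whose own ``proof'' is the single remark immediately preceding the statement that Proposition~\ref{prop:F} follows from Theorem~\ref{thm:propagator} and from the definition of $Z_n$. One thing you overestimate: the sign-matching you flag as the main obstacle is not actually a gap, since in the paragraph just before the proposition the paper \emph{defines} $\#\acalM_{\Gamma(\vec{k})}(\Sigma;\xi_1,\ldots,\xi_{3n})$ to be the coefficient of $t_1^{k_1}\cdots t_{3n}^{k_{3n}}$ in the power-series expansion of $\langle Q^\circ(\xi_1),\ldots,Q^\circ(\xi_{3n})\rangle_\Gamma$, so the identification of the intersection-form signs of \S\ref{ss:mlinearform} with the AL-graph count holds by fiat rather than requiring the vertex-by-vertex comparison with \S\ref{ss:coori} that you anticipate.
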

\begin{Rem}
The trace $\Tr_\Gamma$ can not be directly applied to formal power series. For example, consider the formal power series 
\[\sum_{k\geq 1}t_1^{k}t_2^{k}t_3^{k}.\]
When $\Gamma=\Theta$ in (\ref{eq:theta}) in \S\ref{ss:lescop-inv} below, applying $\Tr_\Theta$ to each monomial $t_1^kt_2^kt_3^k$ corresponds to taking the equivalence class in $\Q[t_1^{\pm 1},t_2^{\pm 1},t_3^{\pm 1}]/(t_1t_2t_3-1)$ by the Holonomy relation. However, the infinite sum of the equivalence classes is not well-defined since $\sum_{k\geq 1} t_1^kt_2^kt_3^k=\sum_{k\geq 1}1=\infty$ if $t_1t_2t_3=1$.
\end{Rem}

\subsection{A combinatorial formula for Lescop's 2-loop invariant for fibered 3-manifolds with $H_1=\Z$}\label{ss:lescop-inv}

Here, we assume that $H_1(M;\Z)=\Z$. Let $\tau:TM\to \R^3\times M$ be a trivialization of $TM$ and let $s_\tau:M\to ST(M)$ be a section induced by $\tau$ that sends $M$ to $\{v\}\times M$ for a fixed $v\in S^2$. We denote by $s_\tau(M;v)$ the image of $s_\tau$. Suppose that $s_\tau|_K$ agrees with the unit tangent vectors of $TK$. Moreover, by choosing $f$ suitably, we may assume that $K$ agrees with a critical locus of index 0. Let $\check{A}(K):S^1\times [0,1]\to \bConf_2(M)$ be the map defined by $\check{A}(K)(t,u)=(K(t),K(t+u))$ and let $A(K):S^1\times [0,1]\to \bConf_{K_2}(M)$ be the lift of $\check{A}(K)$ such that $S^1\times\{0\}$ is taken to $\bConf_{K_2}(M)[0]$. 

Originally, an equivariant propagator is defined in \cite{Les2} as a 4-dimensional $\widehat{\Lambda}$-chain $Q$ in $\bConf_{K_2}(M)$ such that the chain level identity
\begin{equation}\label{eq:dQ_tau}
 \partial Q=s_\tau(M;v)+I_\Delta(t)\,ST(K')
\end{equation}
holds, where $K'$ is a parallel of $K$ and 
\[ I_\Delta(t)=\frac{1+t}{1-t}+\frac{t\Delta'(t)}{\Delta(t)}, \]
$\Delta(t)$ is the Alexander polynomial of $M$ normalized so that $\Delta(1)=1$ and $\Delta(t^{-1})=\Delta(t)$, and such that
\[ \langle Q,A(K)\rangle_\Z=\sum_{i\in \Z}\langle t^{-i}Q,A(K)\rangle_{\bConf_{K_2}(M)}\,t^i=0. \]
Lescop considered in \cite{Les2} such a $Q$ to define an equivariant invariant of $M$. Since our equivariant propagator $Q(\xi)$ does not satisfy the identity (\ref{eq:dQ_tau}) (compare to Theorem~\ref{thm:propagator}), we extend it by adding a bordism in $ST(M)$ in order to use results of \cite{Les2}. 

\subsubsection{Bordism between closed AL-paths and a knot}
For a closed AL-path $\gamma:S^1\to M$, we shall take a 2-dimensional bordism $V_\gamma$ such that $\partial V_\gamma=\pm\gamma-\mu\,K'$ for an integer $\mu$, as follows. Let $\Sigma=\kappa^{-1}(c)$ for a generic value $c\in S^1$ and let $x_1,x_2,\ldots,x_r\in\Sigma$ be all the intersection points of $\Sigma$ with critical loci of an oriented fiberwise Morse function $f$. Suppose that $K'$ intersects $\Sigma$ transversally at $x_0$ with the intersection number $1$. For each $j$, choose a path $c_j$ on $\Sigma$ from $x_0$ to $x_j$. Let $P_{ij}$ be the set of AL-paths from $x_i$ to $x_j$ which do not intersect $\Sigma$ except the endpoints. Then for $\omega\in P_{ij}$, the 1-cycle $\underline{\omega}=c_i+\ve(\omega)\omega-c_j$ is bordant to $K'$ since $H_1(M)=\Z$. Take a 2-dimensional bordism $V_{\omega}$ such that $\partial V_{\omega}=\underline{\omega}-K'$ and such that $\langle V_\omega,K\rangle =0$, and let 
\[ V_{ij}=\sum_{\omega\in P_{ij}} V_\omega,\quad N(x_i,x_j)=\sum_{\omega\in P_{ij}} \ve(\omega). \]

A closed AL-path $\gamma:S^1\to M$ is cut by $\Sigma$ into segments: $\gamma=\omega_1 \omega_2\cdots \omega_k$, $\omega_\ell\in \bigcup_{i,j}P_{ij}$. Then the 2-dimensional bordism 
\begin{equation}\label{eq:Vsum}
 V_\gamma=\sum_{j=1}^{k}V_{\omega_j}
\end{equation}
satisfies $\partial V_\gamma=\ve(\gamma)\gamma - p(\gamma)K'$.
\begin{Lem}\label{lem:ST(V)} Let $\widehat{V}_\xi$ be the 2-dimensional $\widehat{\Lambda}$-chain
\[ \widehat{V}_\xi=\sum_{1\leq i,j\leq r}(1-tA)_{ji}^{-1}V_{ij}, \]
where $A$ is the matrix $(N(x_i,x_j))$ and $(1-tA)_{ji}^{-1}$ is the $(j,i)$-th entry of $(1-tA)^{-1}$. 
Then we have
\[ 
  \partial ST(\widehat{V}_\xi)=\sum_\gamma(-1)^{\mathrm{ind}\gamma}\ve(\gamma)\,t^{p(\gamma)}ST(\gamma^\irr)-\frac{t\zeta'}{\zeta}ST(K'),
 \]
where the sum is over equivalence classes of all closed AL-paths for $\xi$, and $\zeta$ is the Lefschetz zeta function of the monodromy of the fibration $\kappa$.
\end{Lem}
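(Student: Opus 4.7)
The plan is to expand $\partial \widehat V_\xi$ chain by chain from the defining identity $\partial V_\omega=\underline\omega-K'$, and then to reassemble the result using the matrix identity $(1-tA)^{-1}=\sum_{k\ge 0}t^kA^k$. After applying $ST$ and using that $\partial$ commutes with $ST$ (because the $S^2$-fiber has no boundary), it will suffice to identify the formal power series in $t$ coming out of the matrix inversion with the two claimed terms on the right-hand side.

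First I record that for each $\omega\in P_{ij}$ the identity $\partial V_\omega=c_i+\ve(\omega)\omega-c_j-K'$ yields, after summation over $P_{ij}$,
\[
\partial V_{ij}=\sum_{\omega\in P_{ij}}\ve(\omega)\,\omega+n_{ij}(c_i-c_j-K'),
\]
where $n_{ij}=\#P_{ij}$. Substituting into $\widehat V_\xi=\sum_{i,j}[(1-tA)^{-1}]_{ji}V_{ij}$ and expanding the matrix inverse as a geometric series, the $\omega$-contribution to $\partial ST(\widehat V_\xi)$ becomes a sum over labeled cyclic sequences of AL-path segments $\omega_1\omega_2\cdots\omega_{k+1}$ with $\omega_\ell\in P_{i_{\ell-1}i_\ell}$ and $i_0=i_{k+1}=j$. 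Each such sequence represents a closed AL-path of period $k+1$ based at $x_j$. Matching labeled cyclic sequences to equivalence classes of closed AL-paths and reading off the overall sign from the orientation convention of \S\ref{ss:coori}, this part equals $\sum_\gamma(-1)^{\mathrm{ind}\,\gamma}\ve(\gamma)\,t^{p(\gamma)}ST(\gamma^\irr)$.

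For the $K'$-part, the scalar coefficient in front of $ST(K')$ is $-\sum_{i,j}[(1-tA)^{-1}]_{ji}n_{ij}$. Since all AL-path segments in $P_{ij}$ cross $\Sigma$ in the descending direction, $n_{ij}$ coincides with $A_{ij}=N(x_i,x_j)$, so this coefficient equals $-\mathrm{tr}(A(1-tA)^{-1})$. The matrix-calculus identity $-\tfrac{d}{dt}\log\det(1-tA)=\mathrm{tr}(A(1-tA)^{-1})$, combined with the identification of $\det(1-tA)^{-1}$ with $\zeta_\varphi(t)$ via matching signed traces $\mathrm{tr}(A^k)$ with the Lefschetz numbers $L(\varphi^k)$, then yields the coefficient $-t\zeta'/\zeta$. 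The reference-path contributions $\pm ST(c_i),\pm ST(c_j)$ must cancel: after relabeling summation indices and telescoping the geometric series expansion, each term $[(1-tA)^{-1}]_{ji}n_{ij}ST(c_i)$ pairs with an oppositely signed term coming from the cyclically shifted sequence in the same closed orbit, so the net contribution vanishes.

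The principal obstacle is the Lefschetz identification above: realizing the combinatorial transfer matrix $A$ as a representative of the monodromy $\varphi$ on a chain model of the fiber, so that $\det(1-tA)$ computes the reciprocal of $\zeta_\varphi$ up to the appropriate normalization. This is essentially a Morse-theoretic avatar of the Lefschetz fixed-point theorem applied to all iterates $\varphi^k$, and is the content of Proposition~\ref{prop:AL-homology}. Once that correspondence is in hand, the remainder of the lemma reduces to a formal matrix power-series manipulation, whose only other delicate point is the bookkeeping of signs coming from the coorientations along the $1/1$-intersections on each AL-path segment.
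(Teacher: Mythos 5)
Your plan inverts the order of the paper's argument in a way that creates a genuine gap. The paper first establishes (by a generating-function comparison of the coefficients of each $V_\omega$, cited to an argument parallel to \cite[Lemma~4.2]{Wa2}) the chain identity
\[
  \widehat{V}_\xi=\sum_\gamma(-1)^{\mathrm{ind}\gamma}\ve(\gamma)\,t^{p(\gamma)}V_{\gamma^\irr},
\]
and only then takes the boundary using the closed-path relation $\partial V_{\gamma^\irr}=\ve(\gamma^\irr)\gamma^\irr-p(\gamma^\irr)K'$ from (\ref{eq:Vsum}). In that grouping the reference arcs $c_i,c_j$ telescope away automatically inside each cyclic word, because $V_{\gamma^\irr}=\sum_\ell V_{\omega_\ell}$ is a cyclic concatenation. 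You instead differentiate $\widehat V_\xi$ in its $V_{ij}$-form, and your $c_i$-coefficient becomes $\sum_j (1-tA)^{-1}_{ji}n_{ij}-\sum_k(1-tA)^{-1}_{ik}n_{ki}$, which you make vanish by asserting $n_{ij}=A_{ij}$ so that the two terms are $[A(1-tA)^{-1}]_{ii}$ and $[(1-tA)^{-1}A]_{ii}$, equal by commutativity. The same assertion is what gives you $-\mathrm{tr}\,(A(1-tA)^{-1})$ for the $K'$-coefficient.

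But $n_{ij}=\#P_{ij}$ is an unsigned cardinality, whereas $A_{ij}=N(x_i,x_j)=\sum_{\omega\in P_{ij}}\ve(\omega)$ is a signed count, and $\ve(\omega)$ is the product of signs of the $1/1$-intersections along $\omega$ (see \S\ref{ss:coori}) --- not a direction of crossing $\Sigma$, as your justification suggests. Since $\ve(\omega)$ is not uniformly $+1$, the equality $n_{ij}=A_{ij}$ fails in general, and with it both the $c_i$-cancellation and the identification of the $K'$-coefficient. Two further bookkeeping problems: the factors $(-1)^{\mathrm{ind}\gamma}$ that appear in the lemma statement never enter your computation, and even granting $n_{ij}=A_{ij}$ your coefficient $-\mathrm{tr}\,(A(1-tA)^{-1})$ equals $-\zeta'/\zeta$ rather than $-t\zeta'/\zeta$, so a factor of $t$ in the $\Lambda$-degree bookkeeping is being dropped somewhere. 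Finally, the closed-form identity $\sum_\gamma(-1)^{\mathrm{ind}\gamma}\ve(\gamma)\,t^{p(\gamma)}p(\gamma^\irr)=t\zeta'/\zeta$ is not a consequence of Proposition~\ref{prop:AL-homology} (which computes $H_*(M;\Z)$, not the zeta function); the paper instead cites \cite[Proposition~4.9]{Wa2} for it.
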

\begin{proof}
By an argument similar to \cite[Lemma~4.2]{Wa2}, we have
\[   \widehat{V}_\xi=\sum_\gamma(-1)^{\mathrm{ind}\gamma}\ve(\gamma)\,t^{p(\gamma)}\,V_{\gamma^\irr}. \]
Indeed, by decomposing $V_{\gamma^\irr}$ as (\ref{eq:Vsum}), the right hand side can be rewritten as a linear combination of the chains $V_\omega$ where $\omega\in \bigcup_{i,j}P_{ij}$. The coefficient of $V_\omega$ in the right hand side is a power series in $t$, which is the generating function of the numbers of irreducible closed AL-paths that starts from $\omega$, namely, base pointed closed AL-paths. If $\omega\in P_{ij}$, the coefficient of $V_\omega$ is
\[ \sum_{n=1}^\infty (tA)_{ji}^{n-1}=(1-tA)_{ji}^{-1}. \] 

By definition of $V_{\gamma}$, we have
\[\begin{split}
\partial ST(\widehat{V}_\xi)&=\sum_\gamma(-1)^{\ind{\gamma}} \ve(\gamma)\,t^{p(\gamma)}\,(ST(\gamma^\irr)-p(\gamma^\irr)ST(K'))\\
&=\sum_\gamma(-1)^{\ind{\gamma}} \ve(\gamma)\,t^{p(\gamma)}\,ST(\gamma^\irr)-\frac{t\zeta'}{\zeta}ST(K'),
\end{split}\]
where the last equality follows from \cite[Proposition~4.9]{Wa2}.
\end{proof}

\subsubsection{Bordism for the difference of sections}
The following lemma follows from \cite[Proposition~2.12, 4.5]{Les2}. 
\begin{Lem}\label{lem:U}
There exists a 4-dimensional chain $U_\xi$ in $ST(M)$ such that
\[ \partial U_\xi=s_\xi^*(M)-s_\tau(M;v)+(g-1)ST(K'), \]
where $g$ is the genus of $\Sigma$. 
\end{Lem}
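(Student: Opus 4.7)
The plan is to produce $U_\xi$ as a bounding 4-chain in $ST(M)$ for the 3-cycle
\[
s_\xi^*(M) - s_\tau(M;v) + (g-1)\,ST(K'),
\]
by showing this 3-cycle is null-homologous. First I would verify that both $s_\xi^*(M)$ and $s_\tau(M;v)$ are indeed closed 3-cycles: the former was arranged in \S\ref{ss:coori} by attaching the lower-hemisphere disk bundles $E^-_\gamma$ to $\overline{s_\xi(M_0)}$ precisely so that $\partial\overline{s_\xi(M_0)}=\bigcup_\gamma\partial E^-_\gamma$ cancels, while $s_\tau(M;v)$ is a smooth section of $\pi:ST(M)\to M$.

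Next, via the trivialization $\tau$ identifying $ST(M)\cong M\times S^2$, the Künneth formula gives
\[
H_3(ST(M))\cong H_3(M)\oplus\bigl(H_1(M)\otimes H_2(S^2)\bigr),
\]
with the second summand generated by $[ST(K')]=[K']\otimes[S^2]$, since $[K']$ generates $H_1(M)=\Z$. Both $s_\xi^*(M)$ and $s_\tau(M;v)$ cover $M$ with degree one under $\pi$ (the hemispheres $E^-_\gamma$ collapse to points), so their $\pi_*$-images agree and the difference lies in $\ker\pi_*=\Z\cdot[ST(K')]$. Thus $[s_\xi^*(M)]-[s_\tau(M;v)]=n\cdot[ST(K')]$ for some integer $n$, and existence of $U_\xi$ reduces to establishing $n=1-g$.

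The coefficient $n$ is computed by restricting to a generic fiber $\Sigma=\kappa^{-1}(c)$: using the pairing $[\Sigma]\cdot[K']=1$ in $M$, $n$ equals the $H_0(\Sigma)\otimes H_2(S^2)$-component of $[s_\xi^*|_\Sigma]-[s_\tau|_\Sigma]\in H_2(\Sigma\times S^2)$. On the fiber, $\xi|_\Sigma$ is a section of the rank-$2$ subbundle $\ker(d\kappa)|_\Sigma=T\Sigma$ of Euler number $\chi(\Sigma)=2-2g$, filled in by lower hemispheres $E^-_\gamma\cap\Sigma$ at each critical point of $f|_\Sigma$. Intersecting $s_\xi^*|_\Sigma$ with $\Sigma\times\{v'\}$ for generic $v'\in S^2$ becomes a signed count of critical loci weighted by Morse index and by whether $v'$ belongs to each attached hemisphere, which evaluates to $1-g$ (half the Euler number, reflecting the use of hemispheres rather than full sphere fibers). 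The main obstacle is the careful sign bookkeeping across indices $0,1,2$ and the interplay between the orientation convention on $\overline{s_\xi(M_0)}$ and on the $E^-_\gamma$; this is exactly the content of \cite[Propositions~2.12, 4.5]{Les2} adapted to $s_\xi^*$, whose conclusion we invoke directly.
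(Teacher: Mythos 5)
The paper's own proof of Lemma~\ref{lem:U} is a single-line citation to \cite[Propositions~2.12, 4.5]{Les2}, and you end up invoking the same reference for the decisive step, so in the end your argument and the paper's are the same in substance. What you add is a useful homological scaffolding: checking that $s_\xi^*(M)$ and $s_\tau(M;v)$ are 3-cycles, using the trivialization $\tau$ and the K\"unneth decomposition $H_3(ST(M))\cong H_3(M)\oplus H_1(M)\otimes H_2(S^2)$, and observing that both classes push forward to $[M]$ under $\pi$, so the difference lies in $\Z\cdot[ST(K')]$ and the lemma reduces to computing one integer $n$. That reduction is correct and is the right way to see why such a $U_\xi$ must exist.

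Where I would push back is the sketch of the coefficient computation. You write that intersecting $s_\xi^*|_\Sigma$ with $\Sigma\times\{v'\}$ becomes ``a signed count of critical loci weighted by Morse index and by whether $v'$ belongs to each attached hemisphere, which evaluates to $1-g$ (half the Euler number, reflecting the use of hemispheres rather than full sphere fibers).'' This is not quite how the count actually works. Under the trivialization $\tau$, the equatorial circle $ST(T_x\Sigma)\subset S^2$ sweeps out over $\Sigma$ with nontrivial degree (the Gauss map $\Sigma\to S^2$ of the normal line has degree $\pm(1-g)$, matching $e(T\Sigma)=2-2g$ against $e(T^1S^2)=2$), so the smooth portion $\overline{s_\xi(\Sigma\cap M_0)}$ also meets a generic $\Sigma\times\{v'\}$, not only the hemispheres. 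Thus the ``half the Euler number'' heuristic is misleading: the signed count is not simply $\frac{1}{2}\chi(\Sigma)$ taken over hemispheres, but a mixture of equatorial-section intersections and hemisphere contributions whose sum is $1-g$. Since you already flag the sign bookkeeping as the main obstacle and invoke \cite[Propositions~2.12, 4.5]{Les2} for it, this does not affect the correctness of your conclusion, but the parenthetical mechanism you offer as intuition does not reflect what is actually being counted. If you wish to give a self-contained derivation of $n=1-g$, the Gauss-map degree argument is the right tool and would replace both the hemispheric heuristic and the citation.
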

\begin{Rem}\label{rem:U}
The 4-chain $U_\xi$ can be taken more explicitly as follows. Let $\hat{\xi}=\xi+\rho\,\mathrm{grad}\,\kappa$, where $\rho$ is a nonnegative smooth function supported on a small tubular neighborhood of the union of all critical loci. Then $\hat{\xi}$ is a nonsingular vector field on $M$ and the image of the section $s_{\hat{\xi}}=-\hat{\xi}/\|\hat{\xi}\|:M\to ST(M)$ can be arbitrarily close to $s_\xi^*(M)$ with respect to the Hausdorff distance. Now one can take $\tau$ so that the restriction of $\tau^{-1}(\{v\}\times M)$ to the complement of a small tubular neighborhood $N$ of $K'\cup\bigcup_{\gamma:\mathrm{critical\, locus}} \gamma$ is parallel to $s_{\hat{\xi}}$ at every point $x\in M\setminus N$. Then $U_\xi$ can be taken as follows.
\begin{enumerate}
\item The restriction of $U_\xi$ to each fiber $ST(x)$ of $ST(M\setminus (K'\cup \bigcup_{\gamma:\mathrm{critical\, locus}}\gamma))$ is the minimal geodesic between $s_\xi^*(M)\cap ST(x)$ and $s_{\hat{\xi}}(x)$, which is a constant path for $x\in M\setminus N$. 
\item $\tau$ may be perturbed so that $s_\tau(M;v)$ is arbitrarily close to $s_\xi^*(M)$ in $M\setminus K'$ although they are singularly different on a small neighborhood of $K'$. The limit of the perturbation of $s_\tau(M;v)$ restricts on $ST(K')$ to a chain that is homologous to $(g-1)ST(K')$ in $ST(K')$. Namely, the limit of $s_\tau(M;v)$ is homologous to $s_\xi^*(M)+(g-1)ST(K')$ and the difference of the restrictions on $ST(K')$ bound a 4-chain in $ST(K')$. We may assume that in a small neighborhood of $ST(K')$ in $ST(M)$ the restriction of $U_\xi$ is given by this 4-chain. 
\end{enumerate}
\end{Rem}

\subsubsection{A formula for Lescop's invariant}
We put
\[ \widehat{Q}(\xi)=Q(\xi)-U_\xi-ST(\widehat{V}_\xi). \]
Let $\Theta$ denote the following labeled oriented graph:
\begin{equation}\label{eq:theta}
 \fig{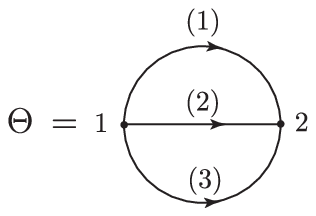} 
\end{equation}
Let $\delta(t)$ be the minimal polynomial of $t:H_1(\wM;\Q)\to H_1(\wM;\Q)$ normalized so that $\delta(1)=1$ and $\delta(t^{-1})=\delta(t)$. Let $O_\delta$ be the subspace of $\calA_1(\widehat{\Lambda})$ spanned by the elements
\[
\Tr_\Theta\Bigl(\frac{t_1^k-t_1^{-k}}{\delta(t_1)}\otimes I_\Delta(t_2)\otimes 1\Bigr)
\]
for all positive integers $k$. 

Let $f_i:M\to \R$, $i=1,2,3$, be oriented fiberwise Morse functions for the fibration $\kappa:M\to S^1$ and let $\xi_i$ be the fiberwise gradient of $f_i$. Let $N_K$ be a small closed tubular neighborhood of $K$ in $M$ and let $K_1,K_2,K_3\subset N_K$ be three parallels of $K$ taken with respect to the trivialization $\tau|_K$. We may assume without loss of generality that $K$ is disjoint from all the closed AL-paths of $\xi_i$. Let $v_1,v_2,v_3\in S^2$ be three points that are close to the fixed point $v\in S^2$. Then by replacing $\xi,K,v$ with $\xi_i,K_i,v_i$ in the definition of $\widehat{Q}(\xi)$ above, we obtain three 4-dimensional chains $\widehat{Q}(\xi_i)$, $i=1,2,3$.

\begin{Thm}\label{thm:Q} We have
\[ \partial \widehat{Q}(\xi_i)=
s_{\tau}(M;v_i)+I_\Delta(t_i)\,ST(K_i), \]
\[ \langle Q(\xi_i),A(K)\rangle_\Z=0\quad (i=1,2,3). \]
After perturbations of $ST(\widehat{V}_{\xi_i})$ and $U_{\xi_i}$ along small inward normal vector fields on $\partial\bConf_{K_2}(M)$ fixing on $\partial ST(\widehat{V}_{\xi_i})$ and $\partial U_{\xi_i}$ for each $i$, we may arrange that the expression 
\[ \calQ(\xi_1,\xi_2,\xi_3)=\Tr_\Theta\langle \widehat{Q}(\xi_1),\widehat{Q}(\xi_2),\widehat{Q}(\xi_3)\rangle_\Theta \]
is well-defined and the class of $\calQ(\xi_1,\xi_2,\xi_3)$ in $\calA_1(\widehat{\Lambda})/O_\delta$ is an invariant of $M$ and the homotopy class of $\tau$. (Up to normalization, $\calQ$ agrees with Lescop's invariant in \cite{Les2} without framing correction term.) For generic choices of $\kappa_i,\xi_i$ and for $U_{\xi_i}$ chosen as in Remark~\ref{rem:U}, we have
\[ \begin{split}
  \calQ(\xi_1,\xi_2,\xi_3)&=\Tr_\Theta\Bigl[\langle Q(\xi_1),Q(\xi_2),Q(\xi_3)\rangle_\Theta
+\langle U_{\xi_1},U_{\xi_2},U_{\xi_3}\rangle_\Theta\\
&+\langle Q(\xi_1),U_{\xi_2},U_{\xi_3}\rangle_\Theta
+\langle U_{\xi_1},Q(\xi_2),U_{\xi_3}\rangle_\Theta
+\langle U_{\xi_1},U_{\xi_2},Q(\xi_3)\rangle_\Theta\\
&+\langle ST(\widehat{V}_{\xi_1}),U_{\xi_2},U_{\xi_3}\rangle_\Theta
+\langle U_{\xi_1},ST(\widehat{V}_{\xi_2}),U_{\xi_3}\rangle_\Theta
+\langle U_{\xi_1},U_{\xi_2},ST(\widehat{V}_{\xi_3})\rangle_\Theta\Bigr]. 
\end{split}\]
\end{Thm}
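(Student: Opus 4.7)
The plan is to prove the theorem in four linked steps: (i) verify the boundary formula for $\widehat{Q}(\xi_i)$, (ii) verify the linking identity $\langle Q(\xi_i),A(K)\rangle_\Z=0$, (iii) perturb the boundary chains to arrange transversality and invoke Lescop's invariance theorem modulo $O_\delta$, and (iv) expand the trilinear form to extract the displayed combinatorial formula.

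For (i), apply $\partial$ to $\widehat{Q}(\xi_i)=Q(\xi_i)-U_{\xi_i}-ST(\widehat{V}_{\xi_i})$ and combine Theorem~\ref{thm:propagator}(2), Lemma~\ref{lem:U}, and Lemma~\ref{lem:ST(V)}. The $s_{\xi_i}^*(M)$ and $\sum_\gamma(-1)^{\ind\,\gamma}\ve(\gamma)t_i^{p(\gamma)}ST(\gamma^\irr)$ contributions cancel pairwise, leaving
\[
  \partial\widehat{Q}(\xi_i)=s_\tau(M;v_i)+\Bigl(\frac{t_i\zeta'(t_i)}{\zeta(t_i)}-(g-1)\Bigr)\,ST(K_i).
\]
The identity $\frac{t\zeta'}{\zeta}-(g-1)=I_\Delta(t)$ follows from the factorization $\zeta_\varphi(t)=c\,t^g\Delta(t)/(1-t)^2$, itself a consequence of $\zeta_\varphi(t)=\det(1-t\varphi_*|H_1(\Sigma))/(1-t)^2$ and the symmetric normalization $\Delta(t^{-1})=\Delta(t)$, $\Delta(1)=1$; taking logarithmic derivatives yields $\frac{t\zeta'}{\zeta}=g+\frac{t\Delta'}{\Delta}+\frac{2t}{1-t}$, and rearranging gives $\frac{t\zeta'}{\zeta}-(g-1)=\frac{1+t}{1-t}+\frac{t\Delta'}{\Delta}=I_\Delta(t)$.

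For (ii), use that $K$ coincides with a critical locus of index $0$, a family of fibrewise minima of $f$. By the classification of nondegenerate codimension-$0$ strata of $Q(\xi_i)$ in \S\ref{ss:coori}, every such AL-path begins with a horizontal segment, i.e.\ a flow line of $-\xi_i$; but $-\xi_i$ vanishes on index-$0$ critical loci and flow lines cannot originate at sinks, so the first endpoint of every AL-path in $Q(\xi_i)$ misses $K$, giving $Q(\xi_i)\cap A(K)=\emptyset$ and $\langle Q(\xi_i),A(K)\rangle_\Z=0$. For (iii), push $U_{\xi_i}$ and $ST(\widehat{V}_{\xi_i})$ into the interior of $\bConf_{K_2}(M)$ along small inward normal vector fields (distinct for different $i$, fixing boundaries); a standard transversality argument makes the resulting triple intersection a transversal $0$-chain. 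Since $\widehat{Q}(\xi_i)$ then satisfies both (\ref{eq:dQ_tau}) and the orthogonality $\langle\widehat{Q}(\xi_i),A(K)\rangle_\Z=0$ (the contributions of $U_{\xi_i}$ and $ST(\widehat{V}_{\xi_i})$ to the $\Z$-pairing vanish by support considerations against $A(K)$), it qualifies as an equivariant propagator in Lescop's sense, and her invariance theorem \cite{Les2} shows $[\calQ(\xi_1,\xi_2,\xi_3)]\in\calA_1(\widehat{\Lambda})/O_\delta$ depends only on $M$ and the homotopy class of $\tau$.

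For (iv), expand $\langle\widehat{Q}_1,\widehat{Q}_2,\widehat{Q}_3\rangle_\Theta$ trilinearly via $\widehat{Q}_i=Q(\xi_i)-U_{\xi_i}-ST(\widehat{V}_{\xi_i})$ into $27$ summands. The displayed formula amounts to the assertion that the $19$ summands not shown vanish under the chosen perturbation, so that the survivors are exactly the all-$Q$ term and the terms with at least two factors of $U_{\xi_i}$. Any summand with two or more copies of $ST(\widehat{V}_{\xi_i})$ vanishes because the $S^2$-bundle structure of $ST(\widehat{V}_{\xi_i})$ over a generic $2$-chain forces the coinciding vertex pair of $\Theta$ to lie on the intersection of two or three generic $2$-chains in $M$ (a set of dimension $\le 1$), which combined with the remaining edge constraint produces a positive-dimensional locus rather than a transversal $0$-chain. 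The remaining missing terms are controlled by Remark~\ref{rem:U}: $U_{\xi_i}$ is supported in the $ST$-bundle over a small tubular neighborhood $N$ of $K\cup\bigcup\gamma$, and by shrinking $N$ the requisite intersections with $\partial Q(\xi_j)=s_{\xi_j}^*(M)+\sum\cdots$ or with $ST(\widehat{V}_{\xi_j})$ become empty. The eight surviving summands assemble into the displayed expression, with signs absorbed into the chain orientations. The main obstacle is precisely this transversality/vanishing analysis for the $19$ extraneous trilinear terms, which requires delicate bookkeeping near the $ST(M)$ face of $\partial\bConf_{K_2}(M)$; a secondary obstacle is tracking the signs consistently through both Step~(i) and the trilinear expansion.
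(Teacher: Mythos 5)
Your outline reproduces the paper's strategy in broad strokes---step (i) (the boundary computation via the zeta-function/Alexander-polynomial identity combined with Theorem~\ref{thm:propagator}, Lemma~\ref{lem:U}, and Lemma~\ref{lem:ST(V)}) and step (iii) (inward normal perturbation plus invocation of Lescop's invariance) match the paper essentially line for line. Step (iv)'s skeleton (trilinear expansion into 27 summands, identify 8 survivors) is also the paper's. But two substantive points in steps (ii) and (iv) do not hold up as written.

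In step (ii) you argue that the first endpoint of a nondegenerate AL-path misses $K$ because ``$-\xi_i$ vanishes on index-$0$ critical loci.'' However $K$ is chosen to be a critical locus of $f$, and $\xi_i$ is a generic small perturbation of $\xi$ (the $f_i$'s are distinct from $f$), so $\xi_i$ does \emph{not} in general vanish on $K$. The paper's argument is different: it uses that $K$ is contained in an \emph{ascending manifold locus} of $\xi_i$ of index $0$ and is transversal to the fibers of $\kappa_i$, together with $K$ being disjoint from the closed AL-paths of $\xi_i$. Your ``flow lines cannot originate at a sink'' reasoning only applies if $K$ literally equals the singular set of $\xi_i$, which is the wrong hypothesis here.

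In step (iv) the bookkeeping is incomplete and the mechanism is misidentified. Your two sub-arguments cover: (a) all summands with at least two $ST(\widehat{V})$ factors ($7$ terms), and (b) summands involving a $U_{\xi_i}$ factor, which you propose to kill by shrinking the tubular neighborhood $N$. This leaves the three terms of type $\{Q,Q,V\}$ (two $Q$'s and one $ST(\widehat{V})$, no $U$) with no argument at all: they contain a single $ST(\widehat{V})$ factor so (a) does not apply, and no $U$ so (b) does not apply. Moreover the ``shrink $N$'' mechanism is not what the paper uses and does not suffice by itself: shrinking $N_i$ confines the nondegenerate $4$-dimensional part of $U_{\xi_i}$ near $C_i$, but the constraint killing the $\{Q,Q,U\}$ and $\{Q,U,V\}$ summands is the genericity statement that $C_1\cap C_2=\emptyset$---two independent generic $1$-dimensional loci of closed AL-paths in a $3$-manifold are disjoint---so the coinciding-vertex point $p$, forced by the boundary chain, cannot lie on both. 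This same $C_i\cap C_j=\emptyset$ argument is exactly what kills the $\{Q,Q,V\}$ terms, and a separate ``$\langle\widehat{V}_{\xi_i},\widehat{V}_{\xi_j},C_k\rangle$ nullhomologous'' count handles $\{Q,V,V\}$ and $\{U,V,V\}$. Finally, the $VVV$ term requires its own explicit local perturbation to distinct fiber levels $c_1<c_2<c_3$ in a normal collar of $ST(M)$; your dimension heuristic (``positive-dimensional locus rather than a transversal $0$-chain'') gestures at the non-transversality but does not supply the actual vanishing mechanism.
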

\begin{proof}
The Alexander polynomial of a fibered 3-manifold $M$ over $S^1$ is of the form $\Delta(t)=c\,t^{-g}\det(1-t\varphi_{*1})$, where $c\in \Q$ and $\varphi_{*i}:H_i(\Sigma;\Q)\to H_i(\Sigma;\Q)$ is the monodromy action of the standard generator of $\pi_1(S^1)$. This together with the formula $\zeta=\zeta_\varphi(t)=\prod_{i=0}^2\det(1-t\varphi_{*i})^{(-1)^{i+1}}$ gives the following identity of the logarithmic derivatives
\[ \frac{t\zeta'}{\zeta}-\frac{t\Delta'(t)}{\Delta(t)}=\frac{2t}{1-t}+g=\frac{1+t}{1-t}+g-1. \]
Hence by Lemma~\ref{lem:U} we have
\[ \begin{split}
  \partial\widehat{Q}(\xi_i)&=\partial Q(\xi_i) - \partial U_{\xi_i}-\partial ST(\widehat{V}_{\xi_i})\\
  &=s_\tau(M;v_i)+\left(\frac{t_i\zeta'(t_i)}{\zeta(t_i)}-(g-1)\right)ST(K_i)=s_\tau(M;v_i)+I_\Delta(t_i)\,ST(K_i).
\end{split} \]
The vanishing of $\langle \widehat{Q}(\xi_i),A(K)\rangle_\Z$ is immediate from Remark~\ref{rem:U}, namely, since the restriction of $\xi_i$ on $K$ is horizontal everywhere, $U_{\xi_i}$ does not meet $A(K)$. Since we assumed that $K$ is a critical locus of $f$ of index 0, $K$ is induded in an ascending manifold locus of $\xi_i$ of index 0, $K$ is transversal to fibers of $\kappa_i$ and $\langle Q(\xi_i),A(K)\rangle_\Z=0$. Moreover, $\langle V_\omega,K\rangle=0$ implies that $\langle ST(\widehat{V}_{\xi_i}),A(K)\rangle_\Z=0$. This shows that $\calQ$ agrees with Lescop's invariant of 3-manifolds in \cite{Les2} without framing correction term. 

Let $C_i\subset M$ be the union of the images of all the closed AL-paths of $\xi_i$ and $K_i$. By a dimensional reason, we may assume that $C_i\cap C_j=\emptyset$ if $i\neq j$. This proves the vanishing of the terms like
\[ \begin{array}{ll}
  \langle Q(\xi_1),Q(\xi_2),U_{\xi_3}\rangle_\Theta,\quad
  &\langle Q(\xi_1),Q(\xi_2),ST(\widehat{V}_{\xi_3})\rangle_\Theta,\\
  \langle Q(\xi_1),U_{\xi_2},ST(\widehat{V}_{\xi_3})\rangle_\Theta.\phantom{\displaystyle{\int}}\\
\end{array}
\]
Moreover, since $\langle \widehat{V}_{\xi_i},\widehat{V}_{\xi_j}\rangle$ is a 1-chain, we may also assume that $\langle \widehat{V}_{\xi_i},\widehat{V}_{\xi_j},C_k\rangle$ is nullhomologous if $i,j,k$ are distinct. This implies the vanishing of the terms like
\[ \langle Q(\xi_1),ST(\widehat{V}_{\xi_2}),ST(\widehat{V}_{\xi_3})\rangle_\Theta,\quad
\langle U_{\xi_1},ST(\widehat{V}_{\xi_2}),ST(\widehat{V}_{\xi_3})\rangle_\Theta. \]

Before the perturbations of $ST(\widehat{V}_{\xi_i})$, the triple intersection 
\begin{equation}\label{eq:ST3}
 \langle ST(\widehat{V}_{\xi_1}),ST(\widehat{V}_{\xi_2}),ST(\widehat{V}_{\xi_3})\rangle_\Theta
\end{equation}
is $ST$ of $\langle \widehat{V}_{\xi_1},\widehat{V}_{\xi_2},\widehat{V}_{\xi_3}\rangle_\Theta$ as a set. But the intersection is not transversal and perturbations of $ST(\widehat{V}_{\xi_i})$ are necessary. At each intersection point $x$ in the triple intersection $\langle \widehat{V}_{\xi_1},\widehat{V}_{\xi_2},\widehat{V}_{\xi_3}\rangle_\Theta$, let $U$ be a small neighborhood of $x$ in $M$. There is a local coordinate $(y,\theta,r)\in U\times S^2\times [0,a)$, $a>0$ small, on a small neighborhood of $ST(U)$ in $\bConf_2(M)$, where $ST(U)$ corresponds to $U\times S^2\times \{0\}$. Then perturb $ST(\widehat{V}_{\xi_i})$ to the level $U\times S^2\times \{c_i\}$ for a small positive number $c_i<a$ by using a cloche function supported on $U$. If $c_1,c_2,c_3$ are mutually distinct, the result of the perturbation has empty intersection in $U\times S^2\times [0,a)$. This shows the vanishing of the term (\ref{eq:ST3}). Hence we obtain the formula of the statement.
\end{proof}

\begin{Rem}
By (an analogue of) Proposition~\ref{prop:F} the first term of the sum formula in Theorem~\ref{thm:Q} counts AL-graphs of generic type. The rest in the formula of Theorem~\ref{thm:Q} have geometric descriptions as follows.
\end{Rem}

\begin{Prop}\label{prop:formula-linking}
For generic choices of $\kappa_i,\xi_i$ and for $U_{\xi_i}$ chosen as in Remark~\ref{rem:U} and Theorem~\ref{thm:Q}, we have the following.
\begin{enumerate}
\item $\langle Q(\xi_1),U_{\xi_2},U_{\xi_3}\rangle_\Theta=\displaystyle\sum_{\gamma}(-1)^{\mathrm{ind}\,\gamma}\ve(\gamma)\,t_1^{p(\gamma)}\ell k_{\gamma^\irr}(\xi_2,\xi_3)\in\Q(t_1)$, where the sum is taken over closed AL-paths for $\xi_1$ and $\ell k_{\gamma^\irr}(\xi_2,\xi_3)$ is the linking number of two parallels of $\gamma^\irr$ given by $\xi_2|_{\gamma^\irr}$ and $\xi_3|_{\gamma^\irr}$ defined with respect to $\tau$. 

\item $\langle ST(\widehat{V}_{\xi_1}),U_{\xi_2},U_{\xi_3}\rangle_\Theta=\langle \widehat{V}_{\xi_1},C(\xi_2,\xi_3)\rangle\in \Q(t_1)$, where $C(\xi_2,\xi_3)$ is the piecewise smooth link in $M$ defined as the projection of $s_{\xi_2}^*(M)\cap s_{\xi_3}^*(M)$, with the orientation determined by the intersection in $ST(M)$.

\item $\langle U_{\xi_1},U_{\xi_2},U_{\xi_3}\rangle_\Theta=\ell k_{C(\xi_2,\xi_3)}(\xi_1)\in \Z$, where $\ell k_{C(\xi_2,\xi_3)}(\xi_1)$ is the linking number of $C(\xi_2,\xi_3)$ and its parallel given by $\xi_1$. 
\end{enumerate}
\end{Prop}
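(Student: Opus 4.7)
The plan is to compute each triple intersection by exploiting the fact that $U_{\xi_i}$ and $ST(\widehat{V}_{\xi_i})$ live on the principal boundary $ST(M) = \partial^{\mathrm{pri}}\bConf_{K_2}(M)$, while $\partial Q(\xi_i)$ is also concentrated there, namely $\partial Q(\xi_i) = s_{\xi_i}^*(M) + \sum_\gamma (-1)^{\ind\gamma}\ve(\gamma) t^{p(\gamma)} ST(\gamma^{\irr})$ by Theorem~\ref{thm:propagator}. After the inward-normal perturbations of Theorem~\ref{thm:Q}, each triple intersection will localize to a fiberwise computation in the $S^2$-fibration $ST(M) \to M$, using the minimal-geodesic description of $U_\xi$ recalled in Remark~\ref{rem:U} and the definition of $ST(\widehat{V}_\xi)$.

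For part (1), I would first argue that the $s_{\xi_1}^*(M)$-component of $\partial Q(\xi_1)$ contributes nothing, by choosing the three basepoints $v_1, v_2, v_3 \in S^2$ generically distinct so that the $\tau$-sections $s_\tau(M; v_i)$ are mutually transverse to each $s_{\xi_j}^*(M)$. The remaining contribution comes from each $ST(\gamma^{\irr})$: restricting $U_{\xi_2}$ and $U_{\xi_3}$ to the $S^2$-bundle $ST(\gamma^{\irr}) \to \gamma^{\irr}$, each restricts fiberwise to the minimal geodesic from the $\tau$-direction to the normalized $\xi_i$-direction along $\gamma^{\irr}$, giving a 2-chain in the 3-manifold $ST(\gamma^{\irr})$. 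Their signed intersection is the classical description of the linking number of the two $\tau$-framed parallels of $\gamma^{\irr}$ determined by $\xi_2$ and $\xi_3$, i.e.\ $\ell k_{\gamma^{\irr}}(\xi_2, \xi_3)$; the coefficient $(-1)^{\ind\gamma}\ve(\gamma)t_1^{p(\gamma)}$ is inherited from $\partial Q(\xi_1)$.

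For part (2), since $ST(\widehat{V}_{\xi_1})$ is the full $S^2$-pullback of the 2-chain $\widehat{V}_{\xi_1}$, the triple intersection factors through the projection $\pi: ST(M) \to M$. In each fiber $ST(x)$, the arcs $U_{\xi_2}|_{ST(x)}$ and $U_{\xi_3}|_{ST(x)}$ meet (after small $\tau$-perturbations controlled by $v_2, v_3$) in a neighborhood of the locus where $s_{\hat{\xi}_2}(x) = s_{\hat{\xi}_3}(x)$, so that $\pi(U_{\xi_2} \cap U_{\xi_3})$ agrees as an oriented 1-chain with $C(\xi_2, \xi_3)$; the triple intersection then reduces to $\langle \widehat{V}_{\xi_1}, C(\xi_2, \xi_3)\rangle$ in $M$. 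For part (3), the same fiberwise identification shows that $U_{\xi_2} \cap U_{\xi_3}$ represents a chain supported near $C(\xi_2, \xi_3)$, whose intersection with $U_{\xi_1}$ (extending the section $s_{\xi_1}^*(M)$ off the boundary) computes the signed count of intersections of $s_{\xi_1}^*(M)$ with a Seifert surface for $C(\xi_2, \xi_3)$, i.e.\ $\ell k_{C(\xi_2, \xi_3)}(\xi_1)$.

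The main obstacle will be the careful verification of orientations and transversality of the fiberwise intersections in $ST(M)$: in particular, matching the orientation of $C(\xi_2, \xi_3)$ induced by the intersection $s_{\xi_2}^*(M) \cap s_{\xi_3}^*(M)$ with the one arising in $\pi(U_{\xi_2} \cap U_{\xi_3})$, and ensuring that spurious contributions away from the loci $\gamma^{\irr}$ and $C(\xi_2, \xi_3)$ cancel after the generic inward perturbations. Tracing the orientation conventions of \S\ref{ss:coori} together with the minimal-geodesic orientations from Remark~\ref{rem:U} through these localizations is where the technical work will be concentrated.
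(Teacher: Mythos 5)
The paper explicitly declines to give a proof of this proposition, stating only that it is omitted because the result is not needed elsewhere. So there is no reference argument to compare against; I can only evaluate your proposal on its own terms.

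Your overall strategy is the natural one and correctly identifies the key ingredients: localizing the triple intersections to a collar of $\partial\bConf_{K_2}(M)$, using the boundary formula $\partial Q(\xi_1)=s_{\xi_1}^*(M)+\sum_\gamma(-1)^{\ind\gamma}\ve(\gamma)t^{p(\gamma)}ST(\gamma^\irr)$, and exploiting the fiberwise geodesic-arc description of $U_{\xi_i}$ from Remark~\ref{rem:U} together with the fact that $U_{\xi_i}$ degenerates to the section $s_{\xi_i}^*$ outside the tubular neighborhood $N_i$ of $K_i'\cup\{\text{critical loci of }\xi_i\}$. That degeneration is exactly what forces the triple intersections to concentrate over the loci appearing on the right-hand sides, and your identification of $\pi(U_{\xi_2}\cap U_{\xi_3})$ with a neighborhood of $C(\xi_2,\xi_3)$ is the right heuristic.

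However, two steps in your sketch are too loose to count as a proof. First, for part (1) you dismiss the contribution of the $s_{\xi_1}^*(M)$ summand of $\partial Q(\xi_1)$ by appealing to genericity of the basepoints $v_1,v_2,v_3$, but the argument as stated only addresses transversality of $s_\tau(M;v_i)$ with the sections $s_{\xi_j}^*(M)$, which is not the quantity in question. The actual issue is whether $s_{\xi_1}^*(M)\cap U_{\xi_2}\cap U_{\xi_3}$ contributes: over points of $N_2\cap N_3$ this intersection is a priori nonempty, and over the critical loci of $\xi_2$ (where $U_{\xi_2}$ contains the entire lower hemisphere $E^-_\gamma$) the section $s_{\xi_1}^*$ hits the equatorial boundary of that hemisphere, a non-transversal configuration that needs to be resolved explicitly rather than waved away. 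Second, the dimension counting in your fiberwise localization does not close up without the collar coordinate: the restrictions of $U_{\xi_2}$ and $U_{\xi_3}$ to the 3-manifold $ST(\gamma^\irr)$ are $2$-chains, whose mutual intersection is generically $1$-dimensional, not a signed count; the $0$-dimensionality that yields a linking number only appears once you carefully incorporate the inward-normal perturbation parameters from Theorem~\ref{thm:Q}, and the result must be shown independent of how those perturbation functions are chosen. You flag the orientation-matching as future work, but the transversality and perturbation bookkeeping above is a genuine gap in the argument, not merely an orientation chase, and would need to be supplied before parts (1)--(3) can be considered proved.
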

Since Proposition~\ref{prop:formula-linking} is not necessary in the rest of this paper, we omit the proof of Proposition~\ref{prop:formula-linking}. 

\mysection{Proof of invariance of $\wZ_n$}{s:invariance}

\subsection{Independence of $\Sigma$}\label{ss:indep_sigma}

\begin{Lem}\label{lem:indep_sigma}
The term $\Tr_\Gamma\langle Q^\circ(\xi_1),\ldots,Q^\circ(\xi_{3n})\rangle_\Gamma$ of $Z_n$ does not depend on the choice of $\Sigma$ within its oriented bordism class.
\end{Lem}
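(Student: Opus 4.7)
The plan is to compare the form $\Tr_\Gamma\langle Q_1^\circ, \ldots, Q_{3n}^\circ\rangle_\Gamma$ computed with respect to two bordant surfaces $\Sigma$ and $\Sigma'$. Write $\Sigma' - \Sigma = \partial W$ for a 3-chain $W$ in $M$ placed in general position. The heart of the argument is that the $t$-grading shift induced by $\Sigma \to \Sigma'$ is precisely annihilated, at each trivalent vertex of $\Gamma$, by the Holonomy relation in $\calA_n(\widehat{\Lambda})$.

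First I would record the local shift. For a configuration $(x_i,x_j,\gamma_{ij})\in \bConf_{K_2}(M)$, the Leibniz rule for the intersection pairing gives
$$\gamma_{ij}\cdot \Sigma' - \gamma_{ij}\cdot \Sigma = \gamma_{ij}\cdot \partial W = \partial\gamma_{ij}\cdot W = W\cdot x_j - W\cdot x_i,$$
so the $\Sigma'$-expansion of a chain $F_e$ for an interval edge $e = (i,j) \in E^I(\Gamma)$ at the fiber over $(x_i,x_j)$ equals its $\Sigma$-expansion multiplied by $t_e^{W\cdot x_j - W\cdot x_i}$. For a self-loop $e \in E^\rho(\Gamma)$, the representative loop $\sigma$ is closed, $\partial\sigma = 0$, and no shift occurs.

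Next I would assemble these shifts at a transverse configuration $(x_1,\ldots,x_{2n})$ and regroup by vertex:
$$\prod_{e\in E^I(\Gamma)} t_e^{W\cdot x_{j(e)} - W\cdot x_{i(e)}} = \prod_{v\in V(\Gamma)}\Bigl(\prod_{e\ni v} t_e^{\ve_v(e)}\Bigr)^{W\cdot x_v},$$
where $\ve_v(e) = +1$ if the half-edge of $e$ at $v$ points into $v$ and $-1$ otherwise, and the two half-edges of any self-loop at $v$ contribute $t_e^{+1}t_e^{-1} = 1$ inside the inner product (consistent with their absence from the left-hand side). Under $\Tr_\Gamma$, the Holonomy relation at each trivalent vertex $v$ states precisely that multiplying the incident edge labels by $t^{\ve_v(e)}$ preserves the class in $\calA_n(\widehat{\Lambda})$; raising to the integer power $W\cdot x_v$ also preserves it, so the whole factor collapses to $1$ and the two forms agree.

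The main subtlety lies in sign bookkeeping: one must verify that the orientation of $W$ inherited from $\partial W = \Sigma' - \Sigma$, combined with the coorientations of the $F_e$, produces the $\pm 1$ assignment $\ve_v(e)$ matching the Holonomy convention exactly. A routine general-position argument then lets us choose $W$ transverse to the finitely many relevant endpoint configurations, so that each $W\cdot x_v$ is a well-defined integer locally constant near the configuration, completing the proof.
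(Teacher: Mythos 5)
Your approach is genuinely different from the paper's: you treat the bordism all at once via a global $3$-chain $W$, while the paper decomposes the bordism of $\Sigma$ into a finite sequence of homotopies and $1$-handle attachments that avoid the relevant critical loci and horizontal components, and verifies invariance one elementary move at a time. In the paper's version, a single move crosses a single trivalent vertex $v$ of a single horizontal component $H_0$, so the change is $(t_i^{\ve_i}t_j^{\ve_j}t_k^{\ve_k}-1)\,F_\Gamma(H_0)$, i.e.\ a \emph{fixed} monomial times a rational function $F_\Gamma(H_0)\in\Q(t_1)\otimes\cdots\otimes\Q(t_{3n})$, and Holonomy applies immediately.

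There is a genuine gap in your argument at the step where you apply the Holonomy relation ``term by term.'' The exponents $W\cdot x_v$ depend on the configuration $\alpha$, so the total correction is not a single monomial times a rational function but an infinite sum over configurations of varying monomial corrections. $\Tr_\Gamma$ is a $\Q$-linear map defined on rational functions, not a continuous map on formal power series; applying Holonomy monomial by monomial to an infinite expansion is precisely what the Remark following Proposition~\ref{prop:F} warns against (there, $\sum_{k\geq 1}t_1^kt_2^kt_3^k$ is term-by-term Holonomy-equivalent to $\sum_{k\geq 1}1$, which diverges). So your final step, ``the whole factor collapses to $1$,'' does not follow from the Holonomy relation as stated.

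What saves the argument, and what the paper makes explicit while you do not, is a finiteness statement: for generic data and a fixed degree $n$, there are only finitely many horizontal components, hence only finitely many possible positions $x_v$ of trivalent vertices, so the exponents $W\cdot x_v$ take only finitely many values. This allows the infinite sum to be regrouped into a \emph{finite} sum of pieces, each equal to a fixed monomial times a genuine rational function (the $F_\Gamma(H_0)$ of the paper), and only then can the Holonomy relation be applied. Your ``routine general-position argument'' at the end gestures at transversality of $W$ with the vertex configurations, but it does not supply the essential finiteness. Without it, your proof does not establish the lemma. I would also note you did not address why $1$-handle attachments on $\Sigma$ (as opposed to homotopies) leave the pairing unchanged, which in your global set-up is subsumed in choosing a good $W$ but still needs justification; the paper dispatches this separately.
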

\begin{proof}
There are finitely many possibilities for horizontal components in AL-graphs for $(\xi_1,\ldots,\xi_{3n})$. A bordism of $\Sigma$ is decomposed into a sequence of homotopies and attachings of 1-handles that are disjoint from all the horizontal components and all the critical loci of $\xi_i$'s. Since the 1-handle attach does not change the value of $Z_n$, it suffices to check the lemma for a homotopy of $\Sigma$. 

Suppose that a horizontal component $H_0$ is a part of an AL-graph of type $\vec{k}$ and has at least one trivalent vertex. If $S=\{H_1,H_2,\ldots,H_r\}$ is a set of horizontal components for $(\xi_1,\ldots,\xi_{3n})$ having trivalent vertices such that the total number of trivalent vertices in the graphs of $S$ is exactly $2n$ and if $\Gamma$ is a labeled trivalent graph, then the generating function $F_\Gamma(S)$ of counts of AL-graphs from $\Gamma$ whose set of horizontal components agrees with $S$ is a rational function in $\Q(t_1)\otimes\cdots\otimes\Q(t_{3n})$. The reason for this is that there are finitely many ways of joining legs of $H_i$'s to form the uncolored 3-valent graph $\Gamma$. In each joining of a pair of legs, the weighted number of ways of joining the pair by AL-paths is a rational function, as shown in \cite[Lemma~4.2]{Wa2} or in Lemma~\ref{lem:ST(V)} above. Hence $F_\Gamma(S)$ is a finite sum of rational functions. Moreover, there are finitely many possibilities for the set $S$ that contain $H_0$. Therefore, $F_\Gamma(H_0)=\sum_{\{S:H_0\in S\}}F_\Gamma(S)$ is a rational function in $\Q(t_1)\otimes\cdots\otimes\Q(t_{3n})$. 

Now suppose that a homotopy of $\Sigma$ crosses a trivalent vertex $v\in H_0$. Then $F_\Gamma(\Sigma;\xi_1,\ldots,\xi_{3n})$ may change under the homotopy and the terms that may change at the crossing are the terms in $F_\Gamma(H_0)$. More precisely, if the three edges incident to $v$ are labeled $i,j,k$, then all the terms in $F_\Gamma(H_0)$ get multiplied by $t_i^{\ve_i}t_j^{\ve_j}t_k^{\ve_k}$, $\ve_i,\ve_j,\ve_k\in\{-1,1\}$ (depending on the edge orientations). See Figure~\ref{fig:holonomy}. Namely, the change of $F_\Gamma(\Sigma;\xi_1,\ldots,\xi_{3n})$ under the homotopy at the crossing is $(t_i^{\ve_i}t_j^{\ve_j}t_k^{\ve_k}-1)\,F_\Gamma(H_0)$, whose trace vanishes by the Holonomy relation for $\widehat{\Lambda}$-colored graph (Figure~\ref{fig:relations}). This completes the proof.
\begin{figure}
\fig{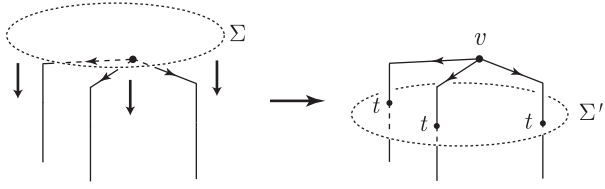}
\caption{Change under a homotopy of $\Sigma$}\label{fig:holonomy}
\end{figure}
\end{proof}

\subsection{Bifurcation of oriented fiberwise Morse functions and their fiberwise gradients}\label{ss:bifurcation}

Let $\kappa:M\to S^1$ be a fiber bundle with fiber diffeomorphic to a connected oriented closed surface. A concordance of an oriented fiberwise Morse function gives a 2-parameter family $\wf:M\times[0,1]\to \R$ of oriented GMF's on a surface parametrized over $S^1\times[0,1]$. 
\begin{Lem}\label{lem:arrange-bd}
After a perturbation of the concordance $\wf$ fixing the endpoints, we may assume that there is a sequence $0<s_1<s_2<\ldots<s_r<1$ such that
\begin{enumerate}
\item $f_s$ is an oriented fiberwise Morse function if $s\neq s_1,s_2,\ldots,s_r$, and
\item at $s=s_i$, there is exactly one locus of $A_2$-singularities (birth-death locus) for $f_{s_i}$ which forms a finite covering over $S^1\times \{s_i\}$ by $\kappa\times\mathrm{id}$. 
\end{enumerate}
\end{Lem}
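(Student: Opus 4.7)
The plan is to apply standard Cerf-theoretic transversality to $\wf$ so that its fiberwise bifurcation set in $S^1\times[0,1]$ is a generic smooth $1$-manifold, and then to \emph{straighten} this set to a union of horizontal circles by means of an ambient isotopy of the base which lifts, through the fiber bundle $\kappa\times\mathrm{id}$, to a compactly supported perturbation of the concordance.

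First, I would consider the bifurcation set
\[ B=\{(c,s)\in S^1\times[0,1]\;:\;f_s|_{\kappa^{-1}(c)}\ \text{is not Morse}\}. \]
Applying jet-transversality to the $2$-parameter family of functions on $\Sigma$ indexed by $(c,s)$, one perturbs $\wf$, fixing $f_0$ and $f_1$, so that $B$ is a smooth properly embedded $1$-manifold in $S^1\times(0,1)$ whose points carry only $A_2$-singularities; codimension-$2$ phenomena (swallowtails $A_3$, double $A_2$-points, and self-intersections of $B$) can be removed by a further generic perturbation. Since $f_0,f_1$ are already Morse, $B$ is disjoint from $S^1\times\{0,1\}$, and the concordance hypothesis forces every component of $B$ to be a closed curve that is not nullhomotopic in the annulus $S^1\times[0,1]$; hence each component is isotopic rel boundary to a multi-cover of the core circle, with a well-defined winding number.

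Next, since $\pi_1(S^1\times[0,1])=\Z$ and any two disjoint essential simple closed curves in an annulus are parallel, one can choose an ambient smooth isotopy $\{\phi_t\}_{t\in[0,1]}$ of $S^1\times[0,1]$, supported in $S^1\times(0,1)$, that simultaneously carries the components of $B$ to a disjoint union
\[ \phi_1(B)=\bigsqcup_{i=1}^{r} S^1\times\{s_i\} \]
of horizontal circles at distinct heights $s_1<s_2<\cdots<s_r$, each counted with multiplicity equal to the winding number of the corresponding original component. By the homotopy lifting property for the fiber bundle $\kappa\times\mathrm{id}:M\times[0,1]\to S^1\times[0,1]$, one lifts $\phi_t$ to a diffeotopy $\Phi_t$ of $M\times[0,1]$ covering $\phi_t$ with $\Phi_t|_{M\times\{0,1\}}=\mathrm{id}$; then
\[ \wf'=\wf\circ\Phi_1^{-1} \]
is still a concordance from $f_0$ to $f_1$ (the boundary values are unchanged), it remains oriented (the orientation data of negative Hessian eigenspaces transforms covariantly under bundle diffeomorphisms), and its bifurcation set in $S^1\times[0,1]$ is precisely $\phi_1(B)$. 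Pulling back along $\kappa\times\mathrm{id}$ exhibits the birth-death locus of $\wf'$ at each $s_i$ as a finite covering over $S^1\times\{s_i\}$.

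The main obstacle is the straightening step: in the presence of possibly many components of $B$ with differing winding numbers, one must construct a single compactly supported ambient isotopy rel boundary that moves all of them simultaneously to disjoint horizontal multi-circles. This is exactly where the special topology of the annulus is used — all disjoint essential closed curves in $S^1\times[0,1]$ are mutually parallel, and a standard innermost/outermost argument orders them by their $[0,1]$-coordinate after isotopy, allowing a single ambient isotopy to perform the straightening. The lifting in Step~3 is then routine given that $\kappa\times\mathrm{id}$ is a fiber bundle, and separating coincident heights is a small additional perturbation.
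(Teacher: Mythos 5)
Your plan works in the base annulus $S^1\times[0,1]$ with the bifurcation set $B$, then lifts an ambient isotopy of the annulus through the bundle. The paper works in the total space $M\times[0,1]$, homotoping each birth-death circle into a slice by a dimension count (the $1$-dimensional birth-death loci and $2$-dimensional critical loci sit in a $4$-manifold, so general position keeps them disjoint), an analogue of Cerf's Beak lemma. These are genuinely different strategies, but yours has a real gap at the first step.

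The step ``codimension-$2$ phenomena (\ldots, double $A_2$-points, and self-intersections of $B$) can be removed by a further generic perturbation'' is false. For a generic $2$-parameter family of functions on $\Sigma$, simultaneous double $A_2$-points occur stably at isolated parameters $(c,s)$ --- they are codimension $2$ in the function space, exactly matching the dimension of the parameter space, so a small generic perturbation only moves them, it does not remove them. Worse, they are sometimes \emph{forced}: if a component $L_0$ of the birth-death locus in $M\times[0,1]$ has $\langle[d\kappa],[L_0]\rangle=k$ with $|k|>1$, its projection to $S^1\times[0,1]$ is a map of degree $k$ on the $S^1$-factor, and since a simple closed curve in an annulus can only have winding number $0$ or $\pm 1$, the image $B_0$ cannot be embedded (except in the non-generic case where the $[0,1]$-coordinate is exactly $k$-periodic). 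Thus $B$ fails to be a $1$-manifold for essentially all concordances with a period-$>1$ birth-death locus, and the subsequent ``disjoint essential curves in an annulus are parallel'' argument has nothing to apply to. Note that the lemma's conclusion, that the locus ``forms a finite covering over $S^1\times\{s_i\}$,'' is phrased precisely to accommodate such period-$>1$ loci, which your embeddedness reduction would wrongly exclude.

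To fix this, you must work upstairs, as the paper does: the birth-death loci are disjoint smooth circles in the $4$-manifold $M\times[0,1]$, and one homotopes $\wf$ (not merely precomposes with a bundle diffeomorphism) so that each such circle lies in a slice $M\times\{s_i\}$, disjoint from the $2$-dimensional critical loci of $\wf$ and from the other birth-death circles; the dimension count $1+2<4$ and $1+1<4$ makes this possible. After this, the projection of each straightened circle to $S^1\times\{s_i\}$ is automatically a finite covering, self-crossings and all.
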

\begin{proof}
The proof is an analogue of the Beak lemma in \cite[Ch.~IV, \S3]{Ce}. Since $M\times [0,1]$ is 4-dimensional, a birth-death locus is 1-dimensional, and a critical locus of $\wf$ is 2-dimensional and unknotted, one may take a homotopy of a birth-death locus into a slice $M\times\{s\}$ that is disjoint from critical loci of $\wf$ and from other birth-death loci. For a birth locus, we may take a homotopy which pulls back the locus to a minimal parameter of $s$ on the locus. For a death locus, we may take a homotopy which pushes forward the locus to a maximal parameter of $s$ on the locus. Then the result is as desired. 
\end{proof}
To prove the invariance of $\wZ_n$, we take a concordance $\wf$ as in Lemma~\ref{lem:arrange-bd} and decompose $[0,1]$ as
\[ [0,s_1-\ve]\cup[s_1-\ve,s_1+\ve]\cup [s_1+\ve,s_2-\ve]\cup \cdots\cup
[s_r-\ve,s_r+\ve]\cup [s_r+\ve,1], \]
for a small $\ve>0$, and show that $\wZ_n$ is invariant at each piece. 

The restriction of $\wf$ on $M\times [s_{i-1}+\ve,s_i-\ve]$ gives a concordance through oriented fiberwise Morse functions. Let $\widetilde{\xi}=\{\xi_s\}_{s\in [0,1]}$ be the fiberwise gradient of $\wf$. We say that a parameter $s\in [s_{i-1}+\ve,s_i-\ve]$ is a {\it bifurcation} for $(\widetilde{f},\widetilde{\xi})=\{(f_s,\xi_s)\}_{s\in[0,1]}$ if $(f_s,\xi_s)$ does {\it not} satisfy one of the following conditions, which were assumed in the definition of the moduli space $\bacalM_{K_2}(\xi)$.
\begin{enumerate}
\item Level-exchange points and $1/1$-intersections occur at different levels for $\kappa$. 
\item Transversality of curves in the graphic.
\item $(f_s,\xi_s)$ satisfies parametrized Morse--Smale condition.
\end{enumerate}
In \cite{Ce, HW}, generic 2-parameter families of smooth functions are studied. According to \cite{Ce, HW}, a generic homotopy between generic loops in the space of Morse functions is as follows.
\begin{Lem}\label{lem:generic-ho}
Let $(\wf,\widetilde{\xi})$ be as above. For a generic choice of a fiberwise gradient $\widetilde{\xi}$ of $\widetilde{f}$ on $M\times[s_{i-1}+\ve,s_i-\ve]$, we may arrange that the possible bifurcations in the family $(\widetilde{f},\widetilde{\xi})$ on $[s_{i-1}+\ve,s_i-\ve]$ are of the following forms:
\begin{enumerate}
\item[(a)] A nondegenerate critical point on a level exchange curve.
\item[(b)] A point where three critical loci have the same value.
\item[(c)] A crossing between two level exchange curves.
\item[(d)] A nondegenerate critical point on a $1/1$-intersection curve.
\item[(e)] A crossing between two $1/1$-intersection curves (not successive).
\item[(f)] A crossing between two $1/1$-intersection curves (successive).
\item[(g)] A crossing between a level exchange curve and a $1/1$-intersection curve.
\end{enumerate}
 (See Figure~\ref{fig:bifurcations} and \ref{fig:bifurcations2}). Moreover, we may assume that no two bifurcations occur simultaneously in each time in $[s_{i-1}+\ve,s_i-\ve]$. 
\end{Lem}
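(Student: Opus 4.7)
The plan is to realize this as a parametric transversality statement in the spirit of Cerf~\cite{Ce} and Hatcher--Wagoner~\cite{HW}, adapted to the fiberwise setting. Fix the concordance $\wf$ produced by Lemma~\ref{lem:arrange-bd}; on each interval $[s_{i-1}+\ve,s_i-\ve]$ there are no birth--death loci, so the critical locus of $\wf$ is a smooth $2$-manifold $C\subset M\times[s_{i-1}+\ve,s_i-\ve]$ fibered by $\kappa\times\mathrm{id}$ over $S^1\times[s_{i-1}+\ve,s_i-\ve]$. Let $\calX(\wf)$ be the infinite-dimensional affine space of fiberwise gradients of $\wf$, equivalently the space of fiberwise Riemannian metrics. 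The goal is to show that for a residual set of $\widetilde{\xi}\in\calX(\wf)$, the bifurcations in the $2$-parameter base are exactly those listed in (a)--(g).

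First I would analyze the three defining conditions separately and identify the codimension-$1$ loci in the base where each fails generically: level-exchange curves for condition~(2), $1/1$-intersection curves for condition~(3), and coincidence curves for condition~(1). Condition~(2) depends only on the $2$-jet of $\wf$ along $C$, and Thom--Mather multijet transversality yields generic transversality of the resulting curves in the graphic. Condition~(3), the parametrized Morse--Smale condition on the ascending and descending manifold loci of $\widetilde{\xi}$, is handled by varying $\widetilde{\xi}$ in $\calX(\wf)$, using the standard density argument for pseudo-gradients as in \cite[Ch.~II]{HW}. Condition~(1) couples the two and is controlled by transversality of the associated evaluation map into the diagonal of $\R$.

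Next I would enumerate the codimension-$2$ strata in the $2$-parameter base that arise as crossings or singularities of these codimension-$1$ curves and match them to the list: a nondegenerate tangency of a level-exchange curve with the $s$-direction gives (a), three critical loci of $\wf$ attaining a common value simultaneously gives (b), a transverse crossing of two level-exchange curves gives (c), the analogous tangency of a $1/1$-intersection curve gives (d), a transverse crossing of two unrelated $1/1$-intersection curves gives (e), a crossing of two $1/1$-intersection curves that share a common index-$1$ critical locus along a broken trajectory gives (f), and the coincidence of a level-exchange point with a $1/1$-intersection point gives (g). For each stratum I would verify by a jet-space calculation that the defining equations cut out a submanifold of codimension exactly~$2$ in the relevant parameter space. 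Multijet transversality applied to $\widetilde{\xi}\in\calX(\wf)$ then forces the $2$-parameter family to meet each stratum transversally in isolated base points and to avoid every stratum of codimension $\geq 3$. The ``no simultaneous bifurcations'' clause follows from the same codimension count, since any locus where two of (a)--(g) occur at once has codimension $\geq 4$ in the parameter space, and after possibly shrinking $\ve$ the remaining isolated bifurcation points separate into distinct $s$-slices.

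The main obstacle I expect is the treatment of $1/1$-intersection curves and, in particular, the case split between (e) and (f). Unlike level-exchange curves, which depend only on the $1$-jet of $\wf$ along $C$, a $1/1$-intersection is a global object in each fiber, recording the existence of a gradient trajectory of $\widetilde{\xi}$ between two index-$1$ critical loci, and can only be moved by perturbing $\widetilde{\xi}$ in the complement of $C$. To obtain enough free parameters I would follow \cite[\S{III}]{HW}, which produces local pseudo-gradient perturbations supported on arcs in a single fiber that move individual $1/1$-trajectories independently; these supply the degrees of freedom needed for the transversality in (d), (e), (f), and (g). The split between (e) and (f) arises because two $1/1$-trajectories that share a common critical locus at adjacent endpoints are forced by the broken-trajectory structure to cross simultaneously, producing a local graphic model distinct from the truly independent case~(e); both strata are of codimension~$2$ but occupy different components of the bifurcation locus and so must be recorded separately.
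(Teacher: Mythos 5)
The paper offers no proof of this lemma; the sentence just before it defers entirely to \cite{Ce, HW} on generic two-parameter families of Morse functions. Your proposal essentially writes out the transversality argument the paper takes for granted, so in substance it follows the same route: stratify the failure locus of conditions (1)--(3) into codimension-one curves in the base, enumerate the codimension-two crossings and tangencies, and invoke multijet transversality together with the pseudo-gradient perturbation lemmas of Hatcher--Wagoner (needed because $1/1$-intersections are global objects and cannot be moved by local jet perturbations of $\wf$). This outline is sound and is the expected expansion of the paper's citation.

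Two points should be tightened. First, (b) and (c) are both transverse crossings of two level-exchange curves; what distinguishes them is whether the two pairs of critical loci share a member (three loci with a common value, case (b)) or are disjoint (four loci, case (c)). Your description makes (b) sound like an independent phenomenon rather than the degenerate crossing type; making the distinction explicit matters because it changes the local model in Figure~\ref{fig:bifurcations2}. Second, the clause that no two bifurcations occur at the same value of $s$ does not follow from your codimension-$\geq 4$ count: that count only excludes two bifurcations happening at the \emph{same} point of $S^1\times[s_{i-1}+\ve,s_i-\ve]$, not at two distinct points with equal $s$-coordinate, and shrinking $\ve$ only trims the domain and does nothing to move interior bifurcation points apart in $s$. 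The correct argument is that after the transversality step the bifurcation locus is a finite set of points whose $s$-coordinates depend smoothly on $\widetilde{\xi}$, and coincidence of any two of them is a codimension-one condition in $\calX(\wf)$, so one further small generic perturbation of $\widetilde{\xi}$ separates the $s$-values. Replace the sentence about shrinking $\ve$ with that.
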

\begin{figure}
\fig{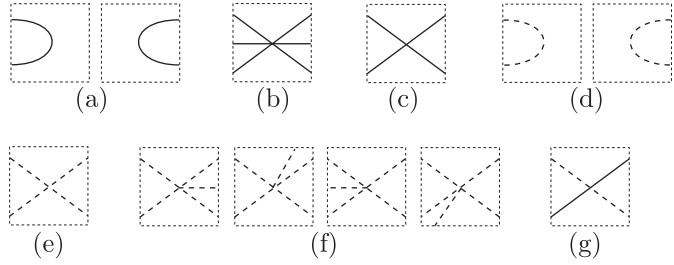}
\caption{Solid lines are level exchange parameters and dashed lines are parameters of $1/1$-intersections}\label{fig:bifurcations}
\end{figure}
\begin{figure}
\fig{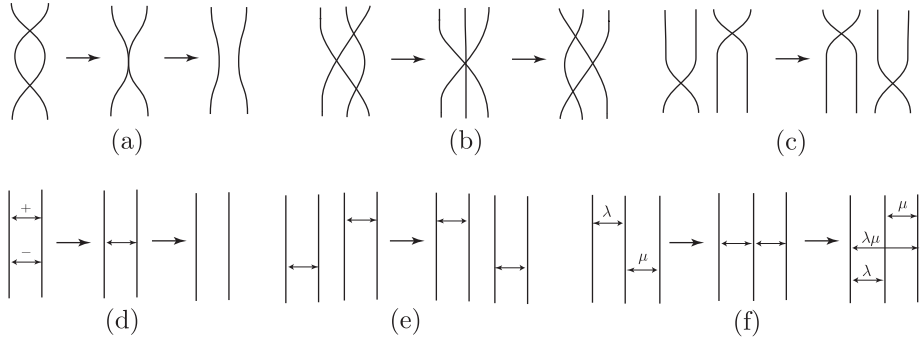}
\caption{Examples of changes in the graphics at the bifurcations. $\lambda,\mu=\pm 1$.}\label{fig:bifurcations2}
\end{figure}
In the rest of this section, let $(\widetilde{f},\widetilde{\xi})=\{(f_s,\xi_s)\}_{s\in [0,1]}$ be a generic concordance arranged as in Lemma~\ref{lem:arrange-bd}, \ref{lem:generic-ho}.

\subsection{Invariance under a homotopy without bifurcations}

The moduli space $\bacalM_{K_2}(\xi)$ is defined by taking an auxiliary decomposition of $M$ into small pieces $W_i^{(j)}$, which are called {\it cells} (\cite[\S{2}]{Wa2}). The decomposition arose from the graphic. See Figure~\ref{fig:cellular-diag} for an example. The decomposition is done roughly as follows. First, partition $M$ by (horizontal) level surfaces $\kappa^{-1}(c)$ for several values of $c$ so that each piece $B$ satisfies either of the following.
\begin{enumerate}
\item $B$ contains exactly one level exchange pair of critical loci and no $1/1$-intersections.
\item $B$ contains no level exchange pair of critical loci but may contain several $1/1$-intersections.
\end{enumerate}
Then partition each piece $B$ further by (vertical) level surfaces of $f$ so that each piece contains one or two components of critical loci and that it contains two if and only if the two loci are the pair forming a crossing in the graphic. 

By using the decomposition, we may identify each AL-path $\gamma$ in $\bacalM_{K_2}(\xi)$ with a point of the direct product of vertical level surfaces on which $\gamma$ intersects. Thus one sees that $\bacalM_{K_2}(\xi)$ is locally modeled on the direct product of the vertical level surfaces. Namely, embed a small neighborhood of $\gamma\in \acalM_{K_2}(\xi)$ into the product of the vertical level surfaces and take the closure in the product. Then glue together such local models for the closure suitably to obtain $\bacalM_{K_2}(\xi)$.

We orient $M\times J$ by $o(M\times J)=o(J)\wedge o(M)$.
\begin{figure}
\fig{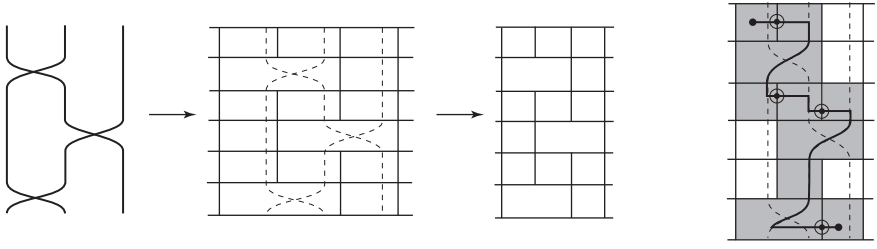}
\caption{A decomposition of $M$ into cells corresponding to the graphic. An AL-path $\gamma$ is characterized by a sequence of points at which $\gamma$ and the vertical walls intersect.}\label{fig:cellular-diag}
\end{figure}

\begin{Lem}\label{lem:nobifurcations}
Let $J=[\alpha,\beta]\subset [0,1]$ be an interval such that the restriction of $(\widetilde{f},\widetilde{\xi})$ on $M\times J$ does not have bifurcations. Then 
\[ \wZ_n(\xi_{\alpha},\xi_2,\ldots,\xi_{3n},\mathfrak{s})=\wZ_n(\xi_{\beta},\xi_2,\ldots,\xi_{3n},\mathfrak{s}). \]
\end{Lem}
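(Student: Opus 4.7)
My plan is to run a one-parameter Stokes/cobordism argument on $J=[\alpha,\beta]$, paralleling the standard proof of invariance in Chern--Simons/Kontsevich perturbation theory and its equivariant refinement by Lescop.

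First, I would build a parametrized equivariant propagator over $J$. Because $(f_s,\xi_s)_{s\in J}$ has no bifurcations, the family of moduli spaces $\bacalM_{K_2}(\xi_s)$ assembles smoothly into a fibered object, and the construction of $Q(\xi_s)$ in Theorem~\ref{thm:propagator} applied fiberwise produces a 5-dimensional $\widehat{\Lambda}$-chain $\widetilde{Q}_1\subset \bConf_{K_2}(M)\times J$ with
\[ \partial\widetilde{Q}_1 \;=\; Q(\xi_\beta) \;-\; Q(\xi_\alpha) \;-\; \widetilde{P} \;-\; \widetilde{T}, \]
where $\widetilde{P}=\bigcup_{s\in J}s_{\xi_s}^*(M)\times\{s\}$ is the trace of the section chain and $\widetilde{T}$ is the trace over $J$ of $\sum_\gamma(-1)^{\ind\gamma}\ve(\gamma)\,t^{p(\gamma)}ST(\gamma_s^\irr)$; no birth or death of closed AL-paths can occur inside $J$ without a bifurcation, so $\widetilde{T}$ is a well-defined $\widehat{\Lambda}$-chain. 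I would analogously produce $\widetilde{Q}'_1$ for the self-loop version by parametrizing the 1-cycle of closed AL-paths of \S\ref{ss:moduli_closed}.

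Next, I would form the parametrized multilinear expression
\[ \widetilde{Z}_n \;:=\; \sum_\Gamma \Tr_\Gamma\bigl\langle\widetilde{Q}_1^\circ,\;Q^\circ(\xi_2)\times J,\;\ldots,\;Q^\circ(\xi_{3n})\times J\bigr\rangle_\Gamma, \]
a 1-chain in $\bConf_{2n}(M)\times J$, after perturbing the family so that all the required intersections in $\bConf_\Gamma(M)\times J$ are transverse and miss the hidden diagonals generically. Applying the Leibniz rule for the multilinear form gives
\[ 0 \;=\; \partial\widetilde{Z}_n \;=\; \bigl[Z_n(\xi_\beta,\xi_2,\ldots,\xi_{3n})-Z_n(\xi_\alpha,\xi_2,\ldots,\xi_{3n})\bigr] \;-\; \widetilde{\Delta}_P \;-\; \widetilde{\Delta}_T \;-\; \widetilde{\Delta}_{\mathrm{hi}}, \]
where $\widetilde{\Delta}_P,\widetilde{\Delta}_T$ are the contributions of $\widetilde{P},\widetilde{T}$ in the first edge-slot summed over $\Gamma$, and $\widetilde{\Delta}_{\mathrm{hi}}$ gathers the contributions of the hidden faces $\partial^{\mathrm{hi}}\bConf_\Gamma(M)\times J$.

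Invariance then reduces to three computations. (i) The chain $\widetilde{P}$ lives on the principal face of $\bConf_{K_2}(M)$, an $S^2$-bundle over $M$; so in $\langle\cdot\rangle_\Gamma$ its contribution localizes in each fiber to a count of local graphs with directed edges $s_{\xi_i}^*$, exactly the moduli $\calM_\Gamma^\loc(-\vec\rho)$ of \cite[Definition~2.7]{Wa1}. Extending $s_{\xi_s}^*$ over $s\in J$ to a family $\rho_1(s):W\to T^vW$ via Lemma~\ref{lem:cframing} and invoking the well-definedness of the anomaly in \cite[Proposition~2.12]{Wa1}, I would identify $\widetilde{\Delta}_P$ with $Z_n^\anomaly(\vec\rho_\beta)-Z_n^\anomaly(\vec\rho_\alpha)$. (ii) A contribution to $\widetilde{\Delta}_T$ would force two vertices of $\Gamma$ to collide on a closed AL-path $\gamma_s^\irr$ of $\xi_s$ while all other edges remain AL-paths of the fixed $\xi_k$ ($k\geq 2$); this is a codimension-one condition in the one-parameter family, so for generic $\{\xi_s\}$ it does not occur, hence $\widetilde{\Delta}_T=0$. (iii) On hidden faces the integrand antisymmetrizes by the vertex-orientation and IHX/AS relations of Definition~\ref{def:jacobi} in the standard way, exactly as in \cite[Proposition~2.12]{Wa1} after Kontsevich; thus $\widetilde{\Delta}_{\mathrm{hi}}=0$. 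Combining the three yields $Z_n(\xi_\beta,\ldots)-Z_n(\xi_\alpha,\ldots)=Z_n^\anomaly(\vec\rho_\beta)-Z_n^\anomaly(\vec\rho_\alpha)$, which is exactly $\wZ_n(\xi_\alpha,\ldots,\mathfrak{s})=\wZ_n(\xi_\beta,\ldots,\mathfrak{s})$.

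The main obstacle is step (i). The chain $s_{\xi_s}^*(M)$ is not the graph of a smooth section of $ST(M)$ but a surgered object containing the lower hemisphere bundles $E_\gamma^-$ over the critical loci of $\xi_s$, so near the principal face one must verify that the surgery on $s_{\xi_s}^*$ does not alter the local count defining the anomaly. The correct interpretation is that the hemisphere contributions correspond to the trivial $\widehat{\Lambda}$-coloring $1\in\widehat{\Lambda}$ along the contracted edge --- matching the trivial coloring in the anomaly formula $\Gamma(1,1,\ldots,1)$ --- and that they integrate cleanly across the family because without bifurcations the critical loci (hence the $E_\gamma^-$) vary smoothly with $s$. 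Once this local identification is secured, steps (ii)--(iii) are routine bookkeeping of signs and orientations.
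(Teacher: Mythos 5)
Your overall plan — parametrize over $J$, build a $1$-chain in $\bConf_{2n}(M)\times J$, and read off invariance from a Leibniz-rule boundary computation — is the same as the paper's, and the construction of the parametrized propagator $\widetilde{Q}_1$ is sound because the absence of bifurcations gives a product decomposition of $M\times J$. However, the attribution of the boundary contributions to IHX and to the anomaly is essentially reversed, and step (i) contains a genuine error.

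Concretely: when only one edge $e$ of $\Gamma$ collapses onto the principal face of $\bConf_\Gamma(M)\times J$ (the situation contributing to your $\widetilde{\Delta}_P$), the two endpoints of $e$ come together at a point $x\in M$ with a limiting direction in $s_{\xi_e}^*(M)\subset ST(M)$, but the remaining $2n-2$ vertices are still spread out in $M$ and the other $3n-1$ edges are still AL-paths of positive length. The configuration therefore does \emph{not} localize to a local graph in a single tangent fiber, and $\widetilde{\Delta}_P$ is not the anomaly difference $Z_n^\anomaly(\vec\rho_\beta)-Z_n^\anomaly(\vec\rho_\alpha)$. In the paper's argument these single-edge-collapse (principal face) contributions are the ones that cancel via the IHX relation, once one notes (as in the proof of Lemma~\ref{lem:indep_sigma}) that the $\widehat\Lambda$-color on the collapsed edge has no nonzero powers of $t$, so the $\widehat\Lambda$-colored IHX relation applies. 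The anomaly instead arises from the deepest hidden face, where a subgraph $T_1$ of a horizontal component collapses and $T_1=\Gamma$ — that is, \emph{all} $2n$ vertices come together — and only then does $\psi_e^{-1}(Q(\xi_e))$ restrict on that face to a purely directional constraint for every $e$ simultaneously, reproducing $\#\calM_\Gamma^\loc(-\vec\xi(J))$. Your step (iii) assigns IHX/AS cancellation to the hidden faces, but the paper's vanishing of proper hidden faces ($T_1\subsetneq\Gamma$, $T_1\neq\Gamma$) is by an orientation-reversing involution (Kontsevich's lemma, \cite[Lemma~6.3--6.5]{Wa1}) or by the emptiness of the moduli of local graphs with a univalent vertex, not by IHX. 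Finally, your step (ii) claiming $\widetilde{\Delta}_T=0$ for generic families by a codimension count does not hold in general: $ST(\gamma^\irr)$ has the same dimension as $s_\xi^*(M)$ inside $\partial\bConf_{K_2}(M)$, so these contributions appear at isolated parameters; the paper instead shows they do not coincide generically with the $4$-valent-vertex degenerations, and handles the dumbbell-graph case separately. You also account for $\partial Q(\xi_k)\times J$ for $k\geq 2$ only implicitly; these slots contribute edge-collapse terms just like the first slot and must be included in the principal-face tally that IHX cancels.
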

\begin{proof}
Since there are no bifurcations in the restriction $(\widetilde{f}_J,\widetilde{\xi}_J)$ of $(\widetilde{f},\widetilde{\xi})$ on $M\times J$, the graphics of $f_\alpha$ and $f_\beta$ with the arrows of 1/1-intersections are isotopic through those in the 1-parameter family. Hence, the auxiliary decompositions of $M$ for $\xi_\alpha$ and $\xi_\beta$ are extended to a decomposition of $M\times J$, which is isomorphic to the product decomposition with $J$. By using the product structure, we may define a moduli space $\bacalM_{K_2}(\widetilde{\xi}_J)$, which gives a cobordism between $\bacalM_{K_2}(\xi_\alpha)$ and $\bacalM_{K_2}(\xi_\beta)$ diffeomorphic to $\bacalM_{K_2}(\xi_\alpha)\times J$. By replacing $\bacalM_{K_2}(\xi_\alpha)$ with $\bacalM_{K_2}(\widetilde{\xi}_J)$ in the definition of $Z_n$, we may define the 1-chain
\[ Z_n(\widetilde{\xi}_J,\xi_2,\ldots,\xi_{3n})=
\sum_\Gamma \Tr_\Gamma\langle Q^\circ(\widetilde{\xi}_J),Q^\circ(\xi_2)\times J,\ldots,Q^\circ(\xi_{3n})\times J\rangle_\Gamma \]
in $C_1(\bConf_{2n}(M)\times J)\otimes_\Q \calA_n(\widehat{\Lambda})$, where $Q^\circ(\widetilde\xi_J)$ is the corresponding bordism between $Q^\circ(\xi_\alpha)$ and $Q^\circ(\xi_\beta)$. The coorientation of $Q^\circ(\widetilde{\xi}_J)$ (resp. $Q^\circ(\xi)_j\times J$) is given so that its restrictions on $\bConf_{K_2}(M)\times \{\alpha\}$ is equivalent to that of $Q^\circ(\xi_\alpha)$ (resp. $Q^\circ(\xi_j)$). 

For $i=2,\ldots,3n$, let $\xi_i(J):M\times J\to TM\times J$ be the vector fields defined by $\xi_i(J)(x,s)=(\xi_i(x),x,s)$. 
Putting $\vec{\xi}(J)=(\widetilde{\xi}_J,\xi_2(J),\ldots,\xi_{3n}(J))$, we define
\[ Z_n^\anomaly(\vec{\xi}(J))
=\sum_\Gamma \#\calM_{\Gamma}^\loc(-\vec{\xi}(J))\,[\Gamma(1,\ldots,1)], \]
where $\#\calM_{\Gamma}^\loc(-\vec{\xi}(J))$ is the moduli space of linear graphs in the rank 3 vector bundle $TM\times J$ over $M\times J$. We shall check that the homology class of $\partial Z_n(\widetilde{\xi}_J,\xi_2,\ldots,\xi_{3n})$ is given by 
\[ Z_n(\xi_\alpha,\xi_2,\ldots,\xi_{3n})-Z_n(\xi_\beta,\xi_2,\ldots,\xi_{3n})+Z_n^\anomaly(\vec{\xi}(J)).\]
This is an analogue of an identity in the proof of \cite[Lemma~10.1]{Wa1}. By assuming this, the proof will complete as follows.
\[ \begin{split}
  &\wZ_n(\xi_\alpha,\xi_2,\ldots,\xi_{3n},\mathfrak{s})-\wZ_n(\xi_\beta,\xi_2,\ldots,\xi_{3n},\mathfrak{s})\\
  &=Z_n(\xi_\alpha,\xi_2,\ldots,\xi_{3n})-Z_n(\xi_\beta,\xi_2,\ldots,\xi_{3n})-Z_n^\anomaly(\vec{\rho}_W(\alpha))+Z_n^\anomaly(\vec{\rho}_W(\beta))\\
  &=Z_n(\xi_\alpha,\xi_2,\ldots,\xi_{3n})-Z_n(\xi_\beta,\xi_2,\ldots,\xi_{3n})+Z_n^\anomaly(\vec{\xi}(J))\\
  &=[\partial Z_n(\widetilde{\xi}_J,\xi_2,\ldots,\xi_{3n})]=0,
\end{split}\]
where $\vec{\rho}_W(\alpha)$ and $\vec{\rho}_W(\beta)$ are the tuples of sections of $T^vW$ that extend $(\xi_\alpha,\xi_2,\ldots,\xi_{3n})$ and $(\xi_\beta,\xi_2,\ldots,\xi_{3n})$ respectively. 

Since $\langle Q^\circ(\widetilde{\xi}_J),Q^\circ(\xi_2)\times J,\ldots,Q^\circ(\xi_{3n})\times J\rangle_\Gamma$ is the intersection form among relative cycles in $(\bConf_\Gamma(M)\times J,\partial(\bConf_\Gamma(M)\times J))$, the boundary of $\langle Q^\circ(\widetilde{\xi}_J),Q^\circ(\xi_2)\times J,\ldots,Q^\circ(\xi_{3n})\times J\rangle_\Gamma$ that are not on $\bConf_{\Gamma}(M)\times\partial J$ consists of configurations for AL-graphs that are in the preimage of $\partial \bConf_{2n}(M)$. According to the description of the strata of $\bConf_n(M)$ in \S\ref{ss:FM}, each of such AL-graphs $I:\Gamma\to M$ is one of the following forms.
\begin{enumerate}
\item (Principal face) For an edge $e\in E(\Gamma)$, the image $I(e)$ of $e$ collapses into a point. 
\item (Hidden face) A subgraph $T_1$ in a horizontal component $T$ of $I(\Gamma)$ collapses into a point. 
\end{enumerate}
Note that it is not necessary to consider an AL-graph with a non self-loop edge forming a closed AL-path in $M$ since such an AL-graph and an AL-graph with one 4-valent vertex do not occur simultaneously in a generic homotopy. If $\Gamma$ is the dumbbell graph $\bigcirc$\kern-.3mm---\kern-.3mm$\bigcirc$, there may be an AL-graph formed by exactly two closed AL-paths sharing a point, which is treated in case (1). In a generic homotopy, there does not exist an AL-graph formed by three closed AL-paths sharing one point. 

The contributions of the case (1) are canceled each other out in $Z_n(\widetilde{\xi}_J,\xi_2,\ldots,\xi_{3n})$ by the IHX relation, for a similar reason as in Chern--Simons perturbation theory for homology 3-spheres (e.g., \cite{Ko, KT, Les1, Wa1}). More precisely, for each labeled trivalent graph $\Gamma$, the contributions in $F_\Gamma(\Sigma;\widetilde{\xi}_J,\xi_2,\ldots,\xi_{3n})$ having such collapsed edge, say, the one labeled $k$, is a rational function with no terms of nonzero exponents of $t_k$, as in the proof of Lemma~\ref{lem:indep_sigma}. Thus the IHX relation for $\widehat{\Lambda}$-colored graphs can be applied to prove that the changes are canceled each other out without any difficulty. See \cite[Lemma~10.1]{Wa1} for detail. 

The contributions of the case (2) for $T_1\neq T$ vanish by an analogue of Kontsevich's lemma \cite[Lemma~2.1]{Ko} (see also \cite[Lemma~6.3, 6.4, 6.5]{Wa1}). Roughly, the contribution of the case (2) is the product of the count of AL-graphs from $\Gamma/T_1$ in $\bConf_{\Gamma/T_1}(M)$ and the count of linear graphs from $T_1$ in $\bConf_{|V(T_1)|}^\loc(\R^3)$. $T_1$ has a univalent vertex or a bivalent vertex. If $T_1$ has a univalent vertex, then the moduli space of linear graphs in $\R^3$ is generically empty by a dimensional reason. If $T_1$ has a bivalent vertex, then there is an orientation reversing involution on $\bConf_{|V(T_1)|}^\loc(\R^3)$, which exchanges the moduli space of $T_1$ and that of another graph $T_1^*$, which is $T_1$ with different labels and different edge-orientations. The cancellation between the two graphs $T_1$ and $T_1^*$ is examined in \cite[Lemma~6.3]{Wa1}. 

The contributions of the case (2) for $T_1=\Gamma$, which are not covered by the previous paragraph, are those correspond to the collapse of whole graph. They contribute as $Z_n^\anomaly(\vec{\xi}(J))$. 
\end{proof}

\subsection{Invariance at bifurcations of level-exchange loci}

\begin{Lem}\label{lem:case(a)}
Let $s_0\in [0,1]$ be a parameter at which the bifurcation of type (a) in Figure~\ref{fig:bifurcations} occurs in $(\wf,\widetilde{\xi})$. For a small number $\ve>0$, 
\[ \wZ_n(\xi_{s_0-\ve},\xi_2,\ldots,\xi_{3n},\mathfrak{s})=\wZ_n(\xi_{s_0+\ve},\xi_2,\ldots,\xi_{3n},\mathfrak{s}). \]
\end{Lem}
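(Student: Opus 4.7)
The plan is to reduce to a local analysis and then apply (a minor modification of) the argument of Lemma~\ref{lem:nobifurcations}. First I would observe that a type (a) bifurcation is entirely confined: after a further perturbation supported in a small box, the family $(\wf,\widetilde{\xi})$ on $M\times[s_0-\ve,s_0+\ve]$ differs from a product family only in a neighborhood $U\times[s_0-\ve,s_0+\ve]$, where $U\subset M$ is a small open set projecting under $\kappa$ to a small arc in $S^1$ around the $c$-coordinate $c_0$ of the tangency point of the level-exchange curve. Moreover the two critical loci involved at the tangency must both have index $1$, since otherwise the descending/ascending loci would be $0$-- or $2$--dimensional and the bifurcation would produce lower-dimensional moduli that do not contribute to $\bacalM_{K_2}(\xi)$.

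Next I would analyze how $\bacalM_{K_2}(\xi_s)$ and the equivariant propagator $Q(\xi_s)$ change across $s=s_0$. The ascending and descending manifold loci of $\widetilde{\xi}$ vary smoothly through the bifurcation (the bifurcation is only of the \emph{level-exchange} curve, not of a $1/1$--intersection curve), so the only discontinuity is in the discrete auxiliary cell decomposition of $M$ used in \cite{Wa2} to describe $\bacalM_{K_2}(\xi)$. Changing the cell decomposition by one Pachner-type move in $U\times J$ produces a bordism $\bacalM_{K_2}(\widetilde{\xi}_J)$, $J=[s_0-\ve,s_0+\ve]$, between $\bacalM_{K_2}(\xi_{s_0-\ve})$ and $\bacalM_{K_2}(\xi_{s_0+\ve})$, and correspondingly a $4$-dimensional $\widehat{\Lambda}$-chain $Q(\widetilde{\xi}_J)\subset \bConf_{K_2}(M)\times J$ interpolating between $Q(\xi_{s_0-\ve})$ and $Q(\xi_{s_0+\ve})$. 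Using this bordism together with the product chains $Q^\circ(\xi_i)\times J$ for $i=2,\ldots,3n$, I define a $1$--chain
\[
  Z_n(\widetilde{\xi}_J,\xi_2,\ldots,\xi_{3n})
  =\sum_\Gamma \Tr_\Gamma \langle Q^\circ(\widetilde{\xi}_J),Q^\circ(\xi_2)\times J,\ldots,Q^\circ(\xi_{3n})\times J\rangle_\Gamma
\]
exactly as in the proof of Lemma~\ref{lem:nobifurcations}.

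I then want to compute $\partial Z_n(\widetilde{\xi}_J,\xi_2,\ldots,\xi_{3n})$ as a $0$--chain. The boundary contributions on $\bConf_\Gamma(M)\times\partial J$ give $Z_n(\xi_{s_0-\ve})-Z_n(\xi_{s_0+\ve})$, while the boundary pieces mapping to $\partial\bConf_{2n}(M)\times J$ split into principal faces, hidden faces with $T_1\neq\Gamma$, and the whole-graph collapse. The first two types cancel by the IHX and Kontsevich vanishing arguments (together with the Holonomy relation, as in Lemma~\ref{lem:indep_sigma}) identically to the no-bifurcation case, and the whole-graph collapse produces $Z_n^{\anomaly}(\vec{\xi}(J))$. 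Continuity of $\xi_s$ and of the section $s_{\xi_s}^\ast(M)$ across $s_0$, which is immediate because only the level ordering and not the gradient itself jumps at a type (a) bifurcation, implies $Z_n^{\anomaly}(\vec{\rho}_W(s_0+\ve))-Z_n^{\anomaly}(\vec{\rho}_W(s_0-\ve))=Z_n^{\anomaly}(\vec{\xi}(J))$, and the claimed invariance of $\wZ_n$ follows.

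The main obstacle I expect is handling the boundary strata of $\bacalM_{K_2}(\widetilde{\xi}_J)$ that arise from the modified cell decomposition in $U\times J$. Concretely, one must show that no new codimension-one strata appear beyond the three types already treated in Lemma~\ref{lem:nobifurcations}; in particular one must rule out an AL-path whose horizontal segment degenerates by being trapped between the two critical value curves as they become tangent at $s_0$. This is ultimately a transversality statement: by genericity of the remaining data $(\Sigma,\kappa_2,\ldots,\kappa_{3n},\xi_2,\ldots,\xi_{3n})$, no AL-graph of generic type counted by $F_\Gamma(\Sigma;\xi_s,\xi_2,\ldots,\xi_{3n})$ has a horizontal segment whose $\kappa$--value coincides with $c_0$ for any $s\in J$, which reduces the bifurcation analysis to the smooth deformation already handled in Lemma~\ref{lem:nobifurcations}.
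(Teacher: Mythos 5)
Your overall strategy is correct and matches the paper's: construct a bordism $\bacalM_{K_2}(\widetilde{\xi}_J)$ over $J=[s_0-\ve,s_0+\ve]$ interpolating between $\bacalM_{K_2}(\xi_{s_0\pm\ve})$, build the $1$-chain $Z_n(\widetilde{\xi}_J,\xi_2,\ldots,\xi_{3n})$, and run the boundary analysis of Lemma~\ref{lem:nobifurcations}. But the crucial step --- how to actually produce the bordism of moduli spaces across the type~(a) degeneration --- is not carried out, and the substitutes you offer do not address the real obstruction.

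First, the assertion that the two critical loci at the tangency must both have index $1$ is unjustified. A level-exchange curve records coincidence of two critical \emph{values}; there is no constraint forcing the indices to be $1$. AL-paths also use vertical segments along critical loci of all indices, so the claim that other indices ``produce lower-dimensional moduli that do not contribute'' does not follow from the stated dimension count. The paper imposes no such restriction. Second, and more importantly, the difficulty at a type~(a) bifurcation is not about an AL-path being geometrically ``trapped'' between two tangent level curves, so the proposed transversality claim about avoiding the $\kappa$-value $c_0$ does not resolve it. The obstruction is combinatorial: the auxiliary cell decomposition of $M$ from \cite{Wa2}, on which the very construction of $\bacalM_{K_2}(\xi)$ is modeled, degenerates at $s_0$ because the horizontal partition by level surfaces of $f$ around the tangent pair changes discretely. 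The paper resolves this by building a decomposition of $M\times J$ in which the two critical loci involved in the tangency are housed inside a single ``big cell'' $W_{(c_0,s_0)}$; all other cells are product cells. This one big cell is exactly what makes the moduli space $\bacalM_{K_2}(\widetilde{\xi}_J)$ and hence $Q(\widetilde{\xi}_J)$ well-defined through $s_0$. Your ``Pachner-type move'' is left entirely unspecified and carries the whole weight of the argument; without an explicit construction of the interpolating decomposition and the resulting moduli space, the bordism you invoke does not yet exist. Finally, on the anomaly term: the paper simply perturbs $\widetilde{\xi}$, keeping the critical loci fixed, so that $\calM_\Gamma^{\loc}$ is empty throughout $J$ and $Z_n^{\anomaly}$ is constant; your version, which computes the difference of anomaly terms as $Z_n^{\anomaly}(\vec{\xi}(J))$, would also work but adds an unnecessary step.
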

\begin{proof}
Let $(u_0,s_0)\in S^1\times [0,1]$ be a point such that the projections of two critical loci of $f_{s_0}|_{\Sigma_{u_0}}$ in the graphic are tangent to each other (see Figure~\ref{fig:bifurcations2} (a)). After a small perturbation of $\widetilde{\xi}$ which does not change the critical loci, we may assume that the moduli space $\calM_\Gamma^\loc(-\xi_{s_0},-\xi_2,\ldots,-\xi_{3n})$ is empty on $[s_0-\ve,s_0+\ve]$ for all $\Gamma$. Hence we may assume that the anomaly correction term $Z_n^\anomaly$ does not change through $[s_0-\ve,s_0+\ve]$. 

Take a neighborhood $U_{(u_0,s_0)}$ of $(u_0,s_0)$ in $S^1\times [0,1]$ so small that there are no $1/1$-intersections in $(\kappa\times\mathrm{id})^{-1}U_{(u_0,s_0)}\subset M\times [0,1]$. Let $J=[s_0-\ve,s_0+\ve]$ and suppose that $\ve$ is sufficiently small so that $[u_0-\ve,u_0+\ve]\times J\subset U_{(u_0,s_0)}$. Moreover, we assume without loss of generality that the curve of the level exchange parameters in $U_{(u_0,s_0)}$ intersects the segment $\{s_0-\ve\}\times [u_0-\ve,u_0+\ve]$ in two points, as in the left hand side of Figure~\ref{fig:bifurcations} (a). 

We may decompose $M\times J\setminus \mathrm{Int}\,(\kappa\times\mathrm{id})^{-1}([u_0-\ve,u_0+\ve]\times J)$ into small pieces as in the proof of Lemma~\ref{lem:nobifurcations} since $\widetilde{\xi}$ has no bifurcation there. The rest $(\kappa\times\mathrm{id})^{-1}([u_0-\ve,u_0+\ve]\times J)$ can be decomposed as follows. For the critical loci of $f_J|_{\kappa^{-1}[u_0-\ve,u_0+\ve]}$ that are not involved in the bifurcation, we may take product cells as in the proof of Lemma~\ref{lem:nobifurcations} each of which includes exactly one critical locus. We consider the closure $W_{(c_0,s_0)}$ of the complement in $M\times J$ of the union of all the cells taken above as one big cell, which includes exactly two critical loci. See Figure~\ref{fig:partition}.
\begin{figure}
\fig{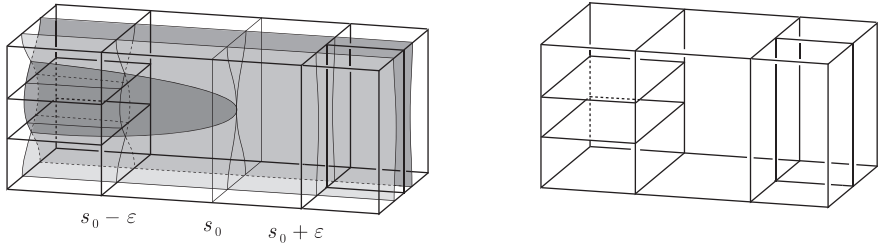}
\caption{The cell decomposition at bifurcation of type (a).}\label{fig:partition}
\end{figure}

Now we have a decomposition of $M\times J$ into cells. Applying a similar method as in the definition of $\bacalM_{K_2}(\xi_{s_0-\ve})$ with the decomposition of $M\times J$ given above, we obtain a moduli space $\bacalM_{K_2}(\widetilde{\xi}_J)$ of AL-paths in $M\times J$, which restricts to $\bacalM_{K_2}(\xi_{s_0-\ve})$ and $\bacalM_{K_2}(\xi_{s_0+\ve})$ at $s=s_0\pm \ve$, without any difficulty. By replacing $\bacalM_{K_2}(\xi_\alpha)$ with $\bacalM_{K_2}(\widetilde{\xi}_J)$ in the definition of $Z_n$, we may define the 1-chain
\[ 
  Z_n(\widetilde{\xi}_J,\xi_2,\ldots,\xi_{3n})=
\sum_\Gamma \Tr_\Gamma\langle Q^\circ(\widetilde{\xi}_J),Q^\circ(\xi_2)\times J,\ldots,Q^\circ(\xi_{3n})\times J\rangle_\Gamma 
\]
in $C_1(\bConf_{2n}(M)\times J)\otimes_\Q \calA_n(\widehat{\Lambda})$. Proof of the identity 
\[ \begin{split}
0=[\partial Z_n(\widetilde{\xi}_J,\xi_2,\ldots,\xi_{3n})]=&Z_n(\xi_{s_0-\ve},\xi_2,\ldots,\xi_{3n})-Z_n(\xi_{s_0+\ve},\xi_2,\ldots,\xi_{3n})
\end{split}\]
is the same as Lemma~\ref{lem:nobifurcations}.
\end{proof}

\begin{Lem}\label{lem:case(b)}
Let $s_0\in [0,1]$ be a parameter at which the bifurcation of type (b) or (c) in Figure~\ref{fig:bifurcations} occurs in $(\wf,\widetilde{\xi})$. For a small number $\ve>0$, 
\[\wZ_n(\xi_{s_0-\ve},\xi_2,\ldots,\xi_{3n},\mathfrak{s})=\wZ_n(\xi_{s_0+\ve},\xi_2,\ldots,\xi_{3n},\mathfrak{s}).\]
\end{Lem}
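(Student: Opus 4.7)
The plan is to follow the same cobordism scheme used in Lemmas~\ref{lem:nobifurcations} and \ref{lem:case(a)}: build a family moduli space $\bacalM_{K_2}(\widetilde{\xi}_J)$ over $J=[s_0-\ve,s_0+\ve]$, assemble the corresponding 1-chain $Z_n(\widetilde{\xi}_J,\xi_2,\ldots,\xi_{3n})$ in $C_1(\bConf_{2n}(M)\times J)\otimes_\Q\calA_n(\widehat{\Lambda})$, and show that its boundary computes the difference $Z_n(\xi_{s_0-\ve},\ldots)-Z_n(\xi_{s_0+\ve},\ldots)$ together with a change of anomaly correction that can be arranged to vanish. The only new ingredient compared with the previous lemmas is the construction of an admissible cell decomposition of $M\times J$ for the bifurcations (b) and (c).

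First, I would localize the bifurcation. Since bifurcations of types (b) and (c) involve only level exchange curves, there is a neighborhood $U_{(u_0,s_0)}\subset S^1\times[0,1]$ of the bifurcation point containing no $1/1$-intersection parameters. After a small perturbation of $\widetilde{\xi}$ that does not move the critical loci, I would ensure that the moduli $\calM_\Gamma^\loc(-\xi_s,-\xi_2,\ldots,-\xi_{3n})$ stays empty over $J$ for every $\Gamma$, so that the anomaly correction $Z_n^\anomaly$ is constant on $J$ and the problem reduces to proving $[\partial Z_n(\widetilde{\xi}_J,\xi_2,\ldots,\xi_{3n})]=0$.

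Next I would fabricate the decomposition of $M\times J$ into cells as in the proof of Lemma~\ref{lem:case(a)}: outside $(\kappa\times\mathrm{id})^{-1}(U_{(u_0,s_0)})$ use the product decomposition permitted by the absence of bifurcations; inside, isolate the critical loci participating in the bifurcation by forming one big cell $W_{(u_0,s_0)}$ that contains them all (three critical loci in case~(b), or the two transversally crossing pairs of level-exchanged loci in case~(c)), while leaving every other critical locus in its own product cell. Because the bifurcation lives purely in the horizontal/vertical wall structure of the graphic and no $1/1$-intersection nor crossing with a critical value of another locus appears on $W_{(u_0,s_0)}$, an AL-path of $\widetilde{\xi}_J$ restricted to $W_{(u_0,s_0)}$ is still described by its intersection data with the vertical walls bounding that cell. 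This produces a moduli space $\bacalM_{K_2}(\widetilde{\xi}_J)$ with the structure of a countable union of smooth compact manifolds with corners, restricting to $\bacalM_{K_2}(\xi_{s_0\pm\ve})$ at the two ends, and hence a 4-chain $Q(\widetilde{\xi}_J)$ in $\bConf_{K_2}(M)\times J$ that restricts to $Q(\xi_{s_0\pm\ve})$ on the ends.

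Finally, with this 1-chain in hand, I would run the boundary analysis verbatim from the proof of Lemma~\ref{lem:nobifurcations}: boundaries coming from the principal faces of $\bConf_{2n}(M)\times J$ cancel in $\Tr_\Gamma$ by the IHX relation (as in the proof of Lemma~\ref{lem:indep_sigma}, the relevant local contribution carries no $t_e$-exponent on the collapsed edge, so the $\widehat{\Lambda}$-coloured IHX relation applies); boundaries from hidden faces with $T_1\subsetneq \Gamma$ vanish by the Kontsevich-type lemma of \cite[Lemma~6.3--6.5]{Wa1}; and the hidden face $T_1=\Gamma$ contributes $Z_n^\anomaly(\vec{\xi}(J))$, which is trivial by the initial arrangement. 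Combined with constancy of the anomaly correction over $J$, this yields $\widehat{Z}_n(\xi_{s_0-\ve},\xi_2,\ldots,\xi_{3n},\mathfrak{s}) =\widehat{Z}_n(\xi_{s_0+\ve},\xi_2,\ldots,\xi_{3n},\mathfrak{s})$. The main obstacle is the first step: checking carefully that the big cell $W_{(u_0,s_0)}$ still supports the local model used to define $\bacalM_{K_2}(\widetilde{\xi}_J)$ as a manifold with corners despite the triple coincidence (case~(b)) or the transversal crossing of two level exchange walls (case~(c)); once this local model is in place, the boundary analysis is a direct transcription of Lemma~\ref{lem:nobifurcations}.
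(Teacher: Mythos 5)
Your proof follows exactly the paper's approach: the paper proves this lemma by the single sentence ``The proof is the same as Lemma~\ref{lem:case(a)},'' with the added note that for type~(c) the cell decomposition at $s=s_0$ has \emph{two} big cells rather than one. Your reproduction of the Lemma~\ref{lem:case(a)} scheme — localizing, ensuring the anomaly term is constant over $J$, building a decomposition with big cell(s) around the bifurcating loci, forming the family moduli space and the $1$-chain $Z_n(\widetilde{\xi}_J,\ldots)$, and re-running the boundary analysis of Lemma~\ref{lem:nobifurcations} — is the intended argument. The one small discrepancy is your claim to use a single big cell for case~(c): a crossing between two level exchange curves involves two pairs of critical loci that generically sit at distinct $f$-levels in the fiber, so the paper's cell decomposition, which partitions by vertical level surfaces of $f$, naturally isolates each pair into its \emph{own} big cell, giving two big cells at $s=s_0$. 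This doesn't break your argument in substance (either cell arrangement supports the same family moduli space and boundary analysis), but it is worth matching the paper's decomposition for consistency with how the cells were defined in the first place.
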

\begin{proof}
The proof is the same as Lemma~\ref{lem:case(a)}. Note that for (c), there are two big cells at $s=s_0$ in the decomposition of $M$ into cells.
\end{proof}


\subsection{Invariance at bifurcations of $1/1$-intersection loci}

\begin{Lem}\label{lem:case(d)}
Let $s_0\in [0,1]$ be a parameter at which the bifurcation of type (d) in Figure~\ref{fig:bifurcations} occurs in $(\wf,\widetilde{\xi})$. For a small number $\ve>0$, 
\[ \wZ_n(\xi_{s_0-\ve},\xi_2,\ldots,\xi_{3n},\mathfrak{s})=\wZ_n(\xi_{s_0+\ve},\xi_2,\ldots,\xi_{3n},\mathfrak{s}).\]
\end{Lem}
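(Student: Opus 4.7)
The plan is to follow the template of Lemmas~\ref{lem:nobifurcations} and \ref{lem:case(a)}: construct a cobordism moduli space $\bacalM_{K_2}(\widetilde{\xi}_J)$ of AL-paths over $M\times J$ with $J=[s_0-\ve,s_0+\ve]$, assemble the 1-chain
\[ Z_n(\widetilde{\xi}_J,\xi_2,\ldots,\xi_{3n})=\sum_\Gamma \Tr_\Gamma\langle Q^\circ(\widetilde{\xi}_J),Q^\circ(\xi_2)\times J,\ldots,Q^\circ(\xi_{3n})\times J\rangle_\Gamma \]
in $C_1(\bConf_{2n}(M)\times J)\otimes_\Q \calA_n(\widehat{\Lambda})$, and verify that the class of its boundary equals $Z_n(\xi_{s_0-\ve},\xi_2,\ldots,\xi_{3n})-Z_n(\xi_{s_0+\ve},\xi_2,\ldots,\xi_{3n})$ modulo the IHX-cancelled principal-face contributions, the Kontsevich-vanishing hidden-face contributions, and the whole-graph collapse anomaly $Z_n^\anomaly(\vec{\xi}(J))$. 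By a preliminary perturbation of $\widetilde{\xi}$ supported away from the critical loci one arranges $\calM_\Gamma^\loc(-\vec{\xi}(J))=\emptyset$ for all $\Gamma$, as in Lemma~\ref{lem:case(a)}, so that the anomaly term contributes trivially on $J$.

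The feature specific to case (d) is that the cell decomposition of $M$ used to model AL-paths changes topologically at $s=s_0$: denoting by $(u_0,s_0)\in S^1\times J$ the bifurcation point, a pair of $1/1$-intersections $x_+(s),x_-(s)$ between two fixed index-$1$ critical loci $p$ and $q$ is born (or dies) for $s$ on one side of $s_0$ in a neighborhood of $u_0$. I would choose a small neighborhood $U=U_{(u_0,s_0)}\subset S^1\times J$ of $(u_0,s_0)$ containing no level-exchange parameters, and take a cell decomposition of $M\times J$ that is a product along $J$ outside $(\kappa\times\mathrm{id})^{-1}(U)$, while the region over $U$ is treated as a single ``big'' $4$-dimensional cell $W_{(u_0,s_0)}$ containing the degenerate tangency locus. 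This is the direct analogue of the construction in Lemma~\ref{lem:case(a)}, but with the big cell arranged around a bifurcation of $1/1$-intersections rather than of level-exchange loci.

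The essential geometric input is the sign cancellation of newly born $1/1$-intersections: since $x_+(s)$ and $x_-(s)$ bifurcate from a tangency whose algebraic intersection number (in the sense of \S\ref{ss:coori}) is zero, one has $\ve(x_+(s))=-\ve(x_-(s))$. Consequently every AL-graph configuration that uses $x_+(s)$ as a horizontal segment in a traverse of $W_{(u_0,s_0)}$ is paired with the otherwise identical configuration using $x_-(s)$, and the two contribute with opposite signs to $\langle Q^\circ(\widetilde{\xi}_J),\ldots\rangle_\Gamma$. Summing over such pairs shows that the new AL-paths assemble into a well-defined 1-chain on $J$ with no spurious boundary along $\{s_0\}\subset J$, so that the boundary of $Z_n(\widetilde{\xi}_J,\xi_2,\ldots,\xi_{3n})$ concentrates on $\bConf_{2n}(M)\times\partial J$ together with the usual configuration-space boundaries in each fiber.

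The remainder of the boundary analysis is verbatim as in Lemma~\ref{lem:nobifurcations}: principal-face contributions are cancelled by the IHX relation for $\widehat{\Lambda}$-colored graphs, and hidden-face contributions vanish by the analogue of Kontsevich's lemma except for whole-graph collapses which account for $Z_n^\anomaly(\vec{\xi}(J))$, already arranged to be zero. The main obstacle is the verification that the big cell $W_{(u_0,s_0)}$ admits a clean compactification of its associated AL-path moduli so that the tangency stratum produces no spurious internal boundary component; I expect to handle this via a local normal-form computation for the fiberwise gradient near the Morse critical point of the $1/1$-intersection curve, combined with the sign cancellation described above.
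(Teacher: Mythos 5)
Your proposal is in the right spirit but diverges from the paper's argument in two notable ways. First, the ``big cell'' construction of Lemma~\ref{lem:case(a)} is \emph{not} needed here and is based on a misreading of what the cells track: the cell walls of the decomposition in \S\ref{s:invariance} are determined by the level-exchange structure of the graphic (horizontal walls separate level-exchange pairs from $1/1$-intersections; vertical walls are level surfaces of $f$), and a type-(d) bifurcation leaves the level-exchange data untouched near $(u_0,s_0)$. Consequently the paper simply states that the product decomposition of $M\times J$ ``as in the proof of Lemma~\ref{lem:nobifurcations}'' works with no trouble; the big cell was essential in case (a) precisely because there the bifurcation does change the level-exchange configuration, and hence the walls. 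Your insistence on a $4$-dimensional big cell around the tangency would force you to build a separate tracking mechanism for AL-paths inside it, which is extra work the product decomposition avoids.

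Second, your sign cancellation $\ve(x_+(s))=-\ve(x_-(s))$ is correct, but it is a $0$-dimensional shadow of the operative geometric fact, namely that the $1/1$-intersection locus $\wcalD_p(\widetilde{\xi})\cap\wcalA_q(\widetilde{\xi})\subset M\times J$ is a smooth submanifold (by the \emph{parametrized Morse--Smale} condition assumed on $\widetilde{\xi}$) whose projection to $S^1\times J$ folds at $(u_0,s_0)$. The paper invokes this transversality directly to build $\bacalM_{K_2}(\widetilde{\xi}_J)$ as a smooth cobordism with no internal boundary over $\{s_0\}$; the sign flip you describe is a consequence, not the mechanism. Your approach could be made to work, but you would have to carry out the local normal-form verification you postpone, and you would in effect be re-deriving the consequence of parametrized Morse--Smale by hand. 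What the paper's route buys is economy: once the product decomposition and the transversality hypothesis are in place, the rest of the argument is verbatim Lemma~\ref{lem:nobifurcations}, which is exactly what the paper says.
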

\begin{proof}
Let $(u_0,s_0)\in S^1\times [0,1]$ be a point such that the bifurcation of type (d) occurs. After a small perturbation of $\widetilde{\xi}$ which does not change the bifurcation parameters, we may assume that the moduli space $\calM_\Gamma^\loc(-\xi_{s_0},-\xi_2,\ldots,-\xi_{3n})$ is empty on $[s_9-\ve,s_0+\ve]$ for all $\Gamma$. Hence we may assume that the anomaly correction term $Z_n^\anomaly$ does not change through $[s_0-\ve,s_0+\ve]$. 

Take a neighborhood $U_{(u_0,s_0)}$ of $(u_0,s_0)$ in $S^1\times [0,1]$ so small that there are no other $1/1$-intersections nor level exchange bifurcations in $(\kappa\times\mathrm{id})^{-1}U_{(u_0,s_0)}\subset M\times [0,1]$. Let $J=[s_0-\ve,s_0+\ve]$ and suppose that $\ve$ is sufficiently small so that $[u_0-\ve,u_0+\ve]\times J\subset U_{(u_0,s_0)}$. We assume without loss of generality that the curve of the $1/1$-intersection parameters in $U_{(u_0,s_0)}$ intersects the segment $\{s_0-\ve\}\times [u_0-\ve,u_0+\ve]$ in two points, as in the left hand side of Figure~\ref{fig:bifurcations} (d). Then we may take a decomposition of $M\times J$ into pieces as in the proof of Lemma~\ref{lem:nobifurcations} with no trouble.

By the parametrized Morse--Smale condition for $\widetilde{\xi}$, we may apply a similar method as in the definition of $\bacalM_{K_2}(\xi_{s_0-\ve})$ with the decomposition of $M\times J$ given above. Then we obtain a moduli space $\bacalM_{K_2}(\widetilde{\xi}_J)$ of AL-paths in $M\times J$, which restricts to $\bacalM_{K_2}(\xi_{s_0-\ve})$ and $\bacalM_{K_2}(\xi_{s_0+\ve})$ at $s=s_0\pm \ve$. By replacing $\bacalM_{K_2}(\xi_\alpha)$ with $\bacalM_{K_2}(\widetilde{\xi}_J)$ in the definition of $Z_n$, we may define the 1-chain
\[ 
Z_n(\widetilde{\xi}_J,\xi_2,\ldots,\xi_{3n})=
\sum_\Gamma \Tr_\Gamma\langle Q^\circ(\widetilde{\xi}_J),Q^\circ(\xi_2)\times J,\ldots,Q^\circ(\xi_{3n})\times J\rangle_\Gamma
\]
in $C_1(\bConf_{2n}(M)\times J)\otimes_\Q \calA_n(\widehat{\Lambda})$. Proof of the identity
\[ \begin{split}
0=[\partial Z_n(\widetilde{\xi}_J,\xi_2,\ldots,\xi_{3n})]=&Z_n(\xi_{s_0-\ve},\xi_2,\ldots,\xi_{3n})-Z_n(\xi_{s_0+\ve},\xi_2,\ldots,\xi_{3n})
\end{split}\]
is the same as Lemma~\ref{lem:nobifurcations}.
\end{proof}

\begin{Lem}\label{lem:case(e)}
Let $s_0\in [0,1]$ be a parameter at which the bifurcation of type (e) or (g) in Figure~\ref{fig:bifurcations} occurs in $(\wf,\widetilde{\xi})$. For a small number $\ve>0$, 
\[ \wZ_n(\xi_{s_0-\ve},\xi_2,\ldots,\xi_{3n},\mathfrak{s})=\wZ_n(\xi_{s_0+\ve},\xi_2,\ldots,\xi_{3n},\mathfrak{s}).\]
\end{Lem}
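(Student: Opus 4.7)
The plan is to reproduce the template of Lemmas~\ref{lem:case(a)} and \ref{lem:case(d)}. Let $(u_0,s_0)\in S^1\times[0,1]$ be the bifurcation point in the graphic. First I would apply a small perturbation of $\widetilde{\xi}$ (not affecting the bifurcation parameters) so that $\calM_\Gamma^\loc(-\xi_s,-\xi_2,\ldots,-\xi_{3n})$ is empty for $s\in J:=[s_0-\ve,s_0+\ve]$ and every $\Gamma$, making the anomaly term $Z_n^\anomaly$ constant on $J$. Then I would choose a neighborhood $U_{(u_0,s_0)}\subset S^1\times[0,1]$ of the crossing so small that it contains no other bifurcation parameters, and take $\ve$ small enough that $[u_0-\ve,u_0+\ve]\times J\subset U_{(u_0,s_0)}$.

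Next I would construct a cell decomposition of $M\times J$ adapted to $\widetilde{\xi}_J$. The decisive feature of cases (e) and (g) is that the two intersecting curves in the graphic do not share a common horizontal flow-line segment: in (e) the two $1/1$-intersections involve disjoint pairs of critical loci (the meaning of \emph{non-successive}), and in (g) one curve records a swap of heights of two critical loci while the other records a transverse flow connection between distinct loci. Consequently, each local $1/1$-intersection event may be enclosed in product cells as in Lemma~\ref{lem:case(d)}, and any local level-exchange event may be enclosed in one big cell containing both swapping loci as in Lemma~\ref{lem:case(a)}; these local cells can be chosen mutually disjoint. Outside $(\kappa\times\mathrm{id})^{-1}([u_0-\ve,u_0+\ve]\times J)$ the vector field $\widetilde{\xi}_J$ has no bifurcations, so the product decomposition of Lemma~\ref{lem:nobifurcations} applies, and the pieces glue together to a global decomposition of $M\times J$.

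Using this decomposition, I would define the moduli space $\bacalM_{K_2}(\widetilde{\xi}_J)$ of AL-paths over $M\times J$ by the procedure used in the construction of $\bacalM_{K_2}(\xi)$; it restricts at $s=s_0\pm\ve$ to $\bacalM_{K_2}(\xi_{s_0\pm\ve})$. Replacing $\bacalM_{K_2}(\xi_\alpha)$ with $\bacalM_{K_2}(\widetilde{\xi}_J)$ in the definition of $Z_n$ yields a $1$-chain $Z_n(\widetilde{\xi}_J,\xi_2,\ldots,\xi_{3n})$ in $C_1(\bConf_{2n}(M)\times J)\otimes_\Q\calA_n(\widehat{\Lambda})$. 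The boundary analysis is verbatim that of Lemma~\ref{lem:nobifurcations}: principal-face contributions cancel by the IHX relation for $\widehat{\Lambda}$-colored graphs, hidden-face contributions for proper subgraphs $T_1\subsetneq\Gamma$ vanish by an analogue of Kontsevich's lemma, and the $T_1=\Gamma$ contribution equals $Z_n^\anomaly(\vec{\xi}(J))$. Combined with the constancy of the anomaly term from the first step, this gives
\[
[\partial Z_n(\widetilde{\xi}_J,\xi_2,\ldots,\xi_{3n})]=Z_n(\xi_{s_0-\ve},\xi_2,\ldots,\xi_{3n})-Z_n(\xi_{s_0+\ve},\xi_2,\ldots,\xi_{3n})=0,
\]
yielding the desired equality of $\wZ_n$.

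The hard part will be to verify rigorously that the two bifurcating curves in (e) and (g) can indeed be enclosed in disjoint cells, i.e.\ that the non-successive hypothesis in (e) and the mixed-type nature of (g) actually prevent the combinatorial interference that would occur in the successive case (f). Concretely, one has to check that after a generic choice of vertical partition, the cells supporting the two events can be chosen disjoint, and that the moduli space $\bacalM_{K_2}(\widetilde{\xi}_J)$ fits together across the cell walls with the expected smooth structure. Once this geometric separation is in place, no new cancellation beyond IHX and Kontsevich's vanishing is required, and the bordism argument closes the invariance proof.
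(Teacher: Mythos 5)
Your proposal is correct and follows the same route as the paper: the paper's proof of this lemma consists of the single sentence ``The proof is the same as Lemma~\ref{lem:case(d)},'' and you have simply unwound that template in full — perturbing $\widetilde{\xi}$ to freeze the anomaly term on $J$, localizing the bifurcation in a small box $U_{(u_0,s_0)}$, building the adapted cell decomposition of $M\times J$ (combining the product cells of Lemma~\ref{lem:case(d)} for the $1/1$-intersection events with the ``big cell'' of Lemma~\ref{lem:case(a)} for the level-exchange event in case (g)), constructing $\bacalM_{K_2}(\widetilde{\xi}_J)$ over it, and running the same boundary analysis (IHX for principal faces, Kontsevich vanishing for hidden faces, anomaly for the all-collapse face). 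Your closing observation correctly identifies why cases (e) and (g) are grouped with (d): the two crossing curves do not interact along a single AL-path segment, so no new cancellation mechanism is needed — in contrast with case (f), which the paper treats separately precisely because the successive $1/1$-intersections create extra boundary pieces that must be matched by a dedicated gluing.
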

\begin{proof}
The proof is the same as Lemma~\ref{lem:case(d)}.
\end{proof}

\begin{Lem}\label{lem:case(f)}
Let $s_0\in [0,1]$ be a parameter at which the bifurcation of type (f) in Figure~\ref{fig:bifurcations} occurs in $(\wf,\widetilde{\xi})$. For a small number $\ve>0$, 
\[ \wZ_n(\xi_{s_0-\ve},\xi_2,\ldots,\xi_{3n},\mathfrak{s})=\wZ_n(\xi_{s_0+\ve},\xi_2,\ldots,\xi_{3n},\mathfrak{s}). \]
\end{Lem}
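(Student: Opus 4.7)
The plan is to follow the same overall strategy used in Lemmas~\ref{lem:nobifurcations}, \ref{lem:case(a)} and \ref{lem:case(d)}: build an appropriate cobordism moduli space $\bacalM_{K_2}(\widetilde{\xi}_J)$ on $M\times J$ for $J=[s_0-\ve,s_0+\ve]$ whose restrictions to the endpoints of $J$ recover $\bacalM_{K_2}(\xi_{s_0\pm\ve})$, then use it to define a 1-chain $Z_n(\widetilde{\xi}_J,\xi_2,\ldots,\xi_{3n})$ in $C_1(\bConf_{2n}(M)\times J)\otimes_\Q \calA_n(\widehat{\Lambda})$ whose vanishing boundary class yields the desired identity. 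By perturbing $\widetilde{\xi}$ away from the critical loci (which is allowed since the bifurcation only concerns flow-line combinatorics), I may again assume $\calM_\Gamma^\loc(-\xi_s,-\xi_2,\ldots,-\xi_{3n})$ is empty throughout $J$, so the anomaly correction term is constant and it suffices to prove $[\partial Z_n(\widetilde{\xi}_J,\xi_2,\ldots,\xi_{3n})]=Z_n(\xi_{s_0-\ve},\ldots)-Z_n(\xi_{s_0+\ve},\ldots)$.

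The genuinely new phenomenon at type~(f) is that, unlike in cases (d), (e), (g), the two $1/1$-intersection curves share a critical locus: schematically, one curve records a flow line from an index-1 locus $p$ down to an intermediate index-1 locus $q$, and the other from $q$ down to $r$, so at $s=s_0$ they meet at a single $u_0\in S^1$ where the composition $p\to q\to r$ is a broken trajectory of $\xi_{s_0}$ along $\kappa^{-1}(u_0)$. I would therefore choose a neighborhood $U_{(u_0,s_0)}$ small enough that it contains no other $1/1$-intersections and no level-exchange loci, and decompose $M\times J\setminus \mathrm{Int}\,(\kappa\times\mathrm{id})^{-1}(U_{(u_0,s_0)})$ into product cells as in Lemma~\ref{lem:nobifurcations}, while replacing the region over $U_{(u_0,s_0)}$ by one ``big cell'' $W_{(u_0,s_0)}$ containing all three critical loci $p,q,r$ involved in the successive crossing. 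The moduli space $\bacalM_{K_2}(\widetilde{\xi}_J)$ is then constructed cell by cell exactly as in \cite{Wa2}, with the only nontrivial local model being inside $W_{(u_0,s_0)}$: an AL-path crossing $W_{(u_0,s_0)}$ is determined by its sequence of intersection points with the interior vertical walls, and the parameter $s$ is an additional coordinate. Standard transversality then makes $\bacalM_{K_2}(\widetilde{\xi}_J)$ a manifold with corners one dimension higher than $\bacalM_{K_2}(\xi_{s_0\pm\ve})$.

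The main obstacle is to verify that the would-be codimension-one stratum at $s=s_0$ parametrizing AL-paths that factor through the broken trajectory $p\to q\to r$ is an interior stratum of the cobordism $\bacalM_{K_2}(\widetilde{\xi}_J)$, so that it does not contribute to $\partial Z_n(\widetilde{\xi}_J,\ldots)$. The key observation is that at the bifurcation an AL-path that, for $s<s_0$, enters $W_{(u_0,s_0)}$ along the $p\to q$ flow line, leaves along a vertical segment in $q$, and re-enters along $q\to r$ is matched, for $s>s_0$, by the ``swapped'' AL-path in which the relative heights of the two $1/1$-intersections have been exchanged; these two families of AL-paths meet smoothly through the broken trajectory at $s=s_0$ and form a single connected stratum of $\bacalM_{K_2}(\widetilde{\xi}_J)$ transverse to the $s=s_0$ slice, with orientations matching via the coorientation conventions of \S\ref{ss:coori}. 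Once this ``handle-slide'' gluing has been verified, the remaining boundary analysis --- principal-face cancellations via IHX and hidden-face vanishing by Kontsevich's lemma with the collapsed-graph contribution absorbed into $Z_n^\anomaly$ --- is identical to the proof of Lemma~\ref{lem:nobifurcations}, and the conclusion follows.
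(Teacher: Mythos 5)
Your overall framework is the right one (cobordism moduli space on $M\times J$, anomaly term frozen by perturbation, principal and hidden face analysis as in Lemma~\ref{lem:nobifurcations}), but the central claim about how the boundary at $s=s_0$ is resolved does not hold up, and it is the whole content of the lemma. You assert that the family of AL-paths whose edge passes through the $p\to q$ flow line, then descends vertically along $q$, then passes through the $q\to r$ flow line, continues smoothly across $s=s_0$ as a ``swapped'' family with the relative $u$-positions of the two $1/1$-intersections exchanged, forming a single stratum transverse to $s=s_0$. This cannot happen. An AL-path is \emph{descending}, so to traverse the $p\to q$ intersection (at parameter $c_1$) and then the $q\to r$ intersection (at parameter $c_2$) it must have $c_1\geq c_2$. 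The bifurcation of type (f) is precisely a transverse crossing of the two $1/1$-intersection curves in the $(u,s)$-plane, so the sign of $c_1-c_2$ flips across $s_0$: the stratum you describe exists on only one side of $s_0$, degenerating at $s=s_0$ to an edge with a flow line broken at $q$. There genuinely is a codimension-one boundary of the trajectory space at $s=s_0$; it is not an interior stratum.

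The boundary is cancelled not by a ``swapped'' copy of the same combinatorial type, but by a \emph{different} type that exists only on the other side of $s_0$: AL-graphs whose relevant edge contains the single horizontal segment labelled $\lambda\mu$ in Figure~\ref{fig:bifurcations2}~(f), i.e.\ the direct flow line from $p$ to $r$ which the handle-slide at $s_0$ creates (or annihilates). Both types limit at $s=s_0$ to the same broken trajectory $p\to q\to r$, and because the sign of the composite $1/1$-intersection is $\lambda\mu$ (the product of the two signs), the two boundary contributions cancel. This distinction is exactly what separates type~(f) from type~(e): in~(e) the two intersections involve disjoint pairs of critical loci, there is no shared broken trajectory, and your picture of the stratum continuing across $s_0$ is essentially correct; in~(f) the shared locus $q$ forces a genuine birth-and-death of a composite flow line, and the cancellation must be argued as above. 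Without this matching your $[\partial Z_n(\widetilde{\xi}_J,\xi_2,\ldots,\xi_{3n})]$ would pick up an unexplained term at $s=s_0$, and the proof does not close.
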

\begin{proof}
After a small perturbation of $\widetilde{\xi}$ which does not change the bifurcation parameters, we may assume that the moduli space $\calM_\Gamma^\loc(-\xi_{s_0},-\xi_2,\ldots,-\xi_{3n})$ is empty on $[s_0-\ve,s_0+\ve]$ for all $\Gamma$. Hence we may assume that the anomaly correction term $Z_n^\anomaly$ does not change through $[s_0-\ve,s_0+\ve]$. 

Let $J=[s_0-\ve,s_0+\ve]$. As in the proof of Lemma~\ref{lem:case(d)}, we may define a moduli space $\bacalM_{K_2}(\widetilde{\xi}_J)$ of AL-paths in $M\times \{s\}$, $s\in J$, which restricts to $\bacalM_{K_2}(\xi_{s_0-\ve})$ and $\bacalM_{K_2}(\xi_{s_0+\ve})$ at $s=s_0\pm \ve$. This gives rise to the 1-chain
\[ Z_n(\widetilde{\xi}_J,\xi_2,\ldots,\xi_{3n})=
\sum_\Gamma \Tr_\Gamma\langle Q^\circ(\widetilde{\xi}_J),Q^\circ(\xi_2)\times J,\ldots,Q^\circ(\xi_{3n})\times J\rangle_\Gamma\]
in $C_1(\bConf_{2n}(M)\times J)\otimes_\Q \calA_n(\widehat{\Lambda})$. Note that the the boundary of the chain\\ $\langle Q^\circ(\widetilde{\xi}_J),Q^\circ(\xi_2)\times J,\ldots,Q^\circ(\xi_{3n})\times J\rangle_\Gamma$ consist only of the forms given in the proof of Lemma~\ref{lem:nobifurcations} although the trajectory spaces for the $1/1$-intersections involved in the bifurcation may have boundaries at $s=s_0$. The AL-graphs in $s<s_0$ that may arrive at the boundary at $s=s_0$ are those with an edge $e$ including both the horizontal segments labeled $\lambda$ and $\mu$ in the left side of Figure~\ref{fig:bifurcations2} (f). They can be paired with the AL-graphs in $s>s_0$ with an edge $e'$ including the horizontal segment labeled $\lambda\mu$ in the right side of Figure~\ref{fig:bifurcations2} (f). 
\[ \fig{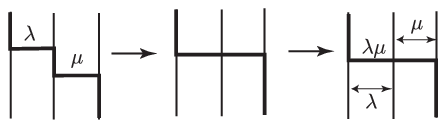} \]
Since these edges have the same sign, the boundaries at $s=s_0$ are cancelled with each other. Hence we have 
\[ \begin{split}
0=[\partial Z_n(\widetilde{\xi}_J,\xi_2,\ldots,\xi_{3n})]=&Z_n(\xi_{s_0-\ve},\xi_2,\ldots,\xi_{3n})-Z_n(\xi_{s_0+\ve},\xi_2,\ldots,\xi_{3n}).
\end{split}\]
\end{proof}

\subsection{Invariance at birth-death locus}

\begin{Lem}\label{lem:bd}
Let $s_0\in [0,1]$ be a parameter at which a birth-death bifurcation occurs in $(\wf,\widetilde{\xi})$. For a small number $\ve>0$, 
\[ \wZ_n(\xi_{s_0-\ve},\xi_2,\ldots,\xi_{3n},\mathfrak{s})=\wZ_n(\xi_{s_0+\ve},\xi_2,\ldots,\xi_{3n},\mathfrak{s}).\]
\end{Lem}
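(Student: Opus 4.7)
The plan is to extend the cobordism argument of Lemmas~\ref{lem:nobifurcations}--\ref{lem:case(f)} to the birth-death setting. First I would set $J=[s_0-\ve,s_0+\ve]$ and, after a perturbation of $\widetilde{\xi}$ supported away from the birth-death locus, arrange that $\calM_\Gamma^\loc(-\xi_s,-\xi_2,\ldots,-\xi_{3n})$ is empty on $J$ for all $\Gamma$; using the regularization $\hat{\xi}_s=\xi_s+\rho\,\mathrm{grad}\,\kappa$ near the birth-death locus (as in Remark~\ref{rem:U}) to make the relevant section well-defined, this makes the anomaly correction $Z_n^\anomaly$ constant on $J$. It therefore suffices to construct a $1$-chain in $C_1(\bConf_{2n}(M)\times J)\otimes_\Q\calA_n(\widehat{\Lambda})$ whose boundary equals $Z_n(\xi_{s_0-\ve},\xi_2,\ldots,\xi_{3n})-Z_n(\xi_{s_0+\ve},\xi_2,\ldots,\xi_{3n})$.

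The new input is the behaviour near the birth-death locus $\Lambda\subset M\times\{s_0\}$. By Lemma~\ref{lem:arrange-bd} and the concordance assumption, $\Lambda$ is a $1$-submanifold mapping onto a finite cover of $S^1\times\{s_0\}$ under $\kappa\times\mathrm{id}$. In a tubular neighborhood $N$ of $\Lambda$ I would use the standard birth-death normal form, in which each fiberwise slice of $f_s$ is $x^3+(s-s_0)x$ plus a nondegenerate quadratic in a transverse variable: for $s<s_0$ there are no critical points in $N$, and for $s>s_0$ there is a pair of newborn critical loci of adjacent indices $k,k+1$ joined by a short descending flow segment $\sigma(s)$ whose length tends to $0$ as $s\downarrow s_0$. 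Away from $N$ the product cellular decomposition of Lemma~\ref{lem:nobifurcations} applies unchanged. Inside $N$, the normal form gives an explicit description of all AL-paths meeting $N$; after compactifying by letting $\sigma(s)$ collapse at $s=s_0$ and gluing to the exterior moduli space, one obtains $\bacalM_{K_2}(\widetilde{\xi}_J)$, restricting to $\bacalM_{K_2}(\xi_{s_0\pm\ve})$ at the endpoints. This yields a propagator $Q^\circ(\widetilde{\xi}_J)$ and the $1$-chain
\[ Z_n(\widetilde{\xi}_J,\xi_2,\ldots,\xi_{3n})=\sum_\Gamma\Tr_\Gamma\langle Q^\circ(\widetilde{\xi}_J),Q^\circ(\xi_2)\times J,\ldots,Q^\circ(\xi_{3n})\times J\rangle_\Gamma. \]

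The hard part will be showing that the boundary of this $1$-chain at the interior parameter $s=s_0$ vanishes. The principal- and hidden-face contributions are handled exactly as in Lemma~\ref{lem:nobifurcations}, via the IHX relation and a Kontsevich-type vanishing argument. The genuinely new boundary strata are those AL-graphs in which a vertical segment runs along one of the newborn critical loci: as $s\downarrow s_0$ this vertical segment collapses together with $\sigma(s)$. Using the local normal form, each degenerating configuration involving the index-$k$ newborn locus should pair canonically with one obtained by replacing its vertical segment by a vertical segment on the index-$(k+1)$ newborn locus, joined through the cancelling flow line $\sigma(s)$. The sign convention of \S\ref{ss:coori}, combined with the factor $(-1)^{\ind\gamma}$ of Theorem~\ref{thm:propagator}(2) applied to the change of index parity, makes the two paired contributions opposite in sign and therefore they cancel; the concordance condition, which forces $\Lambda$ to cover $S^1$ evenly, is what ensures the pairing is global and that the $t$-holonomies $t^{p(\gamma)}$ attached to the two paired AL-graphs agree, so no unmatched residue survives in $\widehat{\Lambda}$. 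This is the same mechanism by which the Lefschetz zeta function in Theorem~\ref{thm:propagator}(3) is insensitive to birth-death of canceling pairs, now applied one level up to AL-graphs. It yields $0=[\partial Z_n(\widetilde{\xi}_J,\xi_2,\ldots,\xi_{3n})]=Z_n(\xi_{s_0-\ve},\xi_2,\ldots,\xi_{3n})-Z_n(\xi_{s_0+\ve},\xi_2,\ldots,\xi_{3n})$, which with the constancy of $Z_n^\anomaly$ gives the stated equality of $\wZ_n$.
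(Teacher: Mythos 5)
The high-level strategy you propose—build a $1$-chain over $J$ from a parametrized moduli space and argue that interior boundary contributions at $s=s_0$ cancel—is the same as the paper's, and your reduction via the normal form, the constancy of $Z_n^\anomaly$, and the reuse of the IHX/Kontsevich-lemma vanishings from Lemma~\ref{lem:nobifurcations} are all consistent with the paper. However, your key cancellation mechanism is not correct, and this is the crux of the lemma.

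You pair an AL-graph whose vertical segment runs along the index-$k$ newborn locus with one whose vertical segment runs along the index-$(k+1)$ newborn locus, ``joined through the cancelling flow line,'' and attribute the sign flip to the factor $(-1)^{\ind\gamma}$. This cannot work as stated. For a \emph{generic} AL-graph (Lemma~\ref{lem:transversality}), every univalent vertex of a horizontal component lies on an index-$1$ critical locus, so every vertical segment runs along an index-$1$ locus. If the newborn pair has indices $(1,2)$, only the index-$1$ locus $\alpha$ is visited by generic AL-graphs; your proposed partner, with a vertical segment on the index-$2$ locus $\beta$, is never a generic configuration and does not appear in the count. Moreover, the factor $(-1)^{\ind\gamma}$ in Theorem~\ref{thm:propagator}(2) is attached to \emph{closed} AL-paths in $\partial Q(\xi)$; it is not the sign that governs horizontal tree components of AL-graphs, whose signs come instead from the $1/1$-intersection coorientations of \S\ref{ss:coori}. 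The paper's actual cancellation is geometric and entirely confined to the index-$1$ newborn locus: one first perturbs $\widetilde{\xi}$ so that $\wcalD_\alpha(\xi_{s_0+\ve})\cup\wcalD_\beta(\xi_{s_0+\ve})$ is a $1$-parameter family of \emph{thin half-disks}; an AL-graph $I$ whose horizontal segment $L$ leaves $\alpha$ along one side of the half-disk is then matched to a unique nearby generic AL-graph $I'$ leaving $\alpha$ along the \emph{other} side, and the two transversal intersections of the half-disk with the relevant ascending manifold ($\wcalA_\delta$ or $\wcalA_{\alpha'}$) carry opposite orientations. Both $I$ and $I'$ visit $\alpha$; neither visits $\beta$.

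Two further points are missing or imprecise. First, you do not treat the case where the broken edge at $s_0$ has \emph{positive length}: the paper handles this separately by perturbing $\widetilde{\xi}$ away from the birth locus so that such edges become disjoint from $\gamma$ entirely, which requires the thin-half-disk control you did not set up. Second, your appeal to the concordance condition to ``make the $t$-holonomies agree'' is not the role it plays here; it enters (via Lemma~\ref{lem:arrange-bd}) to ensure the birth locus is a finite cover of $S^1\times\{s_0\}$, which is needed so the cellular decomposition and the parametrized moduli space $\bacalM_{K_2}(\widetilde{\xi}_J)$ are globally defined, not to match holonomies of two differently-indexed AL-graphs.
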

\begin{proof}
By the symmetry between a birth and a death bifurcation with respect to the parameter $s$, it is enough to check the lemma only for a birth bifurcation. Let $\gamma\subset M\times [0,1]$ be the birth locus that occurs at $s_0$ and suppose that a pair $(\alpha,\beta)$ of critical loci of $\wf$ of index 1 and 2 appear after $s_0$. See Figure~\ref{fig:bd-cancel} (a). The case where $\mathrm{ind}\,\alpha=1$ and $\mathrm{ind}\,\beta=0$ is symmetric to this case. Moreover, we assume for simplicity that the image of $\gamma$ in $S^1\times[0,1]$ spans $H_1(S^1\times[0,1];\Z)=\Z$ since the proof of other cases are the same. Take a small neighborhood $U_0$ of $S^1\times \{s_0\}$ in $S^1\times[0,1]$, a section $\widetilde{\gamma}:U_0\to (\kappa\times\mathrm{id})^{-1}(U_0)$ that is a smooth extension of $\gamma$, and a fiberwise small tubular neighborhood $U$ of $\widetilde{\gamma}$ in $M\times [0,1]$. Choose a level surface locus $T$ of $\wf$ in $U$ that is disjoint from $\alpha\cup\beta\cup\gamma$ in $U$ and that lies just below $(\alpha\cup\beta\cup\gamma)\cap U$. 
\begin{figure}
\fig{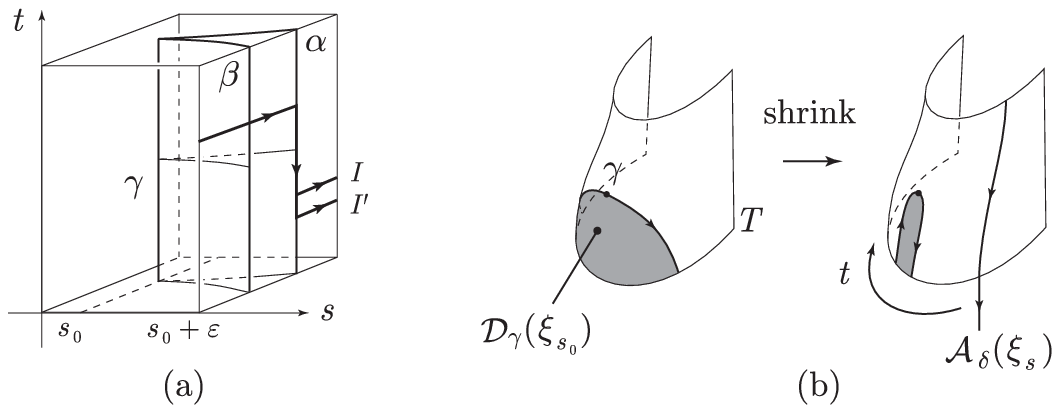}
\caption{}\label{fig:bd-cancel}
\end{figure}

By the Normal form lemma for $A_2$-singularities in \cite{Ig1}, we may assume that for each point $(s_0,u)\in U$ on $\gamma$, there is a local coordinate $(s,t,x_1,x_2)$ around $(s_0,u)$ such that $\wf$ agrees with
\[ \wf(s,t,x_1,x_2)=g(s,t)+x_1^3-(s-s_0)x_1-x_2^2 \]
for a smooth function $g(s,t)$, where it makes sense. Moreover, we may assume for a fixed pair $(s_0,u)$ that the Euclidean local coordinate is isometric near $(s_0,u)$. 

The intersection of the descending manifold locus $\wcalD_\gamma(\widetilde{\xi})$ of $\gamma$ with $T$ forms a bundle over $S^1\times\{s_0\}$ with fiber a small closed interval. By perturbing $\widetilde{\xi}$ in a small neighborhood of $T$, the closed interval can be made arbitrarily small in each fiber. Then by smoothness of $\widetilde{\xi}$, the intersection of $\wcalD_\alpha(\widetilde{\xi})\cup \wcalD_\beta(\widetilde{\xi})$ in each fiber of $U$ with $T$ can be made arbitrarily small too. Then for a small number $\ve>0$, $\wcalD_\alpha(\xi_{s_0+\ve})\cup \wcalD_\beta(\xi_{s_0+\ve})$ forms a 1-parameter family of thin half-disks which flow down to a critical locus of index 0 for a generic parameter, or which may go through another critical locus of index 1 as $t$ increases. See Figure~\ref{fig:bd-cancel} (b).


First, we consider the limit of AL-graphs as $s\to s_0$ from below. If $\widetilde{\xi}$ is generic, the moduli spaces $\bcalM_{\Gamma(\vec{k})}^\mathrm{AL}(\Sigma;\xi_s,\xi_2,\ldots,\xi_{3n})$ for $s\in [s_0-\ve,s_0)$ forms a finite covering over $[s_0-\ve,s_0)$. As seen from the result in \cite[\S{8.4}]{Wa1}, the limit consists of AL-graphs that do not intersect $\gamma$ and those with a broken edge that intersect $\gamma$. The AL-graphs at $s=s_0$ that do not intersect $\gamma$ extends smoothly over $(s_0,s_0+\ve]$ as a covering over $[s_0-\ve,s_0+\ve]$. The AL-graphs that intersect $\gamma$ stop at $s=s_0$ as boundaries of a 1-dimensional moduli space and do not extend over $(s_0,s_0+\ve]$. We shall check that $\widetilde{\xi}$ can be perturbed within the space of 1-parameter families of gradient-like vector fields for the given a 2-parameter family $\wf$ of GMF's so that all the boundaries of the moduli spaces at $s=s_0$ disappear. 

Long broken edge: suppose that the limit of a 1-parameter family $\{I_s\}_{s\in [s_0-\ve,s_0)}$ of AL-graphs at $s_0$ has an edge $e$ with positive length that is broken at the birth locus $\gamma$. Let $e'$ be the horizontal segment in $e$ that is broken at $\gamma$. In this case, by perturbing $\widetilde{\xi}$ in small neighborhoods of two points on $e'$ that are not close to $\gamma$, we may assume that $e$ is disjoint from $\gamma$. This is possible since we are assuming that the descending manifold locus $\wcalD_\gamma(\widetilde{\xi})$ forms a bundle of arbitrarily thin half-disks. 
\[ \fig{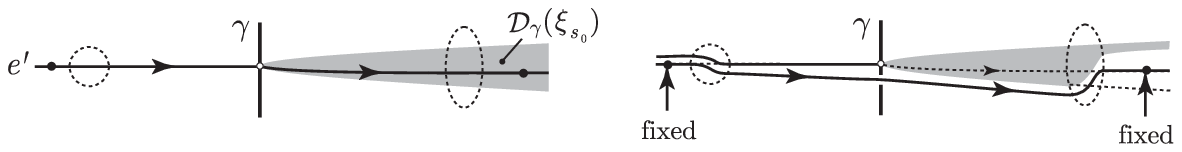} \]

Collapsed broken edge: suppose that the limit of a 1-parameter family $\{I_s\}_{s\in [s_0-\ve,s_0)}$ of AL-graphs at $s_0$ has an edge that collapses to a point on the birth locus $\gamma$. This is the contribution of $\partial\bConf_{K_2}(M)$. The limit can be described as follows. Let $J=[s_0-\ve,s_0+\ve]$. We may define the moduli space $\bacalM_{K_2}(\widetilde{\xi}_J)$ of AL-paths by gluing the closures in the auxiliary spaces ($\times J$) as in the definition of $\bacalM_{K_2}(\xi)$ in \cite[\S{2}]{Wa2}. The evaluation map $\bar{b}:\bacalM_{K_2}(\widetilde{\xi}_J)\to M^{K_2}\times J$ represents a 5-dimensional $\widehat{\Lambda}$-chain $Q(\widetilde{\xi}_J)$. The boundary of $Q(\widetilde{\xi}_J)$ on $\partial \bConf_{K_2}(M)\times J$ is described as follows. Let $(M\times J)_0=(M\times J)\setminus\bigcup_{\sigma\,:\,\mathrm{critical\, locus}}\sigma$ and let $s_{\widetilde{\xi}_J}:(M\times J)_0\to ST(M)\times J$ be the normalization $-\widetilde{\xi}_J/\|\widetilde{\xi}_J\|$ of the section $-\widetilde{\xi}_J$. The closure $\overline{s_{{\widetilde{\xi}_J}}((M\times J)_0)}$ in $ST(M)\times J$ is a smooth manifold with boundary whose boundary at $\partial\bConf_{K_2}(M)$ is the disjoint union of circle bundles over the critical loci $\sigma$ of $\widetilde{\xi}_J$ (including the birth-death locus $\gamma$), for a similar reason as \cite[Lemma~4.3]{Sh}. The fibers of the circle bundles are equators of the fibers of $ST(\sigma)$. Let $E^-_\sigma$ be the total space of the 2-disk bundle over $\sigma$ whose fibers are the lower hemispheres of the fibers of $ST(\sigma)$ which lie below the tangent spaces of the level surfaces of $\kappa$. Then $\partial\overline{s_{\widetilde{\xi}_J}((M\times J)_0)}=\bigcup_\sigma \partial E_\sigma^-$ as sets. Let
\[ s_{\widetilde{\xi}_J}^*(M\times J)=\overline{s_{\widetilde{\xi}_J}((M\times J)_0)}\cup \bigcup_\sigma E^-_\sigma\subset ST(M)\times J.\]
This is a 4-dimensional piecewise smooth manifold. We orient $s_{\widetilde{\xi}_J}^*(M\times J)$ by extending the natural orientation $(s_{\widetilde{\xi}_J}^{-1})^*o(M\times J)=(s_{\widetilde{\xi}_J}^{-1})^*o(J)\wedge o(M)$ on $s_{\widetilde{\xi}_J}((M\times J)_0)$. The contribution of $\partial\bConf_{K_2}(M)$ in the boundary of $Q(\widetilde{\xi}_J)$ is of the following form:
\[ -s_{\widetilde{\xi}_J}^*(M\times J)-\sum_{\sigma}(-1)^{\mathrm{ind}\,\sigma}\ve(\sigma)\,t^{p(\sigma)}\,ST(\sigma^\irr). \]
See \cite[\S{5.4}]{Wa1} for the reason of the signs in this formula. We could describe the sign $\ve(\sigma)$ and the orientations of $ST(\sigma^\irr)$ precisely, but it is not necessary here. The point is that one can define $Z_n^{\mathrm{anomaly}}(\xi_{s_0},\xi_2,\ldots,\xi_{3n},\mathfrak{s})$ using the restriction of $Q(\widetilde{\xi}_J)$ at $s=s_0$ and that there is no bifurcation for the anomaly correction term $Z_n^\anomaly$ around $s=s_0$ if $\widetilde{\xi}_J$ is generic, by a dimensional reason. This completes the proof of vanishing of the contributions of the AL-graphs with an edge degenerate at the birth locus.

Next, we consider the limit of AL-graphs as $s\to s_0$ from above. In this side, there may be other kind of AL-graphs with an edge broken at the birth point, namely, those with an edge that visits the critical locus $\alpha$ (of index 1). Let $I\in \acalM_{\Gamma(\vec{k})}(\Sigma;\xi_{s_0+\ve},\xi_2,\ldots,\xi_{3n})$ be a generic AL-graph whose first edge $e_1$ has a vertical segment $C$ included in the critical locus $\alpha_{s_0+\ve}=\alpha\cap (M\times\{s_0+\ve\})$. There must be a horizontal segment $L$ in $e_1$ next to $C$. Let $H_0$ be a horizontal component in $I$ that contains $L$. 

When $H_0$ has a trivalent vertex, let $H_0'$ be the graph obtained from $H_0$ by removing the edge $L$. Then the moduli space of AL-graphs $H_0'\to M\times J$ near the inclusion $H_0'\to H_0$ is 1-dimensional and the locus of the bivalent vertex of $H_0'$ forms a local section $\delta:U\times\{s_0+\ve\}\to M_U\times\{s_0+\ve\}$ of $\kappa_1\times\mathrm{id}:M\times\{s_0+\ve\}\to S^1\times\{s_0+\ve\}$, where $U$ is a small open interval in $S^1$, $\kappa_1$ is the first of $(\kappa_1,\kappa_2,\ldots,\kappa_{3n})$ (\S\ref{ss:def_Z}) and $M_U=\kappa_1^{-1}(U)$. We define the descending manifold $\wcalD_\delta^0(\xi_{s_0+\ve})$ and the ascending manifold $\wcalA_\delta^0(\xi_{s_0+\ve})$ of $\delta$ as follows.
\[ \begin{split}
  \wcalD_\delta^0(\xi_{s_0+\ve})&=\{(x,s_0+\ve)\in M_U\times\{s_0+\ve\};\exists\,T\geq 0,\Phi_{-\xi_{s_0+\ve}}^T(\delta(s))=x\},\\
  \wcalA_\delta^0(\xi_{s_0+\ve})&=\{(x,s_0+\ve)\in M_U\times\{s_0+\ve\};\exists\,T\leq 0,\Phi_{-\xi_{s_0+\ve}}^T(\delta(s))=x\}.
\end{split}\]
Then the segment $L$ corresponds to a transversal intersection of $\wcalD_\alpha(\xi_{s_0+\ve})$ with $\wcalA_\delta(\xi_{s_0+\ve})$. The observation in the previous paragraph implies that $L$ goes along one side of a thin half-disk in $\wcalD_\alpha(\xi_{s_0+\ve})\cup \wcalD_\beta(\xi_{s_0+\ve})$. By transversality of $\wcalD_\alpha(\xi_{s_0+\ve})$ and $\wcalA_\delta(\xi_{s_0+\ve})$, the other side of the half-disk intersects $\wcalA_\delta(\xi_{s_0+\ve})$ with the opposite orientation in a fiber close to $L$. This implies that there is a unique AL-graph $I'\in \acalM_{\Gamma(\vec{k})}(\Sigma;\xi_{s_0+\ve},\xi_2,\ldots,\xi_{3n})$ that is close to $I$ and whose first edge is nearly parallel to that of $I$. Moreover, the signs of $I$ and $I'$ are opposite. Hence, no terms of the AL-graphs whose first edge visits $\alpha$ survive in the sum $Z_n(\xi_{s_0+\ve},\xi_2,\ldots,\xi_{3n})$. The equality for other graphs can be proved similarly as Lemma~\ref{lem:case(a)}.

When $H_0$ does not have a trivalent vertex, then $H_0$ is a single flow-line between a pair of critical loci, say, from $\alpha$ to $\alpha'$. In this case, replace $\wcalA_\delta(\xi_{s_0+\ve})$ with $\wcalA_{\alpha'}(\xi_{s_0+\ve})$ in the previous paragraph so that similar cancellation occurs.
\end{proof}

\begin{proof}[Proof of Theorem~\ref{thm:main}]
The theorem follows as a corollary of Lemmas~\ref{lem:generic-ho}, \ref{lem:nobifurcations}, \ref{lem:case(a)}, \ref{lem:case(b)}, \ref{lem:case(d)}, \ref{lem:case(e)}, \ref{lem:case(f)} and \ref{lem:bd}. 
\end{proof}

\mysection{Surgery formulas of $\wZ_n$ and $\calQ$}{s:sformula}

\subsection{Torelli surgery with trivial action of AL-paths}\label{ss:AL}

Let $I=[\alpha,\beta]\subset S^1$ be a small interval which does not contain any integers. Let $\Sigma'=\kappa^{-1}(\beta)$, $\Sigma''=\kappa^{-1}(\alpha)$ and $M_I=\kappa^{-1}(I)$. For an oriented fiberwise Morse function $f:M\to \R$ and its fiberwise gradient $\xi$, let $\omega$ be the restriction of $\xi$ on $M_I$. Suppose that there are no $1/1$-intersections in $M_I$ for $\omega$. For simplicity, we assume Assumption~\ref{assum:mu0} for a spin structure $\mathfrak{s}$ on $M$. Choose the canonical stable framing $\phi$ as in Lemma~\ref{lem:cframing} that is compatible with $\mathfrak{s}$. Let $B'\subset \Sigma'$ be a small disk that is disjoint from critical loci of $\xi$ and put $\Sigma_\circ'=\Sigma'\setminus \mathrm{Int}\,B'$. The negative gradient of $\kappa$ induces a diffeomorphism $g_0:\Sigma'\to \Sigma''$. The orbit of $B'$ under the flow of $-\mathrm{grad}\,\kappa$ forms a cylinder $N\cong D^2\times I$ in $M_I$. Put $B''=\Sigma''\cap N$ and $\Sigma_\circ''=\Sigma''\setminus \mathrm{Int}\,B''$. 

Now we shall define a surgery of $(M,\xi)$ along $M_I$. Take an orientation preserving diffeomorphism $g:\Sigma'\to \Sigma''$ such that $g|_{B'}=g_0|_{B'}$, an oriented fiberwise Morse function $H:\Sigma'\times I\to \R$ and its fiberwise gradient $\Omega$. Let $\bar{g}:\Sigma'\times[\alpha,\alpha+\eta]\to \kappa^{-1}[\alpha,\alpha+\eta]$ ($\eta>0$ small) be a local trivialization of $\kappa$ that extends $g$, let $\bar{h}:\Sigma'\times [\beta-\eta,\beta]\to \kappa^{-1}[\beta-\eta,\beta]$ be a local trivialization of $\kappa$ that extends $\mathrm{id}$, and let $\bar{\nu}:B'\times I\to N$ be the local trivialization of the disk bundle $\kappa|_{N}$ such that $\bar{\nu}(z,t)$ is the intersection of the gradient line of $-\mathrm{grad}\,\kappa$ from $z\in B'$ with $\kappa^{-1}(t)$ and such that $\bar{\nu}|_{B'\times [\alpha,\alpha+\eta]}=\bar{g}|_{B'\times [\alpha,\alpha+\eta]}$ and $\bar{\nu}|_{B'\times [\beta-\eta,\beta]}=\bar{h}|_{B'\times [\beta-\eta,\beta]}$. Let $P^+(\Sigma',f)=\{p_{1,\Sigma'}^+,p_{2,\Sigma'}^+,\ldots,p_{r,\Sigma'}^+\}$ be the set of critical points of $f|_{\Sigma'}:\Sigma'\to \R$ and let $P^-(\Sigma'',f)=\{p_{1,\Sigma''}^-,p_{2,\Sigma''}^-,\ldots,p_{r,\Sigma''}^-\}$ be the set of critical points of $f|_{\Sigma''}:\Sigma''\to \R$ such that $p_{i,\Sigma'}^+$ and $p_{i,\Sigma''}^-$ are the endpoints of a critical locus of $f|_{M_I}$ for each $i$. Let $q_{i,\Sigma'}^-=g^{-1}(p_{i,\Sigma''}^-)\in \Sigma'\times\{\alpha\}$. 
\begin{Def}\label{def:adapted}
We say that $(\Sigma'\times I,B',g,H,\Omega)$ is {\it adapted to $(\kappa,f,\xi)$} if the following conditions (1)--(4) are satisfied.
\begin{enumerate}
\item On a collar neighborhood of $\Sigma'_\circ\times\{\alpha\}$ in $\Sigma'_\circ\times I$, $H$ and $\Omega$ agree with $\bar{g}^*f$ and $(d\bar{g}^{-1})\, \omega$ respectively.
\item On a collar neighborhood of $\Sigma'_\circ\times\{\beta\}$ in $\Sigma'_\circ\times I$, $H$ and $\Omega$ agree with $\bar{h}^*f$ and $(d\bar{h}^{-1})\,\omega$ respectively.
\item On $B'\times I$, $H$ and $\Omega$ agree with $\bar{\nu}^*f$ and $(d\bar{\nu}^{-1})\,\omega$ respectively.
\item The number of AL-paths of $\Omega$ from $p_{i,\Sigma'}^+$ to $q_{j,\Sigma'}^-$ counted with signs is the Kronecker delta $\delta_{ij}$.
\end{enumerate}
\end{Def}
The condition (4) is an analogue of pure braid with zero linking numbers. According to \cite[Lemma~{4.10}]{Wa2}, the action of AL-paths agrees with the action of cobordism on homology. This together with the condition (4) implies that the diffeomorphism $g^{-1}\circ g_0:\Sigma'\to \Sigma'$ represents an element of the Torelli group.
\begin{Def}
For $\tau=(\Sigma'\times I,B',g,H,\Omega)$ that is adapted to $(\kappa,f,\xi)$, we define the {\it adapted surgery} $M_\tau$ as the 3-manifold obtained from $M$ by removing $\mathrm{Int}(M_I\setminus N)$ and pasting $\Sigma'_\circ\times I$ back by the diffeomorphism 
\[ \mathrm{id}\cup \bar{\nu}\cup g:\Sigma'_\circ\times\{\beta\}\cup (\partial B'\times I)\cup  \Sigma'_\circ \times\{\alpha\}\to \partial \overline{M_I\setminus N}. \]
\end{Def}

By the conditions (1), (2) and (3) for the adaptedness, $f$ and $\xi$ on $M\setminus \mathrm{Int}\,(M_I\setminus N)$ extends smoothly over $M_\tau$ by $H$ and $\Omega$ on $\Sigma'_\circ\times I$. We denote by $f_\tau$ and $\xi_\tau$ the resulting function and vector field on $M_\tau$ respectively. Let $\kappa_\tau:M_\tau\to S^1$ be the projection obtained by gluing $\kappa|_{M\setminus\mathrm{Int}\,(M_I\setminus N)}$ and $\mathrm{proj}:\Sigma'_\circ\times I\to I$. We say that $(\kappa_\tau,f_\tau,\xi_\tau)$ is obtained from $(\kappa,f,\xi)$ by adapted surgery with respect to $\tau$. 

\begin{Exa}[Borromean surgery, $C_2$-move or $Y$-surgery (\cite{Mat})]\label{exa:borromean}
Suppose that $\Sigma'\subset M$ is of genus 3 and the restriction of $f$ on $\Sigma'$ is a minimal Morse function, i.e., the numbers of critical points of index $0,1,2$ are $1,6,1$ respectively. Suppose that there is a level curve $L=(f|_{\Sigma'})^{-1}(h)$ of $f|_{\Sigma'}$ such that 
\begin{itemize}
\item three of the 6 critical points of $f|_{\Sigma'}$ of index 1 lie below $L$ and the rest of those of index 1 lie above $L$ and
\item $B=(f|_{\Sigma'})^{-1}(-\infty,h]$ has four boundary components, i.e., a disk with three holes.
\end{itemize} 
Then a Borromean 3-strand braid $[\sigma_1^2,\sigma_2^2]=(\sigma_1\sigma_2^{-1})^3$ gives rise to a relative diffeomorphism of $B$ relative to $\partial B$, which can be realized by a sequence of 1-handle-slides below $L$. One has a 1-parameter family of Morse functions on $B$ that induces the handle-slides, where the handle structures for the endpoints of the path may be assumed to be equal. Extending such 1-parameter family by trivial family above $L$, one obtains a 1-parameter family of Morse functions on $\Sigma'$ that gives rise to an adapted 5-tuple $(\Sigma'\times I,B',g,H,\Omega)$. It is known that such a surgery may yield topologically different 3-manifold. \qed
\end{Exa}

Suppose that a triple $(\kappa_1,f_1,\xi_1)$ is obtained from $(\kappa,f,\xi)$ by a small perturbation. Then we perturb $(\mathrm{proj}:\Sigma'_\circ\times I\to \R,H,\Omega)$ as follows. Put 
\[ V=\Sigma'_\circ\times([\alpha,\alpha+\eta]\tcoprod [\beta-\eta,\beta])\cup (B'\times I).\]
We consider the pullback $(\bar{h}\cup \bar{\nu}\cup \bar{g})^*\kappa_1:V\to \R$ where $\bar{h},\bar{\nu},\bar{g}$ are the maps fixed above by using $(\kappa,f,\xi)$. Then we deform $\mathrm{proj}:\Sigma'_\circ\times I\to I\subset \R$ slightly to $\Pi_1:\Sigma'_\circ\times I\to \R$ so that $\Pi_1$ agrees with $(\bar{h}\cup \bar{\nu}\cup \bar{g})^*\kappa_1$ on $V$. Similarly, we deform $H$ and $\Omega$ slightly to $H_1:\Sigma'_\circ\times I\to \R$ and $\Omega_1$ so that they agree with the pullbacks $(\bar{h}\cup\bar{\nu}\cup \bar{g})^*f_1$ and $(d\bar{h}^{-1}\cup d\bar{\nu}^{-1}\cup d\bar{g}^{-1})\xi_1$ on $V$. After surgery by $\tau$ with $(\Pi_1,H_1,\Omega_1)$, we get a triple $(\kappa_{1\tau},f_{1\tau},\xi_{1\tau})$ on $M_\tau$. We say that such a triple $(\kappa_{1\tau},f_{1\tau},\xi_{1\tau})$ is obtained by an {\it adapted perturbation} from $(\kappa_\tau,f_\tau,\xi_\tau)$ with respect to $(\kappa_1,f_1,\xi_1)$.

\subsection{Spin structure and surgery}\label{ss:***}

Given a spin structure $\mathfrak{s}$ on $M$, one can choose a spin structure on $M_\tau$ as follows. As in \cite{Mas}, we consider a spin structure on an $n$-manifold $X$ as an element $\mathfrak{s}\in H^1(P;\Z_2)$, where $P\to X$ is the orthonormal frame bundle of $TX$, such that the restriction of $\mathfrak{s}$ to each fiber is non trivial in $H^1(SO(n);\Z_2)$. For a manifold $X$, let $\Spin(X)$ denote the set of spin structures on $X$. 

\begin{Lem}[\cite{Mas}]
Let $X$ be a closed oriented manifold obtained from two compact oriented spinnable manifolds $X_1$ and $X_2$ with connected boundaries $S_1,S_2$ respectively, by identifying the boundaries by an orientation reversing diffeomorphism $\varphi:S_2\to S_1$. Suppose that the set
\[ J=\{(\mathfrak{s}_1,\mathfrak{s}_2)\in\Spin(X_1)\times \Spin(X_2);\,\varphi^*(-\mathfrak{s}_1|_{S_1})=\mathfrak{s}_2|_{S_2}\} \]
is not empty. Then $X$ is spinnable and the restriction map
\[ r:\Spin(X)\to \Spin(X_1)\times \Spin(X_2) \]
is injective. (In such case, for each $(\mathfrak{s}_1,\mathfrak{s}_2)\in J$, there is a unique spin structure $r^{-1}(\mathfrak{s}_1,\mathfrak{s}_2)$ on $X$ that restricts to the given pair.)
\end{Lem}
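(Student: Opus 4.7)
The plan is to prove the lemma in two steps: first construct a spin structure on $X$ restricting to any prescribed compatible pair in $J$, then deduce injectivity of $r$ from a standard Mayer--Vietoris argument.

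For existence, fix $(\mathfrak{s}_1,\mathfrak{s}_2)\in J$ and let $U_i\subset X$ be an open collar neighborhood of $X_i$ deformation retracting onto $X_i$, so that $U_1\cap U_2$ deformation retracts onto the glued boundary $S$. Write $P\to X$ for the orthonormal $SO(n)$-frame bundle of $TX$ and $P_i\to X_i$, $P_S\to S$ for those of $TX_i$ and $TX|_S$ respectively. Under the retractions, $P|_{U_i}\simeq P_i$ and $P|_{U_1\cap U_2}\simeq P_S$. The identification of $P_S$ with the boundary restriction of $P_i$ uses the outward normal of $S_i$ in $X_i$; since these two outward normals point in opposite directions in $X$, the two identifications differ by the normal-reversing involution on $P_S$, which acts on $H^1(P_S;\Z_2)$ precisely as the involution $\mathfrak{s}\mapsto -\mathfrak{s}$ appearing in the definition of $J$. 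Hence the compatibility $\varphi^*(-\mathfrak{s}_1|_{S_1})=\mathfrak{s}_2|_{S_2}$ says exactly that $\mathfrak{s}_1$ and $\mathfrak{s}_2$ restrict to the same class in $H^1(P|_{U_1\cap U_2};\Z_2)$. The Mayer--Vietoris sequence
\[
H^1(P;\Z_2)\longrightarrow H^1(P|_{U_1};\Z_2)\oplus H^1(P|_{U_2};\Z_2)\longrightarrow H^1(P|_{U_1\cap U_2};\Z_2)
\]
then produces a class $\mathfrak{s}\in H^1(P;\Z_2)$ with $\mathfrak{s}|_{P_i}=\mathfrak{s}_i$. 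Because each $\mathfrak{s}_i$ is non-trivial on every $SO(n)$-fiber of $P_i$, and the fibers of $P$ above points of $X_i$ coincide with those of $P_i$, the class $\mathfrak{s}$ is fiberwise non-trivial, i.e.\ is a spin structure on $X$. This shows $X$ is spinnable and that $r$ hits $(\mathfrak{s}_1,\mathfrak{s}_2)$.

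For injectivity, recall that $\Spin(X)$ is a torsor over $H^1(X;\Z_2)$, with the action corresponding under the fiberwise non-triviality condition to translation by the image of $H^1(X;\Z_2)\to H^1(P;\Z_2)$. If $\mathfrak{t},\mathfrak{t}'\in\Spin(X)$ satisfy $r(\mathfrak{t})=r(\mathfrak{t}')$, their difference $\delta\in H^1(X;\Z_2)$ restricts to zero in both $H^1(X_1;\Z_2)$ and $H^1(X_2;\Z_2)$. The Mayer--Vietoris sequence of $X=X_1\cup X_2$,
\[
H^0(X_1;\Z_2)\oplus H^0(X_2;\Z_2)\xrightarrow{(a,b)\mapsto a|_S-b|_S} H^0(S;\Z_2)\xrightarrow{\partial} H^1(X;\Z_2)\xrightarrow{\rho} H^1(X_1;\Z_2)\oplus H^1(X_2;\Z_2),
\]
combined with connectedness of $X_1$, $X_2$ and $S$, shows that the first arrow is the surjection $\Z_2\oplus\Z_2\to\Z_2$, $(a,b)\mapsto a-b$. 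Hence $\partial=0$ and $\rho$ is injective, so $\delta=0$ and $\mathfrak{t}=\mathfrak{t}'$.

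The main obstacle, and the step that actually encodes the geometry rather than just bookkeeping, is the identification of the sign $-$ appearing in the definition of $J$ with the intrinsic twist coming from the reversal of the outward normal of $S$ inside $X$. Once this is verified (as done in \cite{Mas}), both existence and injectivity reduce to routine Mayer--Vietoris computations in $\Z_2$-cohomology.
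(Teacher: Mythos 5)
The paper cites this lemma from Massuyeau \cite{Mas} and offers no proof of its own, so there is no in-paper argument to compare with; what follows is an assessment of your reconstruction on its merits. Your Mayer--Vietoris route is the natural and correct one: existence of $r^{-1}(\mathfrak{s}_1,\mathfrak{s}_2)$ from the Mayer--Vietoris sequence of the frame bundle $P\to X$ with respect to the cover $\{P|_{U_1},P|_{U_2}\}$, and injectivity from the Mayer--Vietoris sequence of $X=X_1\cup_S X_2$ in $\Z_2$-cohomology together with connectedness of $X_1$, $X_2$ and $S$, using that $\Spin(X)$ is a torsor over $H^1(X;\Z_2)$. The injectivity argument is complete and correct as written.

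One point in the existence argument deserves a little more care than you give it. You write $P_S$ for the $SO(n)$-frame bundle of $TX|_S$ and then speak of ``the identification of $P_S$ with the boundary restriction of $P_i$'' and of ``the normal-reversing involution on $P_S$.'' But the classes $\mathfrak{s}_i|_{S_i}$ entering the definition of $J$ live in $H^1(P_{TS_i};\Z_2)$, the $SO(n-1)$-frame bundle of the boundary, not in $H^1(P_{TX_i}|_{S_i};\Z_2)$. What one actually needs is the comparison, via the outward-normal-first stabilization $P_{TS_i}\hookrightarrow P_{TX_i}|_{S_i}$, between the image of $\mathfrak{s}_1$ in $H^1(P|_{U_1\cap U_2};\Z_2)$ and that of $\mathfrak{s}_2$; the two stabilization maps (from the $X_1$ side and the $X_2$ side) use opposite normals, and the resulting discrepancy is precisely Massuyeau's involution $\mathfrak{s}\mapsto -\mathfrak{s}$. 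You are right that this is the single geometrically substantive step, and deferring its verification to \cite{Mas} is reasonable; but note that the involution lives most naturally as a statement about the two stabilization maps, not as an automorphism of the rank-$n$ frame bundle $P_S$, since $\mathrm{diag}(-1,1,\dots,1)$ does not act on the oriented frame bundle. With that small correction of where the sign actually arises, the argument is sound.
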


For example, if $\tau=(\Sigma'\times I,B',g,H,\Omega)$ is adapted to $(\kappa,f,\xi)$, then in particular $g^{-1}\circ g_0$ induces the identity on $H_1(\Sigma';\Z)$, as guaranteed by the condition (4) for the adaptedness, and $g^{-1}\circ g_0$ acts trivially on the spin structure on $\Sigma'$. Hence the restriction of $\mathfrak{s}$ to $M\setminus \mathrm{Int}(M_I\setminus N)$ can be extended to a spin structure $\mathfrak{s}_\tau$ on $M_\tau$. We fix one such for each $\tau$. Note that by \cite[Lemma~5]{Mas}, one has $(\mathfrak{s}_\tau)_{\tau'}=(\mathfrak{s}_{\tau'})_\tau$ for disjoint surgeries $\tau, \tau'$.

\subsection{Alternating sum of surgeries and filtrations}\label{ss:alternatingsum}

Let $I_1,I_2,\ldots, I_m\subset S^1$ be a disjoint collection of small intervals that are disjoint from $0\in S^1$. Let $\tau_1,\tau_2,\ldots,\tau_m$ be a sequence of 5-tuples on $M_{I_1},M_{I_2},\ldots,M_{I_m}$ as in Definition~\ref{def:adapted} that are adapted to $(\kappa,f,\xi)$. Suppose that there are no $1/1$-intersections in $M_{I_i}$ for $\xi$ for all $i$. Let $T=\{\tau_1,\tau_2,\ldots,\tau_m\}$ and put
\[ [M,T]
=\sum_{S\subset T}(-1)^{|S|}(M_S,[\kappa_S],[f_S]) \]
where $O_S$ denotes $(\cdots ((O_{\tau_1})_{\tau_2})\cdots)_{\tau_m}$ for each object $O$. We define
\[ \begin{split}
  Z_n([M,T])
&=\sum_{S\subset T}(-1)^{|S|}Z_n(M_S,[\kappa_S],[f_S]),\\
  \wZ_n([M,T],\mathfrak{s}_T)
&=\sum_{S\subset T}(-1)^{|S|}\wZ_n(M_S,\mathfrak{s}_S,[\kappa_S],[f_S]).
\end{split} \]
Let $\wcalF_n(M,\kappa)$ be the vector space over $\Q$ spanned by alternating sums $[M,T]$ with $|T|=n$ where $T$ is as in \S\ref{ss:alternatingsum}. Then the sequence $\{\wcalF_n(M,\kappa)\}_{n\geq 0}$ forms a descending filtration, i.e., $\wcalF_n(M,\kappa)\supset \wcalF_{n+1}(M,\kappa)$. Let $\beta[M,T]$ denote the alternating sum of $(M_S,[\kappa_S])$ that is obtained from $[M,T]$ by forgetting the classes of fiberwise Morse functions. Let $\calF_n(M,\kappa)$ be the vector space over $\Q$ spanned by alternating sums $\beta[M,T]$ with $|T|=n$ where $T$ is as in \S\ref{ss:alternatingsum}. Again, $\{\calF_n(M,\kappa)\}_{n\geq 0}$ forms a descending filtration. We consider the following problem.
\begin{Prob} For each $n\geq 1$, determine the structure of the quotient space
\begin{enumerate}
\item $\wcalF_n(M,\kappa)/\wcalF_{n+1}(M,\kappa)$
\item $\calF_n(M,\kappa)/\calF_{n+1}(M,\kappa)$
\end{enumerate}
\end{Prob}
Although we do not have a solution to the problem, the invariants $\wZ_n$ and $\calQ$ are helpful for understanding of the quotient spaces. We do not know whether the filtrations are the right ones that correspond to the space $\calA_n(\Lambda)$, but we think that there should be a close connection of the filtrations to $\calA_n(\Lambda)$. 


\subsection{Surgery formula for adapted surgery}\label{ss:***}

Let $\kappa_i:M\to S^1$, $f_i:M\to \R$, $\xi_i$ ($i=1,2,\ldots,3n$) be as in \S\ref{ss:moduli_hori}. Suppose that for each $i$ the triple $(\kappa_i,f_i,\xi_i)$ is close to $(\kappa,f,\xi)$ with respect to the $C^\infty$-topology so close that they are obtained from $(\kappa,f,\xi)$ by a small isotopy in $M$. Let $\tau_1,\ldots,\tau_m$ be as above that are adapted to $(\kappa,f,\xi)$ and let $T=\{\tau_1,\tau_2,\ldots,\tau_m\}$. Let $(\kappa_{i\tau_j},f_{i\tau_j},\xi_{i\tau_j})$, $i=1,2,\ldots,3n$, $j=1,2,\ldots,m$, be triples obtained by adapted perturbations from $(\kappa_{\tau_j},f_{\tau_j},\xi_{\tau_j})$ with respect to $(\kappa_i,f_i,\xi_i)$. Moreover, replacing $\tau_j$ with $S\subset T$, we obtain triples $(\kappa_{iS},f_{iS},\xi_{iS})$. 

Put $I_j=[\alpha_j\beta_j]$, $\Sigma_j'=\kappa_1^{-1}(\beta_j)$ and $\Sigma_j''=\kappa_1^{-1}(\alpha_j)$. Let $c_\ell$ be the critical locus of $f$ that intersects $\Sigma'_j$ and $\Sigma''_j$ at $p_{\ell,\Sigma'_j}^+$ and $p_{\ell,\Sigma''_j}^-$ respectively. The isotopy which takes $(\kappa,f,\xi)$ to $(\kappa_i,f_i,\xi_i)$ also takes critical loci $c_\ell$ of $f$ to those of $f_i$. Let $c_\ell'$ be the critical locus of $f_i$ corresponding to $c_\ell$ by the isotopy. Then let $P^+(\Sigma_j',f_i)=\Sigma_j'\cap \{c_1',c_2',\ldots,c_r'\}$ and $P^-(\Sigma_j'',f_i)=\Sigma_j''\cap \{c_1',c_2',\ldots,c_r'\}$. 

For a fixed number $i\in\{1,2,\ldots,m\}$ and for a triple $j,k,\ell\in \{1,2,\ldots,3n\}$, take critical points $x_j\in P^+(\Sigma_i',f_j)\cup P^-(\Sigma_i'',f_j)$, $x_k\in P^+(\Sigma_i',f_k)\cup P^-(\Sigma_i'',f_k)$, $x_\ell\in P^+(\Sigma_i',f_\ell)\cup P^-(\Sigma_i'',f_\ell)$ of index 1 in $M$. Then we define the {\it $Y$-graph} $Y(x_j,x_k,x_\ell)$ as the $Y$-shaped unitrivalent graph such that the three univalent vertices are labeled $x_j,x_k,x_\ell$ respectively. The order of $(x_j,x_k,x_\ell)$ determines a vertex-orientation of the trivalent vertex of $Y(x_j,x_k,x_\ell)$. We impose the relation $Y(\sigma(a),\sigma(b),\sigma(c))=\mathrm{sgn}(\sigma)\,Y(a,b,c)$ for $\sigma\in\mathfrak{S}_3$. Let $\acalM_{Y(x_j,x_k,x_\ell)}(\xi_{j\tau_i},\xi_{k\tau_i},\xi_{\ell\tau_i})$ be the moduli space of AL-graphs in $\Sigma_{i\circ}'\times I_i\subset M_{\tau_i}$ for $Y(x_j,x_k,x_\ell)$. The coorientation of the moduli space is determined as the wedge product of the coorientations of the loci of the descending and the ascending manifolds of critical loci considered at the trivalent vertex. We take the wedge product of the coorientations using the vertex-orientation. Put
\[ 
  Z_{n,Y}^T(i)=\sum_{j,k,\ell}\,\asum{x_j,x_k,x_\ell}{\mathrm{ind}=1}\#\acalM_{Y(x_j,x_k,x_\ell)}(\xi_{j\tau_i},\xi_{k\tau_i},\xi_{\ell\tau_i}) \, Y(x_j,x_k,x_\ell).
\]
Note that the product $\#\acalM_{Y(x_j,x_k,x_\ell)}(\xi_{j\tau_i},\xi_{k\tau_i},\xi_{\ell\tau_i}) \, Y(x_j,x_k,x_\ell)$ does not depend on the choice of vertex-orientation. We define $Z_{n,Y}^T(i)_0$ by the same formula as $Z_{n,Y}^T(i)$ but restricting the sum $\sum_{x_j,x_k,x_\ell}$ to triples $(x_j,x_k,x_\ell)$ consisting only of points of $P^+$ or only of points of $P^-$.

For a pair $i,j\in\{1,2,\ldots,m\}$ and a number $k\in \{1,2,\ldots,3n\}$, take critical points $x\in P^+(\Sigma_i',f_k)\cup P^-(\Sigma_i'',f_k)$ and $y\in P^+(\Sigma_j',f_k)\cup P^-(\Sigma_j'',f_k)$ in $\Sigma_{i\circ}'\times I_i$ and $\Sigma_{j\circ}'\times I_j$ respectively of index 1. Suppose that $x,y$ are such that if $x\in P^+$ then $y\in P^{-}$ and if $x\in P^-$ then $y\in P^+$. Then we define a {\it chord} $C(x,y)$ as the connected univalent graph ${}_x\kern-1mm\curvearrowright_y$ with vertices labeled by $x$ and $y$ respectively. Let $\acalM_{C(x,y)}(\xi_{k\tau_i},\xi_{k\tau_j})$ be the moduli space of AL-paths in $M_{\{\tau_i,\tau_j\}}$ going from $x$ to $y$. This gives a 0-dimensional chain in $\bConf_{K_2}(M)$, whose twisted homology class is represented by a rational function in $\widehat{\Lambda}$. Put
\[ Z_C^T(k)=\sum_{i,j}\sum_{x,y}\#\acalM_{C(x,y)}(\xi_{k\tau_i},\xi_{k\tau_j})\,C(x,y), \]
where $\#\acalM_{C(x,y)}(\xi_{k\tau_i},\xi_{k\tau_j})\in\widehat{\Lambda}$.

We define a product of several $Y$-graphs by the disjoint union and extend it to the $\Q$-linear combinations of products of $Y$-graphs by $\Q$-multilinearity. We also define a product of several $\widehat{\Lambda}$-colored chords by the disjoint union and extend it by $\Q$-multilinearity. Then we define a non-symmetric pairing $\langle A,B\rangle\in\calA_n(\widehat{\Lambda})$, where $A$ is a product of $2n$ $Y$-graphs and $B$ is a product of $3n$ $\widehat{\Lambda}$-colored chords, as follows. We set $\langle A,B\rangle=0$ unless the labels of the $6n$ legs in $A$ and $B$ match, i.e., unless the sets of the labels of the $6n$ univalent vertices in $A$ and $B$ agree. If the labels of the $6n$ legs in $A$ and $B$ match, we set $\langle A,B\rangle$ to be the $\widehat{\Lambda}$-colored graph obtained by joining all pairs of legs which have the same label. We extend the pairing to $\Q$-linear combinations by $\Q$-linearity. For example,
\[ \left\langle \fig{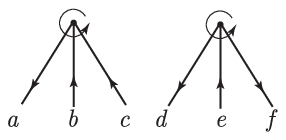}\,\,\,,\,\,\,\, \fig{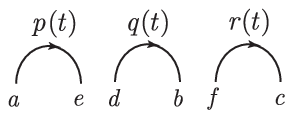}\right\rangle
=\left[\fig{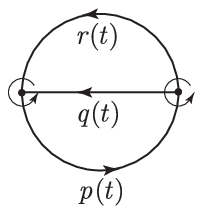}\right].\]

\begin{Thm}\label{thm:surgery}
Let $T$ be as in \S\ref{ss:alternatingsum} and let $m=|T|$. 
\begin{enumerate}
\item If $m>2n$, then 
\[ \wZ_n([M,T],\mathfrak{s}_T)=0. \]

\item If $m=2n$, then 
\[ \wZ_n([M,T],\mathfrak{s}_T)
=(2n)!\,\Bigl\langle \prod_{i=1}^{2n}Z_{n,Y}^T(i),\prod_{k=1}^{3n}Z_C^T(k)\Bigr\rangle
\in\calA_n(\widehat{\Lambda}). \]
\end{enumerate}
\end{Thm}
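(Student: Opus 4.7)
The plan is to translate $\wZ_n([M,T],\mathfrak{s}_T)$ into a sum of signed counts of AL-graphs via Proposition~\ref{prop:F}, and then to show that only those AL-graphs placing a trivalent vertex inside every surgery region $M_{I_i}$ survive the alternating sum $\sum_{S\subset T}(-1)^{|S|}$. Since the surgery regions $M_{I_1},\ldots,M_{I_m}$ are disjoint, one may arrange the spin bordism $W$ and the framing extension $\vec{\rho}_W$ compatibly across all $S\subset T$, so the anomaly contribution $Z_n^\anomaly$ is common to every term and cancels in the alternating sum. It then suffices to analyse $\sum_S (-1)^{|S|}\sum_\Gamma \Tr_\Gamma F_\Gamma(\Sigma;\xi_{1S},\ldots,\xi_{3nS})$.

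For each AL-graph $I:\Gamma\to M_S$ of generic type, I would decompose $\Gamma$ into the pieces $\Gamma_i=I^{-1}(M_{I_i})$ and the complementary piece $\Gamma_{\mathrm{out}}$. On $\Gamma_{\mathrm{out}}$ the data coincides with the $S$-independent triples $(\kappa_k,f_k,\xi_k)$. The central step is the cancellation claim: if for some $\tau_i\in T$ the subgraph $\Gamma_i$ contains no trivalent vertex of $\Gamma$, then the weighted contribution of such configurations is identical for $S$ and for $S\cup\{\tau_i\}$. This follows from the adaptedness condition~(4) of Definition~\ref{def:adapted}: when $\Gamma_i$ has no trivalent vertex, it is a disjoint union of AL-path arcs running through $M_{I_i}$, and the signed count of such arcs between any fixed pair of critical points on $\Sigma_i'\cup\Sigma_i''$, weighted by $t$-exponents from intersections with $\Sigma$, is governed by a transfer matrix $A=(N(x_i,x_j))$ as in Lemma~\ref{lem:ST(V)}. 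Condition~(4) asserts that $A$ equals its unsurgered value, so the terms paired by the symmetric difference with $\{\tau_i\}$ cancel telescopically.

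For part (1), $\Gamma$ has only $2n$ trivalent vertices while $m>2n$; by the pigeonhole principle some $M_{I_i}$ must be vertex-free, so every AL-graph cancels and $\wZ_n([M,T],\mathfrak{s}_T)=0$. For part (2) with $m=2n$, the surviving AL-graphs place exactly one trivalent vertex inside each $M_{I_i}$, forcing $\Gamma_i$ to be a $Y$-shape whose three leaves sit at index-$1$ critical points on $\Sigma_i'\cup\Sigma_i''$ (index $0$ and $2$ configurations vanish generically by dimension). The signed count of these internal $Y$-pieces, with the vertex-orientation and coorientation conventions of \S\ref{ss:coori}, is $Z_{n,Y}^T(i)$; the complementary piece $\Gamma_{\mathrm{out}}$ is a disjoint union of $3n$ AL-path chords connecting leaves across distinct surgery regions, with weighted count $\prod_k Z_C^T(k)$. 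Joining leaves of matching labels realises the pairing $\langle\cdot,\cdot\rangle$, and after applying $\Tr_\Gamma$ produces the element of $\calA_n(\widehat{\Lambda})$ asserted on the right-hand side. The combinatorial factor $(2n)!$ records the number of ways to assign the $2n$ labelled trivalent vertices of $\Gamma$ to the $2n$ surgery regions.

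The main obstacle I expect is the rigorous proof of the cancellation claim in the presence of closed AL-paths that enter (or iterate through) a single region $M_{I_i}$. Their contribution is a generating series of rational functions in the $t_k$ variables rather than of polynomials, so the comparison between $M_S$ and $M_{S\cup\{\tau_i\}}$ must be carried out at the level of rational functions obtained from the geometric series $\sum_n (tA)^n=(1-tA)^{-1}$ of Lemma~\ref{lem:ST(V)}. The key input will be that the adaptedness condition forces $A$ to take its unsurgered value, ensuring the rational functions agree; compatibility of the spin structures $\mathfrak{s}_S$ across surgeries follows from the fact that $g^{-1}\circ g_0$ lies in the Torelli group, which also secures a canonical choice of $\mathfrak{s}_T$ in the sense of the construction preceding the theorem.
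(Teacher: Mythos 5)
Your proposal follows essentially the same route as the paper's proof outline: pass to signed AL-graph counts, pair each surviving $S$ with a neighbour differing by a single $\tau_i$ when the corresponding region $V_i$ contains no trivalent vertex (using condition (4) of Definition~\ref{def:adapted} to match the signed counts), deduce (1) from the pigeonhole since $m>2n$, and for $m=2n$ recognize the surviving configurations as one $Y$-graph per surgery region joined by $3n$ chords, producing $\prod_i Z_{n,Y}^T(i)$ paired with $\prod_k Z_C^T(k)$ and the labelling factor $(2n)!$. The observation that the transfer-matrix argument (via Lemma~\ref{lem:ST(V)}) is what controls the rational coefficients coming from closed AL-paths is consistent with the paper's reliance on condition (4).

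The one place where your argument is genuinely loose is the anomaly cancellation. You assert that because the surgery regions are disjoint one may choose the spin bordism and framing extension ``compatibly across all $S$, so the anomaly contribution $Z_n^\anomaly$ is common to every term and cancels.'' But the anomaly term for $M_S$ is not literally the same for different $S$: the spin bordism $W_S$ and the sections $\vec\rho_{W_S}$ depend on $S$. What the paper does is fix one spin cobordism $W$ for $M$ and, for each $\tau_j$, a local spin cobordism $W_j$ bounding $(-V_j)\cup\overline{M_{I_j}\setminus N_j}$ with $\mathrm{sign}\,W_j\equiv 0$ (mod 16), so that the anomaly for $M_S$ additively decomposes as $-\sum_{\tau_j\in S}Z_n^\anomaly(\vec\rho_{W_j})-Z_n^\anomaly(\vec\rho_W)$. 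The alternating sum then vanishes because each $Z_n^\anomaly(\vec\rho_{W_j})$ acquires total coefficient $-(1-1)^{m-1}=0$ for $m\geq 2$, and $Z_n^\anomaly(\vec\rho_W)$ acquires $-(1-1)^m=0$. So the conclusion is correct, but the mechanism is this local--global additive decomposition followed by the binomial identity, not a literal sameness of the anomaly across all $S$; as stated, your step does not establish the cancellation. Also be careful to phrase the pairing as toggling $\tau_i$ in $S$ (symmetric difference, matching cases (a) and (b) in the paper) rather than only as adjoining $\tau_i$, since otherwise the bijection between summands is ill-defined when $\tau_i$ already lies in $S$.
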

Theorem~\ref{thm:surgery} shows that the values of $\wZ_n$ for $[M,T]$ with $|T|\geq 2n$ does not depend on the choice of spin structures $\mathfrak{s}_S$ for $S\subset T$, hence $\wZ_n$ induces a well-defined linear map
\[ \wZ_n:\wcalF_n(M,\kappa)/\wcalF_{n+1}(M,\kappa)\to \calA_n(\widehat{\Lambda}). \]
\begin{proof}[Proof of Theorem~\ref{thm:surgery}]
We shall only give an outline of the proof since similar argument has been given previously by several authors (e.g., \cite{AF,KT}) for different purposes. We assume that for each $i$ the triple $(\kappa_i,f_i,\xi_j)$ is close to $(\kappa,f,\xi)$ as above. For each $S\subset T$ and for each $i$ such that $\tau_i\in S$, we put $V_i=\Sigma_{i\circ}'\times I_i\subset M_S$. If $m\geq 2n$, an AL-graph $G$ in $M_S$, $S\subset T$, having a trivalent vertex in the complement of $\bigcup_{i:\tau_i\in S}V_i$ does not survive in the alternating sum $Z_n([M,T])$ since in such a case either of the following occurs. 
\begin{enumerate}
\item[(a)] There is $j$ such that no trivalent vertex of $G$ belongs to $V_j\subset M_S$.
\item[(b)] There is a unique graph $G'$ in $M_{S\cup\{\tau_j\}}$ that represents the same monomial $\Lambda$-colored graph as $G$, that agrees with $G$ outside $V_j=\Sigma_{j\circ}'\times I_j\subset M_{S\cup\{\tau_j\}}$ and that has no trivalent vertex in $V_j$.
\end{enumerate}
In the case (a), $G$ collapses to an AL-graph in $M_{S\setminus\{\tau_j\}}$ that represents the same monomial $\Lambda$-colored graph as $G$. By the condition (4) of Definition~\ref{def:adapted}, the counts of both graphs are equal. Such a pair cancels each other out in $Z_n(M_S,[\kappa_S],[f_S])-Z_n(M_{S\setminus\{\tau_j\}},[\kappa_S],[f_{S\setminus\{\tau_j\}}])$. In the case (b), similar cancellation occurs in \\$Z_n(M_{S\cup\{\tau_j\}},[\kappa_S],[f_{S\cup\{\tau_j\}}])-Z_n(M_S,[\kappa_S],[f_S])$. 

In the case $m>2n$, either of (a) or (b) always occurs and hence the claim (1) holds. In the case $m=2n$, the only surviving terms in the alternating sum $Z_n([M,T])$ are those for AL-graphs $G$ in $M_{T}$ whose set of $2n$ trivalent vertices intersects $V_j$ for every $j\in\{1,2,\ldots,2n\}$. This means that for each $j\in\{1,2,\ldots,2n\}$, there is a horizontal tree component of $G$ with one trivalent vertex, i.e., a $Y$-graph, in $V_j$. The sum of counts of such $G$ gives the right hand side of the formula of (2). The reason for the coefficient $(2n)!$ is that for each AL-graph $G$, the same contribution is counted for $(2n)!$ different labelings for the trivalent vertices of $G$. 

Finally, we should check that the alternating sum of the correction terms vanishes, if $m>1$. For simplicity, we assume Assumption~\ref{assum:mu0} for $(M,\mathfrak{s})$ and $(M_{\tau_i},\mathfrak{s}_{\tau_i})$ for all $i$. If $(M_{\tau_i},\mathfrak{s}_{\tau_i})$ does not satisfy Assumption~\ref{assum:mu0}, we have only to replace $\tau_i$ with the 16 iterations of $\tau_i$ when defining the correction term and to divide the result by 16. By Assumption~\ref{assum:mu0} for $(M,\mathfrak{s})$ and $(M_{\tau_i},\mathfrak{s}_{\tau_i})$, there exist spin cobordisms $W_i$ with $\mathrm{sign}\,W_i=0$ (mod 16) that spin bound $(-V_i)\cup \overline{M_{I_i}\setminus N_i}$. Then one can find sequences $\vec{\rho}_{W_i}$ of sections of $T^vW_i$ extending given ones on the boundaries, and the correction term in $\wZ_n(M_S,\mathfrak{s}_S,[\kappa_S],[f_S])$ is 
\[ -\sum_{\tau_j\in S}Z_n^\anomaly(\vec{\rho}_{W_j})-Z_n^\anomaly(\vec{\rho}_W). \]
The total alternating sum is as follows.
\[ \begin{split}
  &\sum_{S\subset T}(-1)^{|S|}\Bigl[-\sum_{\tau_j\in S}Z_n^\anomaly(\vec{\rho}_{W_j})-Z_n^\anomaly(\vec{\rho}_W)\Bigr]\\
 =&\sum_{j=1}^m -\Bigl(\#\{S;\,\tau_j\in S,\, |S|\mbox{ is even}\}-\#\{S;\,\tau_j\in S,\, |S|\mbox{ is odd}\}\Bigr)\,Z_n^\anomaly(\vec{\rho}_{W_j})\\
 =&\sum_{j=1}^m -(1-1)^{m-1}\,Z_n^\anomaly(\vec{\rho}_{W_j})=0.
\end{split}\]
\end{proof}

We have another surgery formula analogous to Theorem~\ref{thm:surgery} for $\calQ$. The following theorem is a special case of Lescop's Lagrangian preserving surgery formula of $\calQ$ in \cite{Les2}.
\begin{Thm}\label{thm:surgery2}
Let $T$ be as in \S\ref{ss:alternatingsum} and let $m=|T|$. 
\begin{enumerate}
\item If $m>2$, then 
\[ \calQ(\beta[M,T])=0. \]

\item If $m=2$, then 
\[ \calQ(\beta[M,T])
=2!\,\bigl\langle Z_{1,Y}^T(1)_0Z_{1,Y}^T(2)_0,Z_C^T(1)Z_C^T(2)Z_C^T(3)\bigr\rangle
\in\calA_1(\widehat{\Lambda})/O_\delta. \]
\end{enumerate}
\end{Thm}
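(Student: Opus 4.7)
The plan is to mirror the proof of Theorem~\ref{thm:surgery}, working term by term on the explicit formula for $\calQ$ given in Theorem~\ref{thm:Q}, and to verify that the additional geometric correction terms (those involving $U_{\xi_i}$ and $ST(\widehat{V}_{\xi_i})$) contribute nothing new to the alternating sum modulo $O_\delta$.

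First I would analyze the main combinatorial piece $\Tr_\Theta\langle Q(\xi_1),Q(\xi_2),Q(\xi_3)\rangle_\Theta$, which counts AL-graphs of theta type. The same cancellation argument as in the proof of Theorem~\ref{thm:surgery} applies verbatim: an AL-theta-graph in $M_S$ survives the alternating sum $\sum_{S\subset T}(-1)^{|S|}$ only if every surgery region $V_i$ contains at least one trivalent vertex, since otherwise one can pair it with an AL-graph in an adjacent $M_{S'}$ using condition (4) of Definition~\ref{def:adapted}. Because $\Theta$ has only two trivalent vertices, this forces $|T|\leq 2$, giving the vanishing in case (1) for this term. When $|T|=2$, each $V_i$ contains exactly one trivalent vertex, producing a $Y$-graph there, with the three edges of $\Theta$ realized as three AL-paths running between $V_1$ and $V_2$. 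The restriction to $Z_{1,Y}^T(i)_0$ (all three leg-labels of each $Y$ lying in $P^+$ or all in $P^-$) arises from the descending property of AL-paths: the three edges of $\Theta$ joining the two trivalent vertices must traverse each $V_i$ in a coherent direction with respect to $\kappa$, so the three legs of each $Y$-graph exit $V_i$ entirely through the upper face $\Sigma_i'$ or entirely through the lower face $\Sigma_i''$. Pairing with the chord sums $Z_C^T(k)$ that enumerate the connecting AL-paths yields the claimed expression, with the $2!$ counting the two ways to label the two trivalent vertices of the labeled graph $\Theta$.

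Second I would dispose of the correction terms, which by Proposition~\ref{prop:formula-linking} are expressible as linking numbers and bordism-intersection pairings constructed from closed AL-paths, critical loci, and the auxiliary sections $\xi_i$. By condition (4) of Definition~\ref{def:adapted}, each surgery $\tau_i$ acts trivially on $H_*(M)$, on the Alexander polynomial, and on the Lefschetz zeta function; moreover, the transfer matrix of AL-paths through each $V_i$ is the identity. Combined with the fact, inherent in the proof of Theorem~\ref{thm:Q} and in Lescop's analysis in \cite{Les2}, that the class of $\calQ$ in $\calA_1(\widehat{\Lambda})/O_\delta$ is independent of the auxiliary choices of $U_\xi$ and $\widehat{V}_\xi$, each correction term is invariant under any single adapted surgery $\tau_i$ modulo $O_\delta$. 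Hence its alternating sum over $S\subset T$ vanishes modulo $O_\delta$ whenever $|T|\geq 1$, disposing of all corrections in both (1) and (2). The framing anomaly term is handled exactly as in the proof of Theorem~\ref{thm:surgery}, with $\sum_{j=1}^m(1-1)^{m-1}Z_n^\anomaly(\vec{\rho}_{W_j})=0$ yielding its vanishing for $m>1$.

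The main obstacle will be the second step. The correction terms are genuinely non-local: $\langle Q(\xi_1),U_{\xi_2},U_{\xi_3}\rangle_\Theta$ involves linking numbers of closed AL-paths with parallels determined by $\xi_2,\xi_3$, and $\langle ST(\widehat{V}_{\xi_1}),U_{\xi_2},U_{\xi_3}\rangle_\Theta$ pairs the global bordism $\widehat{V}_{\xi_1}$ with the intersection curve $C(\xi_2,\xi_3)$, neither of which is locally supported in a single $V_i$. Verifying that the variation of these quantities under a Torelli-type adapted surgery lands in $O_\delta$ requires careful use of the hypothesis $H_1(M;\Z)=\Z$, of Lescop's compatibility results for her equivariant propagator, and of the strong identity-on-homology condition built into the definition of adapted surgery.
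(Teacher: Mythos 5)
Your handling of the main term $\Tr_\Theta\langle Q(\xi_1),Q(\xi_2),Q(\xi_3)\rangle_\Theta$ is correct and matches the paper: the $\Theta$-graph has two trivalent vertices, so the alternating-sum cancellation from the proof of Theorem~\ref{thm:surgery} forces $m\leq 2$ for a surviving term, and in the case $m=2$ each $V_i$ absorbs exactly one trivalent vertex, giving a $Y$-graph there and three connecting AL-paths, with the $2!$ accounting for the labeling of the two vertices of $\Theta$. Your observation about why the restriction to $Z_{1,Y}^T(i)_0$ appears is a reasonable explanation consistent with the descending property.

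The gap is in your second step. You assert that each individual correction term in the formula of Theorem~\ref{thm:Q} is invariant under a single adapted surgery modulo $O_\delta$, deducing this from (i) the homological triviality of $\tau_i$ and (ii) the fact that the \emph{total} $\calQ$ mod $O_\delta$ is independent of the auxiliary choices of $U_\xi$ and $\widehat{V}_\xi$. But (ii) is a statement about independence of auxiliary data for a \emph{fixed} manifold, and it does not transfer to invariance of any individual summand when the underlying manifold changes to $M_\tau$. In fact the individual correction terms (linking numbers of $C(\xi_2,\xi_3)$, pairings of $\widehat{V}_{\xi_1}$ with $C(\xi_2,\xi_3)$, etc.) are honest geometric quantities that generally \emph{do} change under surgery; if they did not, $\calQ$ itself would be rigid in a way it is not. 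You flag this as ``the main obstacle'' at the end, which is honest, but the proposed remedy is circular. The paper's actual reasoning is a locality/support argument, of the same flavor as the anomaly cancellation in the proof of Theorem~\ref{thm:surgery} that you correctly invoke in your last sentence: every term in the Theorem~\ref{thm:Q} formula other than $\langle Q,Q,Q\rangle_\Theta$ is a contribution from $\partial\bConf_{K_2}(M)$, i.e., a transversal $0$-dimensional intersection supported at finitely many points on the diagonal of $M\times M$. Each such point lies in at most one surgery region $V_j$ (the $V_j$ being disjoint), and the adapted perturbations agree outside $\bigcup V_j$, so the difference $C(M_{S\cup\{\tau_j\}})-C(M_S)$ is independent of which other $\tau_{j'}$ have been applied; hence $\sum_{S\subset T}(-1)^{|S|}C(M_S)=-\sum_j D_j(1-1)^{m-1}=0$ for $m\geq 2$. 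This is what the paper means by ``no contribution of $\partial\bConf_{K_2}(M)$ survive in the alternating sum.''
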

The proof of Theorem~\ref{thm:surgery2} is almost parallel to Theorem~\ref{thm:surgery}. We remark that for $m\geq 2$, no contribution of $\partial\bConf_{K_2}(M)$ survive in the alternating sum. Hence so are the correction terms involving $U_{\xi_i}$. 

Theorem~\ref{thm:surgery2} (1) shows that $\calQ$ induces a well-defined linear map
\[ \calQ:\calF_2(M,\kappa)/\calF_3(M,\kappa)\to \calA_1(\widehat{\Lambda})/O_\delta \]
and (2) gives an explicit formula for this map. 

\mysection{AL-paths and the homology of $M$}{s:homology}

\subsection{A chain complex via AL-paths}

Let $a=0$, $b=\frac{1}{2}\in S^1=\R/\Z$. Let $f:M\to \R$ be an oriented fiberwise Morse function for the fibration $\kappa$ and let $\xi$ be its gradient along the fibers. Suppose that the $1/1$-intersections in $\xi$ are disjoint from both $\kappa^{-1}(a)$ and $\kappa^{-1}(b)$. Put $M^+=\kappa^{-1}[0,\frac{1}{2}]$ and $M^-=\kappa^{-1}[\frac{1}{2},1]$. For $c\in S^1$, put $\Sigma_c=\kappa^{-1}(c)$, $f_c=f|_{\Sigma_c}$ and $\xi_c=\xi|_{\Sigma_c}$ and let $\Sigma(f_c)$ denote the set of critical points of $f_c$. Let $\Sigma_i(f_c)$ denote the set of critical points of $f_c$ of index $i$. 

Now we define a chain complex $C_*^\mathrm{AL}(\xi)$ using AL-paths. For $p\in \Sigma(f_a)$, let $\mathsf{D}_p^-(\xi)$ be the subset of $\bacalM_{K_2}(\xi)$ consisting of AL-paths from $p$ to a point of $M^-$ that does not intersect $\mathrm{Int}\,M^+=\kappa^{-1}(0,\frac{1}{2})$. As paths in $M^+$, we consider a {\it reverse} AL-path, which is defined by using $-\kappa$ instead of $\kappa$ in the definition of AL-path. Let $\bcalM_{K_2}^{\mathrm{rAL}}(\xi)$ be the (closed) moduli space of reverse AL-paths in $M$. For $p\in \Sigma(f_a)$, let $\mathsf{D}_p^+(\xi)$ be the subset of $\bcalM_{K_2}^{\mathrm{rAL}}(\xi)$ consisting of reverse AL-paths from $p$ to a point of $M^+$ that does not intersect $\mathrm{Int}\,M^-=\kappa^{-1}(\frac{1}{2},1)$. We orient $\mathsf{D}_p^+(\xi)$ and $\mathsf{D}_p^-(\xi)$ by using the orientations of descending and ascending manifold loci and the orientations of level surfaces determined by $-\mathrm{grad}\,\kappa$, as in \S\ref{ss:coori} or \cite[\S{3}]{Wa2}. Put
\[ \mathsf{D}_p(\xi)=\mathsf{D}_p^-(\xi)\cup \mathsf{D}_p^+(\xi). \]
The assignment of the terminal endpoint of a path defines a continuous map
\[ \bar{b}:\mathsf{D}_p(\xi)\to M. \]
For $q\in \Sigma(f_b)$, let $\calD_q(\xi_b)$ denote the descending manifold of $q$ for $-\xi_b$ and let $\bcalD_q(\xi_b)$ be its compactification to the space of possibly broken flow lines (see e.g., \cite{BH}). Let $\bar{\beta}:\bcalD_q(\xi_b)\to \Sigma_b$ denote the continuous map that extends the inclusion $\calD_q(\xi_b)\to \Sigma_b$. 

Roughly, the set of maps $\bar{b}$ and $\bar{\beta}$ gives a cell-like structure on $M$ whose degree $i$ part is generated by $\bar{b}(\mathsf{D}_p(\xi))$ for $p\in\Sigma_{i-1}(\xi_a)$ and $\bar{\beta}(\bcalD_q(\xi_b))$ for $q\in\Sigma_i(\xi_b)$. The incidence coefficients are determined as follows. For $p,q\in \Sigma(\xi_a)\cup \Sigma(\xi_b)$, let $\mathsf{M}_2(\xi;p,q)$ be the subspace of $\mathsf{D}_p(\xi)$ or $\bcalD_p(\xi_b)$ consisting of AL-paths (or reverse AL-paths) from $p$ to $q$. If $\mathrm{ind}\,p$ and $\mathrm{ind}\,q$ satisfies the conditions (1), (2), (3) below and if $\xi$ is generic, then $\mathsf{M}_2(\xi;p,q)$ is a compact 0-dimensional manifold. 
\begin{enumerate}
\item $p\in\Sigma(f_a)$, $q\in \Sigma(f_a)$ and $\mathrm{ind}\,p-\mathrm{ind}\,q=1$.
\item $p\in\Sigma(f_a)$, $q\in \Sigma(f_b)$ and $\mathrm{ind}\,p=\mathrm{ind}\,q$.
\item $p\in\Sigma(f_b)$, $q\in \Sigma(f_b)$ and $\mathrm{ind}\,p-\mathrm{ind}\,q=1$.
\end{enumerate}
Moreover, in such cases $\mathsf{M}_2(\xi;p,q)$ has a natural orientation (sign) as follows. If $p,q$ satisfies (1) or (3) above, then we orient $\mathsf{M}_2(\xi;p,q)$ as usual, namely, comparing $o^*_M(\calD_p(\xi_c))_x\wedge o^*_M(\calA_q(\xi_c))_x$ ($x\in \mathsf{M}_2(\xi;p,q)$, $c\in\{a,b\}$) with the orientation of the level curve $\ell_x\subset \Sigma_c$ of $f_c$ including $x$. Here, the orientation of $\ell_x$ is given by $o(\ell_x)_x=\iota(-\mathrm{grad}_xf_c)\,o(\Sigma_c)_x$. If $p,q$ satisfies (2) above, then we orient $\mathsf{M}_2(\xi;p,q)$ by the signs of AL-paths from $p$ to $q$. 

Now we put
\[ E_{0,j}=C_j(f_b),\quad E_{1,j}=C_j(f_a). \]
Let $\Phi_0(f_a):E_{1,j}\to E_{1,j-1}$, $\Phi_0(f_b):E_{0,j}\to E_{0,j-1}$ be the boundary operators of Morse complexes $C_*(f_a),C_*(f_b)$ respectively. Namely, 
\[ \begin{split}
  \Phi_0(f_a)(p) &=\sum_{q\in\Sigma_{j-1}(f_a)}\#\mathsf{M}_2(\xi;p,q)\, q \quad (p\in\Sigma_j(f_a))\\
  \Phi_0(f_b)(p') &=\sum_{q'\in\Sigma_{j-1}(f_b)}\#\mathsf{M}_2(\xi;p',q')\,q' \quad (p'\in\Sigma_j(f_b))\\
 \end{split}\]
Let $\Phi_1(f):E_{1,j}\to E_{0,j}$ be the operator defined by
\[ \Phi_1(f)(p) =\sum_{q\in\Sigma_j(f_b)}\#\mathsf{M}_2(\xi;p,q)\,q\quad (p\in \Sigma_j(f_a)).\]
Put $C_j^\mathrm{AL}(\xi)=E_{0,j}\oplus E_{1,j-1}$ and define $\partial^\mathrm{AL}:C_j^\mathrm{AL}(\xi)\to C_{j-1}^\mathrm{AL}(\xi)$ by
\[ \partial^\mathrm{AL} p = \left\{\begin{array}{ll}
-\Phi_0(f_a)(p) + \Phi_1(f)(p) & \mbox{if $p\in\Sigma(f_a)$}\\
\Phi_0(f_b)(p) & \mbox{if $p\in \Sigma(f_b)$}
\end{array}\right.\]
This definition is motivated by the twisted tensor product in \cite{Ig2}. 
\begin{Prop}\label{prop:AL-homology}
The pair $(C_*^\mathrm{AL}(\xi),\partial^\mathrm{AL})$ forms a chain complex and its homology is canonically isomorphic to $H_*(M;\Z)$.
\end{Prop}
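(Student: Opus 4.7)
The complex $(C^{\mathrm{AL}}_*(\xi), \partial^{\mathrm{AL}})$ is, by inspection of the formula for $\partial^{\mathrm{AL}}$, precisely the algebraic mapping cone of $\Phi_1(f)\colon C_*(f_a)\to C_*(f_b)$, with the standard convention $\partial(b,a)=(\partial b+\Phi_1(f)(a),-\partial a)$. My plan is therefore to (i) verify that it is a chain complex, (ii) extract the long exact sequence of this cone, (iii) match it with the Wang sequence of the mapping torus $\kappa\colon M\to S^1$, and (iv) promote the abstract isomorphism to a canonical one by producing a chain-level comparison map to $C^{\mathrm{sing}}_*(M)$.

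\textbf{The chain complex identity.} To verify $(\partial^{\mathrm{AL}})^2=0$, I expand and split the result into three identities. The two equations $\Phi_0(f_a)^2=0$ and $\Phi_0(f_b)^2=0$ are the standard Morse identities for $f_a$ on $\Sigma_a$ and $f_b$ on $\Sigma_b$. The remaining identity $\Phi_0(f_b)\circ\Phi_1(f)=\Phi_1(f)\circ\Phi_0(f_a)$ is the chain-map property of $\Phi_1(f)$ and follows from the usual broken-trajectory compactness argument: the compactified $1$-dimensional moduli $\mathsf{M}_2(\xi;p,q)$ for $p\in\Sigma_j(f_a)$ and $q\in\Sigma_{j-1}(f_b)$ has boundary consisting of broken AL-paths that split either at an intermediate critical point of $f_a$ or of $f_b$, yielding the two sides of the equation with opposite signs.

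\textbf{Homology computation.} The canonical filtration produces a short exact sequence $0\to C_*(f_b)\to C^{\mathrm{AL}}_*(\xi)\to C_{*-1}(f_a)\to 0$ and hence a long exact sequence
$$\cdots\to H_i(\Sigma_b)\to H_i(C^{\mathrm{AL}}_*(\xi))\to H_{i-1}(\Sigma_a)\xrightarrow{\Phi_1(f)_*}H_{i-1}(\Sigma_b)\to\cdots,$$
where I use the Morse identifications $H_*(C_*(f_c))\cong H_*(\Sigma_c)$ for $c=a,b$. The key geometric point is then to show that, after identifying $H_*(\Sigma_a)\cong H_*(\Sigma)\cong H_*(\Sigma_b)$ via the product cobordism $M^+$, the connecting map $\Phi_1(f)_*$ equals the Wang differential $\mathrm{id}-\varphi_*$, where $\varphi$ denotes the monodromy. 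For this I invoke \cite[Lemma~4.10]{Wa2}: the count of AL-paths in $M^-$ realizes the homology-level cobordism map through $M^-$, which under our identification is $\varphi_*$, while the count of reverse AL-paths in $M^+$ realizes the cobordism map through $M^+$, which is the identity. The sign convention in the definition of $\mathsf{M}_2(\xi;p,q)$ from \S\ref{ss:coori} makes these two contributions subtract, giving $\Phi_1(f)_*=\mathrm{id}-\varphi_*$. Comparison with the Wang sequence of $\kappa$ and the five lemma now yield $H_*(C^{\mathrm{AL}}_*(\xi))\cong H_*(M)$.

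\textbf{Naturality and main obstacle.} To secure the naturality of the isomorphism, I will construct an explicit quasi-isomorphism $C^{\mathrm{AL}}_*(\xi)\to C^{\mathrm{sing}}_*(M)$ by sending a generator $q\in\Sigma_j(f_b)$ to the descending Morse cell $\bar{\beta}(\bcalD_q(\xi_b))\subset\Sigma_b\subset M$, and a generator $p\in\Sigma_{j-1}(f_a)$ to the $j$-chain $\bar{b}(\mathsf{D}_p(\xi))$ swept out by the AL-paths and reverse AL-paths leaving $p$. Analyzing the codimension-one strata of $\mathsf{D}_p(\xi)$---AL-paths that have broken at an intermediate critical locus (contributing $\Phi_0(f_a)$), AL-paths terminating at a critical point of $f_b$ (contributing $\Phi_1(f)$), and AL-paths reaching $\Sigma_b$ generically---shows that this map intertwines $\partial^{\mathrm{AL}}$ with the singular boundary. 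Alternatively, one may identify $C^{\mathrm{AL}}_*(\xi)$ with Igusa's twisted tensor product \cite{Ig2} for the surface bundle $\kappa\colon M\to S^1$ equipped with the standard CW decomposition of $S^1$ (one $0$-cell at $b$, one $1$-cell from $a$), and quote Igusa's theorem directly. The main obstacle is the orientation bookkeeping in the identification of $\Phi_1(f)_*$ with $\mathrm{id}-\varphi_*$: the coorientation conventions of \S\ref{ss:coori} must be threaded consistently across both the forward AL-paths in $M^-$ and the reverse AL-paths in $M^+$, so that the two cobordism contributions carry opposite signs and thereby produce a \emph{difference} rather than a sum.
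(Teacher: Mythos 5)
Your proposal is correct, but it inverts the primary and fallback routes relative to the paper, which is worth noting. The paper's main line is exactly your \emph{alternative}: identify $(C_*^{\mathrm{AL}},\partial^{\mathrm{AL}})$ with Igusa's twisted complex for the $A_\infty$-functor $(\{C_*(f_a),C_*(f_b)\},\Phi_0,\Phi_1,0,0,\ldots)$ attached to the CW decomposition of $S^1$ with a $0$-cell at $b$ and a $1$-cell through $a$, and quote \cite{Ig2}. The paper relegates the direct topological computation to a parenthetical, phrased as ``Mayer--Vietoris plus \cite[Lemma~4.10]{Wa2}'' rather than Wang, but it is the same bookkeeping you propose: the mapping-cone long exact sequence matched against the long exact sequence coming from cutting $M$ along $\Sigma_a\sqcup\Sigma_b$, with \cite[Lemma~4.10]{Wa2} furnishing the identification of AL-path counts with cobordism maps on homology. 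The advantage of your cone-and-Wang route is that it makes the comparison with $H_*(M)$ concrete and isolates where the sign of $\Phi_1(f)_*$ matters; the advantage of the paper's route is that Igusa's theorem hands you the chain-level quasi-isomorphism and hence canonicity for free, whereas the five-lemma/ladder argument by itself does not, which is exactly the issue you flag in your last paragraph. Your treatment of $(\partial^{\mathrm{AL}})^2=0$ via broken-trajectory compactness of the $1$-dimensional $\mathsf{M}_2(\xi;p,q)$ agrees with the paper, though the paper pays closer attention to the orientation mechanism: it reduces to a trivial family over a $1$-simplex, uses the fact that the induced orientations on the two boundary fibers are opposite to explain why $\Phi_0(f_b)\Phi_1(f)$ and $\Phi_1(f)\Phi_0(f_a)$ appear with opposite signs in $\#\partial\mathsf{M}_2(\xi;p,q)=0$, and remarks that $1/1$-intersections only multiply in extra signs uniformly. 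You correctly identify the subtraction $\mathrm{id}-\varphi_*$ as the orientation-sensitive point on the homology side; if you carry out that route rather than citing Igusa, that is the step requiring the same care the paper exercises in its $(\partial^{\mathrm{AL}})^2=0$ argument.
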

\begin{proof}
Since $\Phi_0(f_a)\Phi_0(f_a)=0$ and $\Phi_0(f_b)\Phi_0(f_b)=0$, we have 
\[ \partial^{\mathrm{AL}}\partial^{\mathrm{AL}}=\Phi_0(f_b)\Phi_1(f)-\Phi_1(f)\Phi_0(f_a).\]
Thus the condition $\partial^{\mathrm{AL}}\partial^{\mathrm{AL}}=0$ is equivalent to the condition that $\Phi_1(f)$ is a chain map. It is enough to see that $\Phi_0(f_b)\Phi_1(f)-\Phi_1(f)\Phi_0(f_a)$ is given by the boundaries of the 1-dimensional moduli spaces $\mathsf{M}_2(\xi;p,q)$ for $p\in\Sigma_i(f_a)$, $q\in\Sigma_{i-1}(f_b)$. From the definition of the moduli space of AL-paths given in \cite{Wa2}, it follows that $\partial \mathsf{M}_2(\xi;p,q)$ consists of AL-paths between $p$ and $q$ of the forms $\gamma_1*\gamma_2$, where $\gamma_1$ goes from $p$ to some $r\in\Sigma_{i-1}(f_a)\cup\Sigma_i(f_b)$ and $\gamma_2$ goes from $r$ to $q$. This implies that 
\[ 0=\#\partial\mathsf{M}_2(\xi;p,q)=\sum_{r\in \Sigma_{i-1}(f_a)\cup\Sigma_i(f_b)}\pm \#\mathsf{M}_2(\xi;p,r)\times\#\mathsf{M}_2(\xi;r,q). \]
It suffices to check that this differs by a multiple of $\pm 1$ from the coefficient of $q$ in $(\Phi_0(f_b)\Phi_1(f)-\Phi_1(f)\Phi_0(f_a))(p)$. We assume for simplicity that the bundle $\kappa$ is trivial and that $f$ is a trivial 1-parameter family of a Morse function on $\Sigma$ since the mechanism of inducing boundary orientations for general case is the same except that the signs of AL-paths with $1/1$-intersections are multiplied. Moreover, we have only to check the claim for a trivial family over a geometric 1-simplex $|\Delta^1|=[0,1]$ (replacing $a,b$ with $a=0$, $b=1$ as elements of $[0,1]$ respectively). Since the orientations of $\Sigma_a$ and $\Sigma_b$ induced from that of $\Sigma\times |\Delta^1|$ are opposite, the orientation of $\mathsf{M}_2(\xi;p,q)$ induces opposite signs in the boundary operators on $\Sigma_a$ and $\Sigma_b$. Hence the coefficient of $q$ in 
\[ (\Phi_0(f_b)\Phi_1(f)-\Phi_1(f)\Phi_0(f_a))(p)=\partial^\mathrm{AL}\partial^\mathrm{AL}p \]
is $\pm \#\partial\mathsf{M}_2(\xi;p,q)=0$.

Since $\Phi_1(f)$ is a chain map, the system $(\{C_*(f_a),C_*(f_b)\},\Phi_0,\Phi_1,0,0,\ldots)$ satisfies the axiom of Igusa's $A_\infty$-functor (\cite{Ig2}). As mentioned above, $(C_*^\mathrm{AL},\partial^\mathrm{AL})$ agrees with Igusa's twisted complex for the $A_\infty$-functor, which gives the homology of $M$. (One may use Mayer--Vietoris argument and \cite[Lemma~4.10]{Wa2} to see this directly.)
\end{proof}

\begin{Cor}\label{cor:ZYnontrivial}
Suppose that the fiberwise Morse function $f$ is such that $f_a$ is minimal, i.e., $\#\Sigma_0(f_a)=1$ and $\#\Sigma_2(f_a)=1$. Suppose moreover that $\Phi_1(f)=0$. If $(\kappa_i,f_i,\xi_i)$ are sufficiently close to $(\kappa,f,\xi)$ and generic, then the element
\[ \sum_{p_1,p_2,p_3}\#\mathsf{D}_{p_1}(\xi_1)\cap\mathsf{D}_{p_2}(\xi_2)\cap\mathsf{D}_{p_3}(\xi_3)\,[p_1]\wedge [p_2]\wedge [p_3]\in \textstyle\bigwedge^3 H_2(M;\Q) \]
where the sum is over $p_1\in\Sigma_1((f_{1})_a)$, $p_2\in\Sigma_1((f_{2})_a)$, $p_3\in\Sigma_1((f_{3})_a)$, is well-defined and is a multiple of the dual to the triple cup product on $H^1(M;\Q)$. Hence $Z_{n,Y}^T(i)$ (or $Z_{n,Y}^T(i)_0$) is nontrivial if the triple cup product $\bigwedge^3 H^1(M;\Q)\to H^3(M;\Q)=\Q$ of the mapping torus of the surgery $\tau_i$ is nontrivial.
\end{Cor}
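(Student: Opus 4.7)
The plan is first to show that the chains $\mathsf{D}_{p_i}(\xi_i)$ are 2-cycles representing well-defined classes in $H_2(M;\Q)$, then to identify the displayed element with the Poincar\'e dual of the triple cup product, and finally to transport this identification into the surgery manifold to conclude nontriviality of $Z_{n,Y}^T(i)$.

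For the 2-cycle property, I would use that a minimal Morse function on a closed oriented surface has exactly $b_0,b_1,b_2$ critical points in each index, so the Morse chain complex of $f_a$ has ranks equal to the Betti numbers and both Morse differentials $\Phi_0(f_a)$ vanish identically. Combined with the hypothesis $\Phi_1(f)=0$, the formula $\partial^{\mathrm{AL}}p=-\Phi_0(f_a)(p)+\Phi_1(f)(p)$ gives $\partial^{\mathrm{AL}}p=0$ for every $p\in\Sigma(f_a)$. Proposition~\ref{prop:AL-homology} then realizes $\mathsf{D}_p(\xi)=\mathsf{D}_p^-(\xi)\cup\mathsf{D}_p^+(\xi)$ as a geometric 2-cycle in $M$ representing a class $[p]\in H_2(M;\Q)$ for each $p\in\Sigma_1(f_a)$. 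These hypotheses are $C^\infty$-open, so the perturbed triples $(\kappa_i,f_i,\xi_i)$ satisfy the same conclusion, and the class $[p_i]$ is independent of small generic perturbations by a standard continuation argument.

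For the identification with the triple cup product, for generic $(\xi_1,\xi_2,\xi_3)$ the three 2-cycles meet transversally in a finite signed set in the closed oriented 3-manifold $M$, and Poincar\'e duality gives
\[\#\bigl(\mathsf{D}_{p_1}(\xi_1)\cap\mathsf{D}_{p_2}(\xi_2)\cap\mathsf{D}_{p_3}(\xi_3)\bigr)=\bigl\langle \mathrm{PD}([p_1])\cup\mathrm{PD}([p_2])\cup\mathrm{PD}([p_3]),[M]\bigr\rangle.\]
Since cup products of degree-one classes anticommute, this number is totally antisymmetric in $(p_1,p_2,p_3)$, so when paired with the antisymmetric wedge $[p_1]\wedge[p_2]\wedge[p_3]$ the sum over ordered triples factors as $3!$ times the element of $\bigwedge^{3}H_2(M;\Q)$ corresponding to $\cup^{3}\colon\bigwedge^{3}H^1(M;\Q)\to\Q$ under Poincar\'e duality. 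Applying the same analysis inside the mapping torus $N_i$ associated to the surgery $\tau_i$, each element of $\acalM_{Y(x_j,x_k,x_\ell)}(\xi_{j\tau_i},\xi_{k\tau_i},\xi_{\ell\tau_i})$ is, by transversality, a triple intersection point of the descending manifold loci through $x_j,x_k,x_\ell$; these loci close up in $N_i$ to the analogs of the 2-cycles $\mathsf{D}_{x_\bullet}(\xi_{\bullet\tau_i})$ of the previous step. Hence the sum $\sum_{x_j,x_k,x_\ell}\#\acalM_{Y(x_j,x_k,x_\ell)}\,[x_j]\wedge[x_k]\wedge[x_\ell]$ in $\bigwedge^{3}H_2(N_i;\Q)$ realizes a nonzero multiple of the dual of $\cup^{3}$ on $H^1(N_i;\Q)$, which forces some coefficient of $Z_{n,Y}^T(i)$ to be nonzero whenever that cup product is nontrivial. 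The argument for $Z_{n,Y}^T(i)_0$ is identical because the classes $[x_j]$ with $x_j\in P^+$ alone (or $P^-$ alone) already span the homological subspace that the surgery can create.

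The main obstacle will be the last step: verifying that the local count $Z_{n,Y}^T(i)$, whose trivalent vertex is constrained to lie inside the surgery cylinder $\Sigma_{i\circ}'\times I_i$, really coincides with the global triple intersection count on $N_i$. This amounts to arranging the three gradients $\xi_j$ to agree (after arbitrarily small perturbation) outside the surgery region, so that the three descending loci are $C^0$-close there and carry no transverse triple intersections outside the cylinder---this is the same localization principle already used in the proof of Theorem~\ref{thm:surgery}.
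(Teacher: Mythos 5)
The paper states Corollary~\ref{cor:ZYnontrivial} immediately after the proof of Proposition~\ref{prop:AL-homology} and supplies no separate argument, so there is no ``paper's own proof'' to compare against line by line. Your reconstruction follows the three steps the statement evidently intends, and the key observations are correct: a minimal Morse function on a closed surface has the same number of index-$j$ critical points as $b_j(\Sigma)$, forcing $\Phi_0(f_a)=0$; together with $\Phi_1(f)=0$ this makes every $p\in\Sigma_1(f_a)$ a $\partial^{\mathrm{AL}}$-cycle, so $\mathsf{D}_p(\xi)$ carries a class $[p]\in H_2(M;\Q)$ by Proposition~\ref{prop:AL-homology}; and the triple intersection number of three transverse representatives of $H_2$-classes in a closed oriented 3-manifold is $\langle\mathrm{PD}[p_1]\cup\mathrm{PD}[p_2]\cup\mathrm{PD}[p_3],[M]\rangle$ by Poincar\'e duality. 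Both hypotheses are $C^\infty$-open, so they persist for the nearby $(\kappa_i,f_i,\xi_i)$, giving well-definedness.

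Two points deserve more care than your sketch supplies, although both are also glossed over in the corollary as stated. First, the step from ``the sum over ordered triples factors as $3!$ times \dots'' to ``is a multiple of the dual to the triple cup product'' implicitly uses that the classes $\{[p]:p\in\Sigma_1(f_a)\}$, identified across the three nearby gradients, behave like a basis of $H_2$ under the identification $H^1\cong H_2$; in fact under the hypotheses these classes span only the $2g$-dimensional ``fiber'' summand, and the fiber class $[\Sigma]$ (whose Poincar\'e dual is $[d\kappa]$) is missing. What the element actually recovers is the restriction of the triple cup form to $\mathrm{PD}$ of that $2g$-dimensional subspace, i.e.\ the ``fiber$\times$fiber$\times$fiber'' block, which is the part that a Torelli surgery can vary and is what the $Y$-graphs of $Z_{n,Y}^T(i)$ see. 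You should say so explicitly, since the full triple cup product of the mapping torus has additional $[d\kappa]\cup\alpha\cup\beta$ terms that the displayed element cannot detect. Second, the final paragraph correctly identifies the remaining obstacle --- that $Z_{n,Y}^T(i)$ by definition counts only $Y$-graphs whose trivalent vertex lies in $\Sigma_{i\circ}'\times I_i$, whereas the triple intersection argument is global in the mapping torus --- but it handles it only by appeal to a ``localization principle''; since the trivalent-vertex constraint in the definition of $Z_{n,Y}^T(i)$ is a definitional restriction rather than an output of the alternating-sum cancellation in Theorem~\ref{thm:surgery}, you would need to argue separately (e.g.\ by choosing the auxiliary gradients to agree outside the surgery cylinder) that the global triple intersection number has no contributions outside the cylinder. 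Neither point undermines the structure of the argument, but both should be made explicit to turn the sketch into a proof.
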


\begin{Exa}
Let $\Sigma_{g,k}$ denote an oriented surface of genus $g$ with $k$ boundary circles and let $\varphi_0:\Sigma_{1,1}\to \Sigma_{1,1}$ be a diffeomorphism such that $\varphi_0|_{\partial\Sigma_{1,1}}=\mathrm{id}$ and such that the induced map $\varphi_{0*}:H_1(\Sigma_{1,1};\Z)\to H_1(\Sigma_{1,1};\Z)$ is represented by the matrix $
A_0=\left(\begin{array}{cc}
  2 & 1\\
  1 & 1\\
\end{array}\right)=\left(\begin{array}{cc}
  1 & 1\\
  0 & 1\\
\end{array}\right)\left(\begin{array}{cc}
  1 & 0\\
  1 & 1\\
\end{array}\right)$. Since $A_0$ is a product of elementary matrices, $\varphi_0$ can be realized by 1-handle-slides in the standard handle presentation of $\Sigma_{1,1}$ in Figure~\ref{fig:spine-transverse} (b). By boundary connected sums between the 0-handles, we obtain a diffeomorphism $\varphi':\Sigma_{3,1}\to \Sigma_{3,1}$ such that the induced map on $H_1$ is represented by $A=A_0\oplus A_0\oplus A_0$. Let $\varphi:\Sigma_{3,0}\to \Sigma_{3,0}$ be an extension of $\varphi'$ by the identity. Let $M$ be the mapping torus of $\varphi$ and let $\kappa:M\to S^1$ be the projection. Since $\det(A-1)=-1$ is invertible in $\Z$, we have $H_1(M)=\Z$ (see e.g., \cite[Lemma~A.1]{Wa2}). Let $f:M\to \R$ be an oriented fiberwise Morse function that restricts to a minimal Morse function on a fiber, which exists. Then $f$ satisfies the assumption of Example~\ref{exa:borromean}. Let $I_1,I_2\subset S^1$ be small disjoint intervals and put $M_{I_j}=\kappa^{-1}(I_j)$. We identify $M_{I_j}$ with $\Sigma_{3,0}\times I_j$. Consider two disjoint $Y$-surgeries $T=\{\tau_1,\tau_2\}$ on $M_{I_1},M_{I_2}$ respectively as in Example~\ref{exa:borromean}, as follows. A $Y$-surgery can be considered as a replacement of $H_j\times I_j$ where $H_j$ is a (thickened) ribbon graph on $\Sigma_{3,0}$ of genus 0 with four boundary components. Assume that the spines of $H_1$ and of $H_2$ intersect transversally as in Figure~\ref{fig:spine-transverse} (a). 
\begin{figure}
\fig{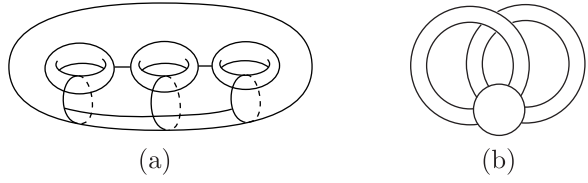}
\caption{(a) Spines of $H_1$ and $H_2$. (b) Standard handle presentation of $\Sigma_{1,1}$.}\label{fig:spine-transverse}
\end{figure}

From Corollary~\ref{cor:ZYnontrivial}, one sees that $Z_{1,Y}^T(1)_0$ and $Z_{1,Y}^T(2)_0$ are of the following forms:
\[ \begin{split}
	Z_{1,Y}^T(1)_0&=\delta\,Y(y_1',y_2',y_3')+\ve\,Y(x_1,x_2,x_3)+\mbox{(permutations)}\\
	Z_{1,Y}^T(2)_0&=\delta\,Y(w_1',w_2',w_3')+\ve\,Y(z_1,z_2,z_3)+\mbox{(permutations)}
\end{split}\]
where $\delta,\ve=\pm 1$ and $y_1',y_2',y_3',w_1',w_2',w_3'\in P^+$, $x_1,x_2,x_3,z_1,z_2,z_3\in P^-$ and ``permutations'' are the contributions of the corresponding triples of critical loci with the labels of the edges permuted. The coefficients in $Z_C^T(\ell)$ are given by matrix entries of 
\[ \begin{split}
  (1-tA)^{-1}&=\left(\begin{array}{cc}
    \frac{1-t}{d(t)} & \frac{t}{d(t)}\\
    \frac{t}{d(t)} & \frac{1-2t}{d(t)}
\end{array}\right)\oplus\left(\begin{array}{cc}
    \frac{1-t}{d(t)} & \frac{t}{d(t)}\\
    \frac{t}{d(t)} & \frac{1-2t}{d(t)}
\end{array}\right)\oplus\left(\begin{array}{cc}
    \frac{1-t}{d(t)} & \frac{t}{d(t)}\\
    \frac{t}{d(t)} & \frac{1-2t}{d(t)}
\end{array}\right)\\
tA(1-tA)^{-1}&=\left(\begin{array}{cc}
    \frac{t(2-t)}{d(t)} & \frac{t}{d(t)}\\
    \frac{t}{d(t)} & \frac{t(1-t)}{d(t)}
\end{array}\right)\oplus\left(\begin{array}{cc}
    \frac{t(2-t)}{d(t)} & \frac{t}{d(t)}\\
    \frac{t}{d(t)} & \frac{t(1-t)}{d(t)}
\end{array}\right)\oplus\left(\begin{array}{cc}
    \frac{t(2-t)}{d(t)} & \frac{t}{d(t)}\\
    \frac{t}{d(t)} & \frac{t(1-t)}{d(t)}
\end{array}\right)
\end{split}\]
where $d(t)=1-3t+t^2$. Hence for $T=\{\tau_1,\tau_2\}$, we have
\[ \begin{split}
	&\calQ(\beta[M,T])\\
	&=2!\,3!\,\delta\ve\,\Tr_\Theta\Bigl(\frac{1-t_1}{d(t_1)}\otimes\frac{1-t_2}{d(t_2)}\otimes\frac{1-t_3}{d(t_3)}+\frac{t_1(1-t_1)}{d(t_1)}\otimes\frac{t_2(1-t_2)}{d(t_2)}\otimes\frac{t_3(1-t_3)}{d(t_3)}\Bigr)\\
	&=24\,\delta\ve\,\Tr_\Theta\Bigl(\frac{1-t_1}{1-3t_1+t^2_1}\otimes\frac{1-t_2}{1-3t_2+t^2_2}\otimes\frac{1-t_3}{1-3t_3+t^2_3}\Bigr)
\end{split} \]
by Theorem~\ref{thm:surgery2}. In this way, one may find many nontrivial elements in $\calF_2/\calF_3$ and in $\wcalF_{2n}/\wcalF_{2n+1}$, where we must use $Z_{n,Y}^T(i)$ for $\wcalF_{2n}/\wcalF_{2n+1}$ in place of $Z_{1,Y}^T(i)_0$.
\qed
\end{Exa}

\mysection{Concluding remarks}{s:rem}

\subsection{Fatgraphs}
A {\it fatgraph} (or a ribbon graph) is a vertex-oriented graph. A fatgraph $G$ gives an oriented surface $F(G)$ with spine $G\subset F(G)$, which is a 2-dimensional handlebody consisting of 0- and 1-handles. A handle-slide in such a handlebody corresponds to a local move on a fatgraph. A sequence $\{G_i\to G_{i+1}\}_{i=0}^{k-1}$ of such local moves with $G_k=G_0$ gives rise to a (possibly unorientable) fiberwise Morse function for a surface bundle $M\to S^1$. 

It is known (e.g., \cite{Pe, MP, ABP, BKP}) that the mapping class group $\calM_{g,1}$ of punctured surface $\Sigma_{g,1}$ can be represented as the set of sequences of ``Whitehead moves'' $\{W_i:G_i\to G_{i+1}\}_{i=0}^{k-1}$ on (bordered) trivalent fatgraphs with $G_k=G_0$, modulo the relations called involutivity, commutativity and pentagon. A Whitehead move can be realized as a sequence of handle-slides. In this way, $\calM_{g,1}$ can be represented as the set of sequences of handle-slides on fatgraphs modulo the corresponding relations rewritten in terms of handle-slides. The relations can be realized by 2-parameter families of GMF's. It can be checked that the pentagon relation may not be realized by a concordance of oriented fiberwise Morse functions. Thus, to get an invariant of 3-manifolds, the method of this paper needs to be improved, although we do not know whether the pentagon relation changes the value of $\wZ_n$ or not.


\section*{\bf Acknowledgments.}
The author is supported by JSPS Grant-in-Aid for Young Scientists (B) 26800041.
\par\bigskip

\end{document}